\newcommand{\la}{\langle}
\newcommand{\ra}{\rangle}
\renewcommand{\Re}{\operatorname{Re}}
\renewcommand{\Im}{\operatorname{Im}}
\newcommand{\ds}{\displaystyle}
\newcommand{\sech}{\operatorname{sech}}
\newcommand{\defeq}{\stackrel{\rm{def}}{=}}
\renewenvironment{itemize}{\begin{list}{\labelitemi}{\leftmargin=1em}
}{\end{list}}
\newtheorem{theorem}{Theorem}
\newtheorem{proposition}[theorem]{Proposition}
\newtheorem{lemma}[theorem]{Lemma}
\newtheorem{corollary}[theorem]{Corollary}
\newtheorem{example}[theorem]{Example}
\theoremstyle{remark}
\newtheorem{remark}[theorem]{Remark}
\newcommand{\cR}{\mathbb{R}}
\newcommand{\cC}{\mathbb{C}}
\newcommand{\grad}{\nabla}
\numberwithin{equation}{section}
\numberwithin{theorem}{section}
\numberwithin{table}{section}
\numberwithin{figure}{section}
\title[Blow-up criteria for 3d cubic NLS]
{Blow-up criteria for the 3d cubic nonlinear Schr\"odinger equation}
\author{Justin Holmer}
\address{Brown University}
\author{Rodrigo Platte}
\address{University of Oxford}
\author{Svetlana Roudenko}
\address{Arizona State University}
\begin{document}

\begin{abstract}
We consider solutions $u$ to the 3d nonlinear Schr\"odinger equation
$i\partial_t u + \Delta u + |u|^2u=0$.  In particular, we are
interested in finding criteria on the initial data $u_0$ that
predict the asymptotic behavior of $u(t)$, e.g., whether $u(t)$
blows-up in finite time, exists globally in time but behaves like a
linear solution for large times (scatters), or exists globally in
time but does not scatter.  This question has been resolved (at
least for $H^1$ data) in \cite{HR2,DHR,DR,HR3} if $M[u]E[u]\leq
M[Q]E[Q]$, where $M[u]$ and $E[u]$ denote the mass and energy of
$u$, and $Q$ denotes the ground state solution to $-Q+\Delta Q +
|Q|^2Q=0$. Here we consider the complementary case
$M[u]E[u]>M[Q]E[Q]$.  In the first (analytical) part of the paper,
we present a result due to Lushnikov \cite{Lu95}, based on the
virial identity and the uncertainty principle, giving a sufficient
condition for blow-up. By replacing the uncertainty principle in his
argument with an interpolation-type inequality, we obtain a new
blow-up condition that in some cases improves upon Lushnikov's
condition. Our approach also allows for an adaptation to radial
infinite-variance initial data that has a conceptual interpretation:
for real-valued initial data, if a certain fraction of the mass is
contained within the ball of radius $M[u]$, then blow-up occurs.  We
also show analytically (if one takes the numerically computed value
of $\|Q\|_{\dot H^{1/2}}$) that there exist Gaussian initial data
$u_0$ with negative quadratic phase such that $\|u_0\|_{\dot
H^{1/2}} < \|Q\|_{\dot H^{1/2}}$ but the solution $u(t)$ blows-up.
In the second (numerical) part of the paper,  we examine several
different classes of initial data -- Gaussian, super-Gaussian,
off-centered Gaussian, and oscillatory Gaussian -- and for each
class give the theoretical predictions for scattering or blow-up
provided by the above theorems as well as the results of numerical
simulation.  On the basis of the numerical simulations, we formulate
several conjectures, among them that for \emph{real} initial data,
the quantity $\|Q\|_{\dot H^{1/2}}$ provides the threshold for
scattering.
\end{abstract}

\maketitle

\section{Introduction}

The nonlinear Schr\"odinger equation (NLS) or Gross-Pitaevskii equation is
\begin{equation}
 \label{E:NLSa}
i\partial_t u + \Delta u+ |u|^2u=0 \, ,
\end{equation}
with wave function $u=u(x,t)\in \mathbb{C}$. We consider $x\in
\mathbb{R}^n$ in dimensions $n=1,2,$ or $3$. The initial-value
problem is locally well-posed in $H^1$ (see Cazenave \cite{Caz-book}
for exposition and references therein).  In this now standard theory
obtained from the Strichartz estimates, initial data $u_0 \in H^1$
give rise to a unique solution $u(t)\in C([0,T];H^1)$ with the time
interval $[0,T]$ of existence specified in terms of $\|u_0\|_{H^1}$.
In some situations, an \emph{a priori} bound on $\|u(t)\|_{H^1}$ can
be deduced from conservation laws which implies the solution $u(t)$
exists globally in time. On the other hand, we say that a solution
$u(t)$ to NLS \emph{blows-up in finite time} $T^*$ provided
\begin{equation}
\label{E:blow-up}
\lim_{t\nearrow T^*} \|\nabla u(t)\|_{L^2} = +\infty\,.
\end{equation}
For $n=1$, all $H^1$ initial data yield global solutions, but large
classes of initial data leading to solutions blowing-up in finite
time are known for $n=2$ and $n=3$.  NLS arises as a model of
several physical phenomena.  We outline three important examples in
a supplement to this introduction (\S\ref{S:physics} below), and
emphasize that in each case the mathematical property of blow-up in
finite time is realistic and relevant.  It is therefore of interest
to determine mathematical conditions on the initial data that
guarantee the corresponding solution will blow-up in finite-time and
conditions that guarantee it will exist globally in time.  Moreover,
if we know the solution is global, it is natural to ask whether we
can predict the asymptotic ($t\to +\infty$) behavior of the
solution.  If the solution asymptotically approaches a solution of
the linear equation, we say it \emph{scatters}.  Nonlinear effects
can persist indefinitely, however; for example, leading to formation
of \emph{solitons} or \emph{long-range modulation of linear
solutions}.

Partial answers to the above mathematical problem are known, and we
will discuss separately the existing literature in the case of
dimensions $n=1,2,$ and $3$.  Afterward, we will state our new
findings in the $n=3$ case.

Before proceeding, we note that NLS satisfies conservation of mass
$M[u]$, momentum $P[u]$, and energy $E[u]$, where
$$
M[u] = \|u\|_{L^2}^2, \quad P[u]= \Im \int \bar u \nabla u \,,
$$
$$
E[u] = \frac12 \|\nabla u\|_{L^2}^2 - \frac14 \|u\|_{L^4}^4 \,.
$$
Also, NLS satisfies the scaling symmetry
\begin{equation}
\label{E:scaling}
u(x,t) \text{ solves NLS } \implies \lambda u(\lambda x, \lambda^2
t) \text{ solves NLS} .
\end{equation}
Consequently, the critical (scale-invariant) Sobolev space
$H^s(\mathbb{R}^n)$ is $s=\frac{n-2}{2}$. The NLS equation also
satisfies the Galilean invariance: For any $v\in \mathbb{R}^n$,
$$
u(x,t) \text{ solves NLS } \implies e^{ix\cdot
v}e^{-it|v|^2}u(x-2vt, t) \text{ solves NLS}\,,
$$
and thus, any solution can be transformed to one for which $P[u]=0$.
Let
$$
V[u](t) = \|xu(t)\|_{L_x^2}^2
$$
denote the variance.  Assuming $V[u](0)<\infty$, then the virial
identities (Vlasov-Petrishchev-Talanov \cite{VPT}, Zakharov
\cite{Zak72}, Glassey \cite{G})
\begin{equation}
\label{E:virial}
\begin{aligned}
&\partial_t V[u] = 4\Im \int x\cdot \nabla u \, \bar u \, dx \,,
&\partial_t^2 V[u] =  8nE[u] + (8-4n) %
\|\nabla u\|_{L_x^2}^2
\end{aligned}
\end{equation}
hold. Let $Q=Q(x)$ denote the real-valued, smooth, exponentially
decaying ground state solution to
\begin{equation}
\label{E:Q}
-Q+\Delta Q + Q^3=0 \,.
\end{equation}
Then $u(x,t) = e^{it}Q(x)$ solves NLS, and is called the
\emph{ground state soliton}.  The Pohozhaev identities are
\begin{equation}
 \label{E:Pohozhaev}
\|\nabla Q \|_{L^2}^2 = \frac{n}{4-n} \|Q\|_{L^2}^2 \,, \qquad
\|Q\|_{L^4}^4 = \frac{4}{4-n}\|Q\|_{L^2}^2\,.
\end{equation}
Weinstein \cite{W83} proved that the Gagliardo-Nirenberg inequality
\begin{equation}
 \label{E:Weinstein}
\|\phi\|_{L^4}^4 \leq c_{\text{GN}} \|\phi\|_{L^2}^{4-n} \|\nabla
\phi\|_{L^2}^n
\end{equation}
is saturated by $\phi=Q$, i.e.,
$$
c_{\text{GN}} = \frac{\|Q\|_{L^4}^4}{\|Q\|_{L^2}^{4-n}\|\nabla
Q\|_{L^2}^n} \,
$$
is the sharp constant.

\smallskip

\noindent\emph{2d case}.  Much of the mathematically rigorous
literature has been devoted to the 2d case, of particular relevance
to the optics model (item 1 in \S\ref{S:physics}), and has the
special mathematical property of being $L^2$-critical. The energy
$E[u]$ conservation combined with the Weinstein inequality
\eqref{E:Weinstein} implies that if $\|u_0\|_{L^2}<\|Q\|_{L^2}$, an
$H^1$ solution is global. This result is in fact sharp, in the
following sense.  The $L^2$ scale-invariance of the 2d equation
allows for an additional symmetry, the pseudo-conformal
transformation
\begin{equation}
 \label{E:pc}
u(x,t) \text{ solves 2d-NLS} \quad \implies \quad \tilde u(x,t) =
\frac{1}{t} \, e^{\frac{i|x|^2}{4t}}\bar u\Big(\frac{x}{t},
\;\frac{1}{t}\Big) \text{ solves 2d-NLS} .
\end{equation}
This gives rise to an explicit family of blow-up solutions
$$
u_T(x,t) = \frac{1}{(T-t)} \, e^{i/(T-t)} e^{i|x|^2/(T-t)}Q\left(
\frac{x}{T-t} \right)
$$
obtained by the pseudoconformal transformation, time translation,
and scaling.  They blow-up at the origin at time $T>0$ (and $T$ can
be taken arbitrarily small), but $\|u_T\|_{L^2} = \|Q\|_{L^2}$. Note
that they have initial data $(u_T)_0(x) = e^{i|x|^2/T}Q(x/T)/T$,
indicating that the inclusion of a quadratic phase prefactor can
create finite-time blow-up. Moreover, it was observed by
Vlasov-Petrishchev-Talanov \cite{VPT}, Zakharov \cite{Zak72} and
Glassey \cite{G} that if the initial data has finite variance
$\|xu_0\|_{L^2}<\infty$ and $E[u]<0$ (which implies by
\eqref{E:Weinstein} that $\|u_0\|_{L^2}\geq \|Q\|_{L^2}$), then the
solution $u(t)$ blows-up in finite time.  Blow-up solutions with
$E[u]>0$ exist and global solutions with $E[u]>0$ exist.  If
$E[u]>0$, then a sufficient condition for blow-up can be deduced
from the virial identity (see \cite{VPT},
\cite{Zak72})\footnote{Blow-up solutions are also possible when
$E=0$ provided $V_t(0) <0$, for a general review refer to
\cite{SS}.}:
\begin{equation}
\label{E:2dblow-up}
V_t(0)< - \sqrt{ 16EV(0)} \,.
\end{equation}

\smallskip
\noindent\emph{1d case}. The 1d case is $L^2$ subcritical; energy
conservation and \eqref{E:Weinstein} prove that solutions never
blow-up in finite time.  One can still ask if there is a
quantitative threshold for the formation of solitons.  Such a
threshold must be expressed in terms of a scale-invariant quantity,
and the $L^1$ norm is a natural candidate.  We note that soliton
solutions
$$
u(x,t) = e^{it}Q(x) \,, \quad Q(x)=\sqrt 2 \sech x
$$
have $\|u(t)\|_{L^1}= \|Q\|_{L^1}=\sqrt 2\pi$ (as do rescalings and
Galilean shifts of this solution).  As the equation is completely
integrable (see Zakharov-Shabat \cite{ZS}), one has available the
tools of inverse scattering theory (IST).  IST has been applied by
Klaus-Shaw \cite{KS} to show that if $\|u_0\|_{L^1}< \frac12
\|Q\|_{L^1}$, then no solitons form.  See Holmer-Marzuola-Zworski
\cite{HMZ}, Apx. B for a calculation showing that this is sharp --
for initial data $u_0(x)=\alpha Q(x)$ with $\alpha>\frac12$, a
soliton emerges in the $t\to +\infty$ asymptotic resolution.  We
remark that although no solitons appear if $\|u_0\|_{L^1}< \frac12
\|Q\|_{L^1}$, such solutions do not \emph{scatter}, i.e., they do
not approach a solution to the linear equation as $t\to +\infty$ --
see Barab \cite{Barab}.  In fact, there are long-range effects and
one conjectures \emph{modified scattering} -- see Hayashi-Naumkin
\cite{HN} for some results in this direction for small intial
data.\footnote{\cite{HN} does not cover the full range
$\|u_0\|_{L^1}<\frac12\|Q\|_{L^1}$, and, in fact, the smallness
condition is in terms of a stronger norm.} The Hayashi-Naumkin
paper, in fact, treats a more general equation and does not rely on
IST; presumably IST could be applied to prove modified scattering
for $\|u_0\|_{L^1}<\frac12\|Q\|_{L^1}$ for generic Schwartz $u_0$,
although we are not aware of a reference.

\smallskip
\noindent\emph{3d case}.  We have previously studied the 3d case of
NLS, which is $L^2$ supercritical, in Holmer-Roudenko \cite{HR1,
HR2, HR3}, Duyckaerts-Holmer-Roudenko \cite{DHR}, and
Duyckaerts-Roudenko \cite{DR}. Scattering and blow-up criteria are
most naturally expressed in terms of scale invariant quantities, and
natural candidates are the $L^3$ norm and the $\dot H^{1/2}$ norm.
We argue below that the $L^3$ norm is completely inadequate, and
while the $\dot H^{1/2}$ norm is a more reasonable choice, it too
appears deficient.  In \cite{HR2,DHR}, we work instead with two
scale-invariant quantities: $M[u]E[u]$ and
\begin{equation}
 \label{E:eta-def}
\eta(t) \defeq \frac{\|u(t)\|_{L^2}\|\nabla
u(t)\|_{L^2}}{\|Q\|_{L^2}\|\nabla Q\|_{L^2}} \,.
\end{equation}
By the Weinstein inequality \eqref{E:Weinstein} and the Pohozhaev
identities \eqref{E:Pohozhaev} we have
\begin{equation}
\label{E:energy1} 3\eta(t)^2 \geq \frac{M[u] E[u]}{M[Q]E[Q]} \geq
3\eta(t)^2-2\eta(t)^3 .
\end{equation}
This results in two ``forbidden regions'' in the $M[u]E[u]/M[Q]E[Q]$
versus $\eta^2$ phase-plane -- see the depiction in Figure
\ref{F:dichotomy}.   Note that since $M[u]$ and $E[u]$ are
conserved, all time evolution in Figure \ref{F:dichotomy} occurs
along horizontal lines. In what follows for brevity and simplicity
we assume $P[u] = 0$ which can be obtained via Galilean transform.
The most general case would follow as it is explained in Appendix B
of \cite{HR3}.

\begin{theorem}[Duyckaerts-Holmer-Roudenko \cite{DHR}, Holmer-Roudenko {\cite{HR2,HR3}}]
\label{T:DHR}
Suppose that $u_0\in H^1$ and $M[u]E[u]<M[Q]E[Q]$.
\begin{enumerate}
\item
If $\eta(0)<1$, then $u(t)$ is globally well-posed and, in fact,
scatters in both time directions.
\item
If $\eta(0)>1$ and either $u_0$ has finite variance or $u_0$ is
radial, then $u(t)$ blows-up in finite positive time and finite
negative time.
\item
If $\eta(0)>1$, then either $u(t)$ blows-up in finite forward time
or there exists a sequence $t_n\nearrow +\infty$ such that $\|\nabla
u(t_n)\|_{L^2} = \infty$.  A similar statement for negative time
holds.
\end{enumerate}
\end{theorem}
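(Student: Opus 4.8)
The plan is to use the energy inequality \eqref{E:energy1} as a trapping mechanism and then split into the global/scattering and blow-up regimes. Set $g(x)\defeq 3x^2-2x^3$ and observe that $g$ increases on $[0,1]$ to its maximum $g(1)=1$ and decreases thereafter, so for any $c<1$ the sublevel set $\{x\ge 0:g(x)\le c\}$ consists of two intervals $[0,\eta_-]\cup[\eta_+,\infty)$ with $0<\eta_-<1<\eta_+$. Writing $c\defeq M[u]E[u]/M[Q]E[Q]<1$, the lower bound in \eqref{E:energy1} is exactly $g(\eta(t))\le c$, so $\eta(t)$ is barred from the gap $(\eta_-,\eta_+)$ for every $t$ in the maximal existence interval. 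Because $\eta$ is continuous in $t$ as long as $u(t)\in H^1$, it cannot jump across the gap: if $\eta(0)<1$ then $\eta(t)\le\eta_-<1$ for all $t$, while if $\eta(0)>1$ then $\eta(t)\ge\eta_+>1$ for all $t$. This continuity-trapping drives all three parts.

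For part (1), the bound $\eta(t)\le\eta_-<1$ together with conservation of $M[u]=\|u\|_{L^2}^2$ yields an a priori bound on $\|\nabla u(t)\|_{L^2}$; by the blow-up characterization \eqref{E:blow-up} and the local theory the solution is global. Scattering is the genuinely hard step, and I would carry it out by the Kenig--Merle concentration-compactness/rigidity program: were scattering to fail at the threshold, a profile decomposition would extract a minimal non-scattering critical element whose trajectory is precompact in $H^1$ modulo symmetries, and a localized virial/Morawetz estimate for this compact object would force it to vanish---a contradiction. The strict inequalities $M[u]E[u]<M[Q]E[Q]$ and $\eta<1$ are what keep the analysis strictly below the soliton threshold.

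For the blow-up in part (2) with finite variance I would use the $n=3$ virial identity \eqref{E:virial}, namely $\partial_t^2V=24E[u]-4\|\nabla u\|_{L^2}^2$. Multiplying by the conserved mass $M[u]$ and using Pohozhaev \eqref{E:Pohozhaev} to evaluate $\|Q\|_{L^2}^2\|\nabla Q\|_{L^2}^2=6\,M[Q]E[Q]$, the right-hand side becomes $24\,M[Q]E[Q]\,(c-\eta(t)^2)$. The trapping $\eta(t)^2\ge\eta_+^2>1>c$ then bounds this above by the negative constant $24\,M[Q]E[Q](c-\eta_+^2)$, so $V$ is a nonnegative function with uniformly negative second derivative and must reach zero in finite time; this is impossible unless $\|\nabla u(t)\|_{L^2}$ has already diverged, giving finite-time blow-up (the same computation run backward handles negative time). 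For radial, possibly infinite-variance, data I would replace $V$ by a truncated variance $\int\chi(|x|)\,|u|^2$ with $\chi(r)=r^2$ near the origin and $\chi$ flattened for large $r$; the cutoff errors are controlled by the radial Strauss bound $|u(x)|\lesssim |x|^{-1}\|u\|_{L^2}^{1/2}\|\nabla u\|_{L^2}^{1/2}$, and choosing the truncation radius large keeps the localized second derivative negative.

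Part (3)---non-radial, infinite-variance data---is where I expect the main obstacle, since neither the exact virial identity nor radial decay is available. My plan is again concentration-compactness: assuming the forward solution were global with $\sup_{t\ge0}\|\nabla u(t)\|_{L^2}<\infty$, the bounded kinetic energy and the lower bound $\eta(t)\ge\eta_+>1$ would produce a precompact (critical-element) trajectory to which a localized virial computation does apply, contradicting $\eta>1$; excluding this leaves either finite-time blow-up or a sequence $t_n\nearrow+\infty$ with $\|\nabla u(t_n)\|_{L^2}\to\infty$. The delicate points, and the crux of \cite{DHR}, are producing the compactness without a threshold gap on the $\eta>1$ side and justifying the localized virial estimate for the critical element.
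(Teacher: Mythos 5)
This theorem is stated in the paper as a citation of \cite{DHR,HR2,HR3}; the paper itself contains no proof of it, so the only meaningful comparison is with the strategy of those cited works, and your sketch reproduces that strategy correctly: trapping of $\eta(t)$ away from $1$ via the lower bound $3\eta^2-2\eta^3 \leq M[u]E[u]/M[Q]E[Q]$ in \eqref{E:energy1} and continuity, an a priori $H^1$ bound plus Kenig--Merle concentration-compactness/rigidity for scattering in part (1), Glassey's convexity argument for part (2) (your identity $\|Q\|_{L^2}^2\|\nabla Q\|_{L^2}^2=6M[Q]E[Q]$ and the resulting bound $M\,\partial_t^2 V \leq 24M[Q]E[Q](c-\eta_+^2)<0$ check out against the Pohozhaev identities \eqref{E:Pohozhaev}), the localized virial with the radial Strauss bound for infinite-variance radial data, and the compactness-plus-localized-virial dichotomy of \cite{HR3} for part (3). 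In short, your proposal is correct as a blueprint and takes essentially the same route as the original proofs being cited.
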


\begin{figure}
\includegraphics[width=400pt]{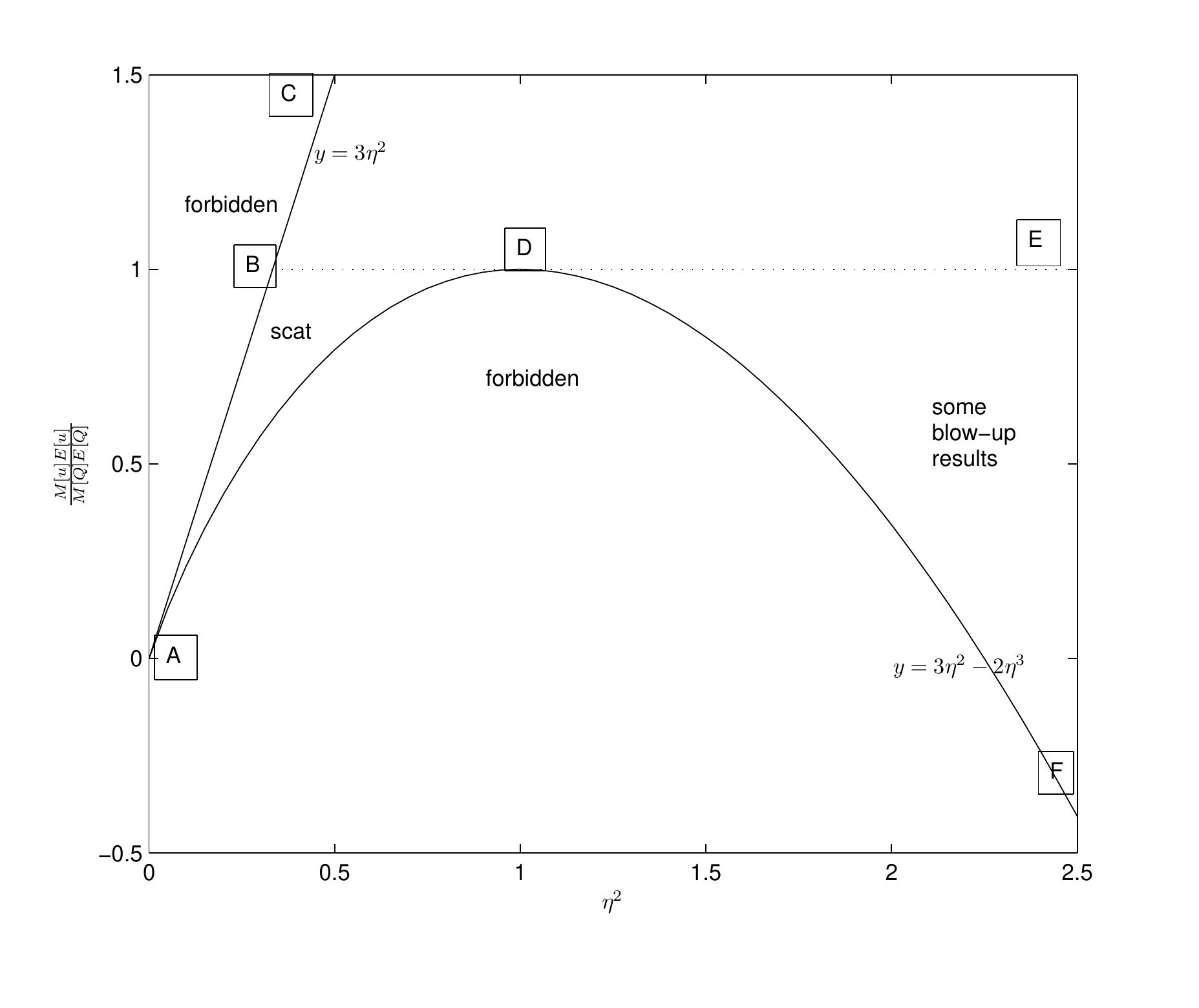}
\caption{A plot of $M[u]E[u]/M[Q]E[Q]$ versus $\eta^2$, where $\eta$
is defined by \eqref{E:eta-def}.  The area to the left of line ABC
and inside region ADF are excluded by \eqref{E:energy1}.  The region
inside ABD corresponds to case (1) of  Theorem \ref{T:DHR}
(solutions scatter).  The region EDF corresponds to case (2) of
Theorem \ref{T:DHR} (solutions blow-up in finite time).  Behavior of
solutions on the dotted line (mass-energy threshold line) is given
by Theorem \ref{T:DR}.}
 \label{F:dichotomy}
\end{figure}

It is a straightforward consequence of the linear decay estimate
that scattering solutions satisfy
$$
\lim_{t\nearrow +\infty} \|u(t)\|_{L^p} =0 \,, \qquad 2<p\leq 6 .
$$
It follows by using the $p=4$ case and the Pohozhaev identities
\eqref{E:Pohozhaev} that
$$
\lim_{t\to +\infty} \eta(t)^2 =  \frac{M[u]E[u]}{3M[Q]E[Q]} \,.
$$
That is, in Figure \ref{F:dichotomy}, a scattering solution has
$\eta(t)^2$ asymptotically approaching boundary line ABC.  On the
other hand, since blow-up solutions satisfy \eqref{E:blow-up}, such
solutions go off to right (along a horizontal line) in Figure
\ref{F:dichotomy}. We note that Merle-Rapha\"el \cite{MR}
strengthened \eqref{E:blow-up}:  they proved that if $u(t)$ blows-up
in finite forward time $T^*>0$, then
$$
\lim_{t \nearrow T^*} \|u(t)\|_{L^3} = +\infty \,.
$$

The blow up for finite variance as in Theorem \ref{T:DHR}, part (2),
has previously been obtained by Kuznetsov et al. in \cite{KRRT}.

The results of Duyckaerts-Roudenko {\cite{DR}} are contained in the
next two theorems.  First, they establish the existence of special
solutions (besides $e^{it}Q$) at the critical mass-energy threshold.
\begin{theorem}[Duyckaerts-Roudenko {\cite{DR}}]
 \label{T:DR-existence}
There exist two radial solutions $Q^+$ and $Q^-$ of NLS with initial
conditions $Q^{\pm}_0$ such that $\ds Q^{\pm}_0\in \cap_{s >0}
H^s(\mathbb{R}^3)$ and
\begin{enumerate}
\item \label{Q+Q-}
$\ds M[Q^+]=M[Q^-]=M[Q],\; E[Q^+]=E[Q^-]=E[Q]$, $[0,+\infty)$ is in
the (time) domain of definition of $Q^{\pm}$ and there exists
$e_0>0$ such that
$$
\forall t\geq 0,\quad \left\|Q^{\pm}(t)-e^{it}Q\right\|_{H^1}\leq
Ce^{-e_0t},
$$
\item
$\ds \|\nabla Q^-_0\|_2<\|\nabla Q\|_2$, $Q^-$ is globally defined
and scatters for negative time,
\item
$\ds \|\nabla Q^+_0\|_2>\|\nabla Q\|_2$, and the negative time of
existence of $Q^+$ is finite.
\end{enumerate}
\end{theorem}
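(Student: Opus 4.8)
The plan is to construct $Q^{\pm}$ as the two branches of the one-dimensional stable manifold of the soliton $e^{it}Q$, in the spirit of the threshold-solution constructions of Duyckaerts--Merle. Writing $u(x,t) = e^{it}\big(Q(x) + w(x,t)\big)$ and substituting into \eqref{E:NLSa}, and using $-Q + \Delta Q + Q^3 = 0$ to cancel the zeroth-order terms, I would derive for $w = w_1 + i w_2$ the perturbation system
$$\partial_t \begin{pmatrix} w_1 \\ w_2 \end{pmatrix} = \mathcal{L} \begin{pmatrix} w_1 \\ w_2 \end{pmatrix} + R(w), \qquad \mathcal{L} = \begin{pmatrix} 0 & L_- \\ -L_+ & 0 \end{pmatrix},$$
where $L_+ = -\Delta + 1 - 3Q^2$, $L_- = -\Delta + 1 - Q^2$, and $R(w)$ collects the terms quadratic and cubic in $w$. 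Since the data is radial, the translation directions drop out and the only continuous symmetry acting is the phase rotation $u \mapsto e^{i\theta}u$, whose infinitesimal generator corresponds to $(w_1,w_2)=(0,Q)$; indeed $L_-Q=0$, so this lies in the null space of $\mathcal{L}$, together with an associated generalized eigenvector coming from scaling.

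The technical heart is the spectral analysis of $\mathcal{L}$ on the radial subspace. Using the non-degeneracy of the ground state and the $L^2$-supercriticality of the equation, I would show that $\mathcal{L}$ has exactly one positive eigenvalue $e_0 > 0$ and one negative eigenvalue $-e_0$, with real eigenfunctions $\mathcal{Y}_{\pm}$ that decay exponentially (the decay via Agmon-type estimates), while the remainder of the spectrum lies on the imaginary axis. The instability count here reflects the sign of the Vakhitov--Kolokolov / Grillakis--Shatah--Strauss criterion: for the rescaled ground states $Q_\lambda(x)=\lambda Q(\lambda x)$ one has $M[Q_\lambda]=\lambda^{-1}M[Q]$, so $\partial_\lambda M[Q_\lambda]<0$, the unstable sign in the supercritical regime. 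A spectral gap around the null space together with the single stable eigenvalue $-e_0$ is exactly what forces the stable manifold to be one-dimensional.

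With this spectral picture I would construct, for each small $a \in \mathbb{R}$, a solution $w^a$ prescribed to behave like $a\,e^{-e_0 t}\mathcal{Y}_-$ as $t \to +\infty$. This is obtained by solving the Duhamel integral equation backward from $t = +\infty$, projecting off the unstable and null directions and introducing a modulated phase to absorb the phase symmetry, and then running a contraction mapping in the weighted space $\{\, \sup_{t \ge t_0} e^{e_0 t}\|w(t)\|_{H^1} < \infty \,\}$; smallness of $w$ for large $t$ makes the nonlinearity contractive. Rescaling in $a$ reduces the family to the representatives $a = \pm 1$, which after globalizing backward in time give $Q^{\pm}$. Property (1) then follows immediately: the exponential bound is built into the construction, and since $\|w^{\pm}(t)\|_{H^1}\to 0$ while $M$ and $E$ are conserved and continuous on $H^1$, letting $t \to +\infty$ forces $M[Q^{\pm}] = M[Q]$ and $E[Q^{\pm}] = E[Q]$.

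The remaining task is to separate the two branches and identify their global dynamics, and this is where I expect the main obstacle. Tracking how the sign of $a$ tilts the leading perturbation $a\,e^{-e_0 t}\mathcal{Y}_-$ should show that this sign determines the sign of $\|\nabla Q^{\pm}_0\|_2 - \|\nabla Q\|_2$, yielding the gradient inequalities in (2) and (3). The genuinely delicate point is that $Q^{\pm}$ live \emph{exactly} on the threshold line $M[u]E[u] = M[Q]E[Q]$, so Theorem~\ref{T:DHR} does not apply directly. Establishing that $Q^-$ is global and scatters in negative time while $Q^+$ blows up in finite negative time requires a threshold version of the concentration--compactness/rigidity argument (respectively a radial virial argument), derived as a limiting case of the strictly sub-threshold theory. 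Handling this borderline dynamics, together with pinning down the exact eigenvalue count and spectral gap in the linear step, is the crux of the proof.
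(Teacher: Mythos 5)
This statement is quoted in the paper from Duyckaerts--Roudenko \cite{DR} and is not proved anywhere in the paper itself, so the only meaningful comparison is with the proof in that reference. Your sketch --- linearizing as $u=e^{it}(Q+w)$ to get the system with $L_\pm$, establishing the single pair of real eigenvalues $\pm e_0$ (with the rest of the spectrum imaginary, by the supercritical Vakhitov--Kolokolov sign), building the solutions by a backward-in-time contraction in an exponentially weighted $H^1$ space with $a=\pm 1$ normalized by time translation, and then separating the two branches via threshold rigidity/concentration-compactness and a virial argument --- is essentially the strategy actually used in \cite{DR} (adapted from Duyckaerts--Merle), and at the level of detail you give it is correct, with the genuinely hard steps (spectral gap, threshold rigidity) correctly identified as such.
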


Next, they characterize all solutions at the critical mass-energy
level as follows:

\begin{theorem}[Duyckaerts-Roudenko {\cite{DR}}]
\label{T:DR}
Let $u$ be a solution of NLS satisfying $M[u]E[u]=M[Q]E[Q]$.
\begin{enumerate}
\item \label{case_sub}
If $\eta(0)<1$, then
either $u$ scatters or $u=Q^-$ up to the symmetries.
\item
 \label{case_critical}
If $\eta(0)=1$, then
$u=e^{it}Q$ up to the symmetries.
\item
 \label{case_super}
If $\eta(0)>1$, and $u_0$
is radial or of finite variance, then either the interval of
existence of $u$ is of finite length or $u=Q^+$ up to the
symmetries.
\end{enumerate}
\end{theorem}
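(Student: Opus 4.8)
The plan is to dispose of case (2) by a direct variational argument and to treat cases (1) and (3) by the concentration-compactness-plus-rigidity method, supplemented by a spectral analysis of the flow linearized about $e^{it}Q$. For case (2), observe that at the threshold $M[u]E[u]=M[Q]E[Q]$ the right-hand inequality in \eqref{E:energy1} reads $1\ge 3\eta(t)^2-2\eta(t)^3$, and the map $\eta\mapsto 3\eta^2-2\eta^3$ attains its maximum value $1$ exactly at $\eta=1$. Thus $\eta(0)=1$ forces equality in this bound, hence equality in the Weinstein inequality \eqref{E:Weinstein}; since $Q$ is its unique optimizer up to scaling, phase, and translation, $u_0$ lies in the ground-state orbit and $u(t)=e^{it}Q$ up to the symmetries. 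The same computation yields a structural fact I would use throughout: if a threshold solution has $\eta(t_0)=1$ at any time, it is the soliton, so for any threshold solution that is \emph{not} a symmetry image of $e^{it}Q$ the quantity $\eta(t)-1$ never vanishes and therefore has constant sign. This partitions the non-soliton threshold solutions into the family with $\eta<1$ (case (1)) and the family with $\eta>1$ (case (3)).

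For the two remaining cases I would first establish, still variationally, that at the threshold ``$\eta(t)$ near $1$'' is \emph{equivalent} to ``$u(t)$ near the ground-state orbit in $H^1$,'' so that control of $\eta$ furnishes a modulation decomposition $u(t)=e^{i\theta(t)}T_{\lambda(t),x(t)}Q+w(t)$, where $T_{\lambda,x}$ denotes the action of the scaling and translation symmetries and $w(t)$ is small in $H^1$. Next I would analyze the linearization of NLS about $Q$, governed by the pair of operators $\mathcal{L}_+=-\Delta+1-3Q^2$ and $\mathcal{L}_-=-\Delta+1-Q^2$ acting on the real and imaginary parts of the perturbation. The relevant facts are: the generalized null space of the linearized generator is spanned by the symmetry directions; because the problem is $L^2$-supercritical the generator has exactly one pair of real eigenvalues $\pm e_0$ with $e_0>0$ (the soliton instability, and precisely the rate in Theorem \ref{T:DR-existence}); and the rest of the spectrum is dispersive, with a coercive ``good'' quadratic form on the orthogonal complement of the symmetry modes and the $\pm e_0$ modes.

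With this in hand I would argue by contradiction. Suppose the conclusion fails: in case (1) that $u$ neither scatters nor equals $Q^-$, or in case (3) that $u$ is globally defined but is not $Q^+$. A Kenig--Merle concentration-compactness reduction (profile decomposition together with the perturbation theory of the $H^1$ Cauchy problem) then produces a threshold solution, with the same sign of $\eta-1$, whose trajectory is precompact in $H^1$ modulo the symmetry group. For such a compact-trajectory solution I would run the rigidity argument: the compactness keeps $\eta$ in a bounded range, the variational equivalence places the solution near the orbit, and the modulation analysis decomposes $w$ into its unstable coefficient $a_+$, stable coefficient $a_-$, and a dispersive remainder with $\dot a_\pm=\pm e_0 a_\pm+O(\|w\|_{H^1}^2)$. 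A convexity inequality for a distance function $d(t)\sim\|w(t)\|_{H^1}$, together with a (localized, in the radial or finite-variance setting) virial identity \eqref{E:virial}, then forces $d(t)\to0$ and exponential convergence of the solution to $e^{it}Q$. By the uniqueness of solutions with this property --- the stable and unstable manifolds near the soliton being one-dimensional, the companion statement to the construction in Theorem \ref{T:DR-existence} --- the solution must equal $Q^-$ when $\eta<1$ and $Q^+$ when $\eta>1$, contradicting the assumption. Hence in case (1) $u$ scatters or is $Q^-$, and in case (3) $u$ has a finite interval of existence or is $Q^+$.

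The hardest part will be the rigidity step. It requires the complete spectral picture of the linearized operator --- in particular the absence of embedded eigenvalues and the coercivity of the good quadratic form --- carried out uniformly enough that the modulation estimates close and the convexity inequality for $d(t)$ can be integrated. In case (3) the radial or finite-variance hypothesis is exactly what makes the virial identity \eqref{E:virial} available for the rigidity, which is why it appears; in case (1), where the scattering alternative must be handled for general (non-radial, infinite-variance) data, the concentration-compactness reduction and the accompanying profile-decomposition and perturbation estimates are the principal technical burden, and verifying them is where I expect most of the work to lie.
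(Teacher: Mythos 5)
You should first note a mismatch with the task itself: the paper contains \emph{no proof} of Theorem \ref{T:DR}. It is stated as an imported result of Duyckaerts--Roudenko \cite{DR}, so there is no in-paper argument to compare against; your proposal can only be judged against the proof in the cited work. Measured that way, your treatment of cases (1) and (2) is a faithful reconstruction of the actual strategy. The variational argument for case (2) -- equality at $\eta=1$ in the right-hand inequality of \eqref{E:energy1} forces equality in Weinstein's inequality \eqref{E:Weinstein}, hence membership in the ground-state orbit -- is exactly how \cite{DR} disposes of that case, and the corollary you draw from it (for a non-soliton threshold solution, $\eta(t)-1$ never vanishes and so has constant sign) is likewise a genuine lemma of that paper. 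For case (1), the chain ``non-scattering threshold solution $\Rightarrow$ precompact trajectory modulo symmetries (Kenig--Merle profile decomposition, using scattering strictly below the threshold) $\Rightarrow$ modulation near the orbit, spectral analysis of $\mathcal{L}_\pm$ with the single pair of real eigenvalues $\pm e_0$ $\Rightarrow$ exponential convergence to $e^{it}Q$ $\Rightarrow$ $u=Q^-$ by uniqueness of the one-dimensional stable manifold'' is the correct architecture, including the role of Theorem \ref{T:DR-existence}.

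The genuine gap is your case (3). You propose to feed it through the same concentration-compactness reduction, but that machinery is built on a scattering-vs-nonscattering dichotomy: minimality of a non-scattering solution is what yields precompactness of the trajectory. In case (3) the alternative to $u=Q^+$ is \emph{finite-time blow-up}, not failure of scattering, and a solution that is merely assumed global with $\eta>1$ carries no minimality structure from which compactness could be extracted; your step ``produces a threshold solution whose trajectory is precompact'' is unjustified there. The proof in \cite{DR} avoids compactness entirely in this case: at the threshold, $\eta(t)>1$ together with the Pohozhaev identities makes the virial identity \eqref{E:virial} give $V''(t)<0$ strictly; positivity and concavity of $V$ on $[0,\infty)$ then force $V'\geq 0$ and $\int_0^\infty(-V'')\,dt<\infty$, so $\|\nabla u(t_n)\|_{L^2}^2 \to \|Q\|_{L^2}^2\|\nabla Q\|_{L^2}^2/M$ along a sequence $t_n\to\infty$. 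That places $u(t_n)$ near the soliton orbit by the variational characterization, and only then do the modulation and convexity arguments (and the uniqueness statement) take over to give $u=Q^+$. This is precisely where the radial (localized virial) or finite-variance hypothesis enters -- not, as in your sketch, as an ingredient inside a rigidity argument for an already-compact solution.
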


A recent result of Beceanu \cite{Bec} on \eqref{E:NLSa} states that
near the ground state soliton (and its Galilean, scaling and phase
transformations) there exists a real analytic (center-stable)
manifold in $\dot{H}^{1/2}$ such that any initial data taken from it
will produce a global in time solution decoupling into a moving
soliton and a dispersive term scattering in $\dot{H}^{1/2}$.

In part I of this paper, we provide some alternate criteria for
blow-up in the spirit of Lushnikov \cite{Lu95}.  First, we state his
result, adapted to our notation.  Due to the complexity of the
formulas, we will write $M=M[u]$, $E=E[u]$, etc.  For simplicity we
restrict to the case $E>0$, since $E\leq 0$ is comparately well
understood from Theorem \ref{T:DHR}.

\begin{theorem}[adapted from Lushnikov {\cite{Lu95}}]
\label{T:Lushnikov}
Suppose that $u_0\in H^1$ and $\|xu_0\|_{L^2}<\infty$.
The following is a sufficient condition for blow-up in finite time:
\begin{equation}
 \label{E:L}
\frac{V_t(0)}{M} < 2\sqrt 3 \; g\left( \frac{8E V(0)}{3M^2} \right),
\end{equation}
where
\begin{equation}
\label{E:g-def}
g(\omega) =
\begin{cases}
\sqrt{ \frac{2}{\omega^{1/2}}+\omega-3} & \text{if }0<\omega\leq 1 \\
-\sqrt{ \frac{2}{\omega^{1/2}}+\omega-3} & \text{if }\omega \geq 1,
\end{cases}
\end{equation}
which is graphed in Figure \ref{F:g-graph}.
\end{theorem}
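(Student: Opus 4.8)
The plan is to convert the conservation laws and the virial identity into an autonomous differential inequality for the variance $V(t)$, and then to read off the blow-up criterion as the statement that the initial data lies below the stable manifold (separatrix) of an associated planar dynamical system. First I would record the two components of the dilation integral. Integration by parts (the Pohozhaev/dilation identity) gives $\Re\int \bar u\, x\cdot\nabla u\,dx = -\tfrac32 M$, while the virial identity \eqref{E:virial} gives $\Im\int \bar u\, x\cdot\nabla u\,dx = \tfrac14 V_t$. Hence
\[
\int \bar u\, x\cdot\nabla u\,dx = -\tfrac32 M + \tfrac{i}{4}V_t ,
\]
and applying Cauchy--Schwarz with $\|xu\|_{L^2}^2 = V$ yields the (sharpened) uncertainty principle
\[
\tfrac94 M^2 + \tfrac{1}{16}V_t^2 \le V\,\|\nabla u\|_{L^2}^2 .
\]
Substituting the resulting lower bound for $\|\nabla u\|_{L^2}^2$ into the $n=3$ virial identity $V_{tt}=24E-4\|\nabla u\|_{L^2}^2$ produces the closed differential inequality
\[
V_{tt} \le 24E - \frac{9M^2}{V} - \frac{V_t^2}{4V}.
\]

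Next I would nondimensionalize to expose the function $g$. Setting $\omega = \tfrac{8EV}{3M^2}$ and rescaling time by $s=\tfrac{8E}{M}t$ (writing $\dot{}=d/ds$), the inequality becomes the scale-free form
\[
\ddot\omega \le 1 - \frac{1}{\omega} - \frac{\dot\omega^2}{4\omega}.
\]
Since $V_t/M = 3\dot\omega$, the hypothesis \eqref{E:L} is exactly $\dot\omega(0) < \tfrac{2}{\sqrt3}\,g(\omega(0))$. I would then check that the \emph{comparison equation} (equality above) has the first integral obtained via the substitution $\phi=\omega^{5/4}$, which removes the velocity-dependent term and gives $\ddot\phi = \tfrac54(\phi^{1/5}-\phi^{-3/5})=-W'(\phi)$ with barrier potential $W(\omega)=\tfrac{25}{8}\omega^{1/2}-\tfrac{25}{24}\omega^{3/2}$; here $W$ increases from $0$ at $\omega=0$ to a maximum $W(1)=\tfrac{25}{12}$ at the (saddle) equilibrium $\omega=1$, then decreases to $-\infty$. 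A direct computation shows that the energy level $\tfrac12\dot\phi^2+W=W(1)$ is precisely the separatrix $\dot\omega=\tfrac{2}{\sqrt3}g(\omega)$, with the sign of $g$ selecting the stable-manifold branch ($\dot\omega>0$ for $\omega<1$, $\dot\omega<0$ for $\omega>1$). Thus \eqref{E:L} says the data sits strictly below this separatrix, on the collapse side of the barrier.

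To close the argument I would track the energy $\mathcal E(s)=\tfrac12\dot\phi^2+W(\phi)$ along the true (inequality) trajectory. Because $\ddot\phi + W'(\phi)\le 0$, one gets $\tfrac{d}{ds}\mathcal E = \dot\phi\,(\ddot\phi+W'(\phi))$, which has sign opposite to $\dot\phi$: the energy decreases while $\omega$ increases and increases while $\omega$ decreases. Combined with the barrier shape this traps the trajectory on the collapse side: if $\dot\omega(0)<0$ then $\mathcal E$ only grows and the particle, with energy kept at or above the barrier top (the case $\omega(0)\ge1$) or already to the left of it, runs monotonically to $\omega=0$; if $0<\dot\omega(0)<\tfrac{2}{\sqrt3}g(\omega(0))$ (possible only for $\omega(0)<1$) then $\mathcal E(0)<W(1)$ and, since $\mathcal E$ can only decrease while $\dot\omega>0$, the particle cannot reach the barrier, turns around at some $\omega<1$, and thereafter falls into the previous case. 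In every sub-case $\dot\phi$ stays bounded away from $0$ near $\omega=0$ (there $\dot\phi^2\ge 2(\mathcal E(0)-W)\to 2\mathcal E(0)>0$), so $\phi$, and hence $\omega$ and $V$, reaches $0$ in finite $s$-time, i.e. finite $t$-time $T^*$. Finally the uncertainty principle $\|\nabla u\|_{L^2}^2\ge \tfrac{9M^2}{4V}\to+\infty$ as $V\to0$ gives \eqref{E:blow-up}, so the solution blows up at $T^*$.

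I expect the main obstacle to be this last phase-plane step: passing from the \emph{inequality} to genuine finite-time collapse. The natural first integral is only \emph{conditionally} monotone --- its direction flips with the sign of $\dot\omega$ --- so one cannot simply assert that $\mathcal E$ remains below the barrier value $\tfrac{25}{12}$. The careful part is organizing the turning-point analysis to rule out the dispersive alternative (escape to $\omega\to+\infty$) for every configuration below the separatrix, and confirming that the collapse occurs in finite rather than merely infinite time. The substitution $\phi=\omega^{5/4}$, which turns the velocity-dependent inequality into a clean one-dimensional barrier problem, is the device that makes this tractable, and verifying that its separatrix coincides exactly with the graph of $g$ is the computational heart of the proof.
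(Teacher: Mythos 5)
Your proposal is correct and follows essentially the same route as the paper: the same uncertainty-principle bound \eqref{E:uncertainty} inserted into the virial identity \eqref{E:virial}, the same velocity-eliminating substitution (your $\phi=\omega^{5/4}$ is the paper's $B=V^{5/4}$ after rescaling), and the same potential-barrier analysis in which the separatrix energy level $W(1)$ reproduces exactly the function $g$. The only difference is in how the inequality is closed into genuine collapse --- the paper invokes a comparison-with-the-conservative-system observation (the extra unknown force only pulls toward the origin), while you track the conditional monotonicity of the mechanical energy along the inequality trajectory directly; this is a slightly more self-contained rendering of the same mechanical argument.
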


\begin{figure}
\includegraphics[scale=0.7]{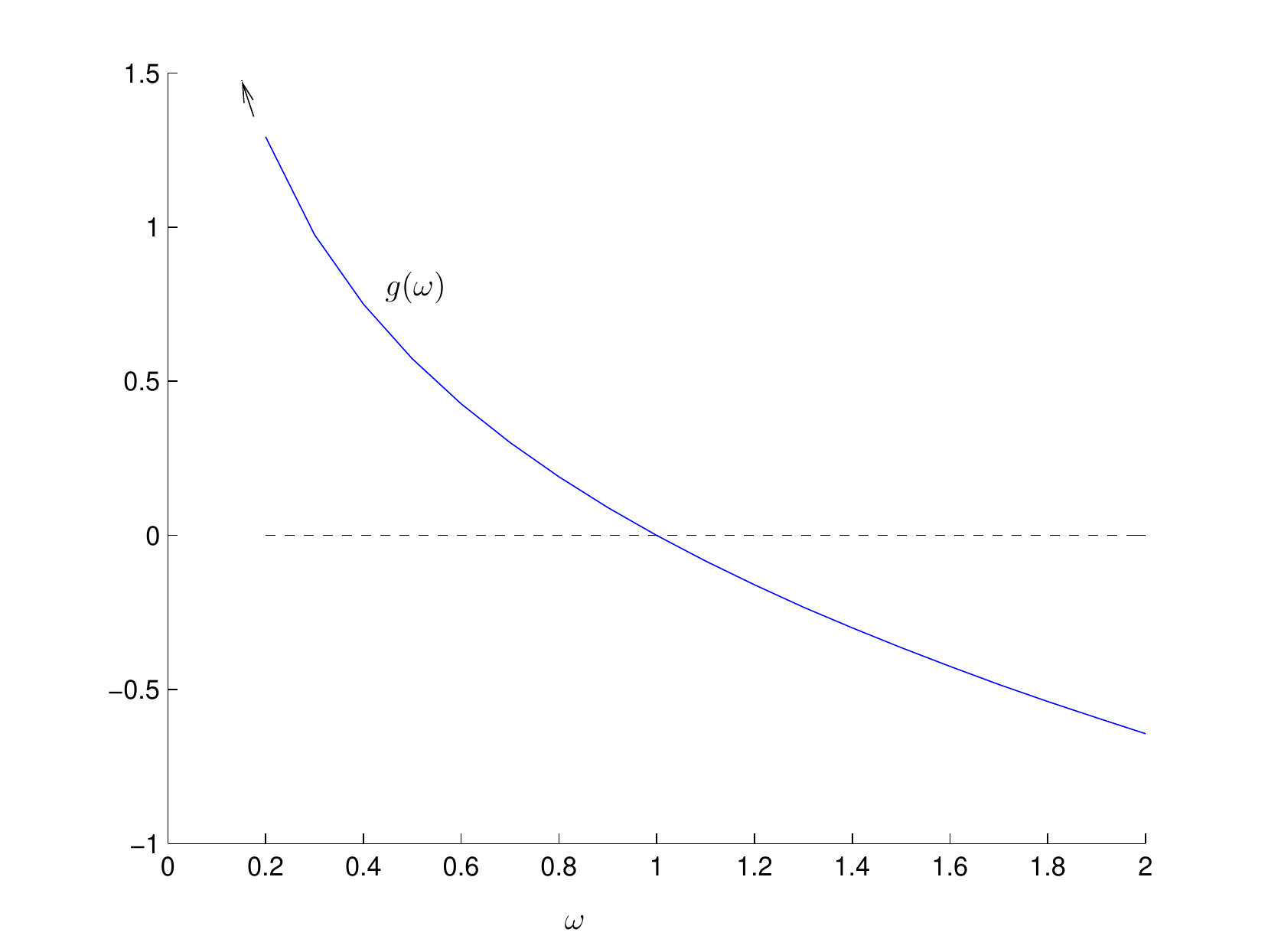}
\caption{A plot of $g(\omega)$ versus $\omega$, where $g$ is defined
in \eqref{E:g-def}.  This function appears in the blow-up conditions
in Theorems \ref{T:Lushnikov} and \ref{T:Lushnikov-adapted}.}
 \label{F:g-graph}
\end{figure}
For an explicit formulation of the condition \eqref{E:L} refer to \S
\ref{S:reformulation}, in particular, when initial datum is
real-valued \eqref{E:L} becomes \eqref{E:Lreal} and when it is
complex-valued the condition rewrites as in
\eqref{E:L+}-\eqref{E:L-}.

Theorem \ref{T:Lushnikov} is based upon use of the uncertainty principle,
\begin{equation}
 \label{E:uncertainty}
\|u\|_{L^2}^4 + \frac49 \left| \Im \int (x\cdot \nabla u) \bar u
\,dx \right|^2 \leq \frac49 \|xu\|_{L^2}^2\|\nabla u\|_{L^2}^2 \,,
\end{equation}
the virial identity, and a ``mechanical analysis'' of the resulting
second-order ODE in $V(t)$.  By replacing \eqref{E:uncertainty} with
\begin{equation}
 \label{E:interpolation}
\|u\|_{L^2} \leq \left( \frac{2^2\cdot 7^5 \cdot \pi^2}{3^5\cdot
5^2}\right)^\frac{1}{14} \|xu\|_{L^2}^\frac37\|u\|_{L^4}^\frac47 ,
\end{equation}
we can obtain a different condition which in some cases improves
upon Theorem \ref{T:Lushnikov}.  The inequality
\eqref{E:interpolation} can be thought of as a variant of the
H\"older interpolation inequality $\|u\|_{L^2} \leq
\|u\|_{L^{6/5}}^{3/7}\|u\|_{L^4}^{4/7}$, since $\|u\|_{L^{6/5}}$ and
$\|xu\|_{L^2}$ scale the same way.  Both inequalities
\eqref{E:uncertainty} and \eqref{E:interpolation} are stated here
with sharp constants and are proved in \S\ref{S:inequalities}.

\begin{theorem}
\label{T:Lushnikov-adapted}
Suppose that $u_0\in H^1$ and $\|xu_0\|_{L^2}<\infty$.
The following is a sufficient condition for blow-up in finite time:
\begin{equation}
 \label{E:LA}
\frac{V_t(0)}{M} < \frac{2\sqrt 2 (ME)^\frac16 }{C^\frac73}  \;
g\left( \frac{4C^\frac{14}{3}E^\frac23}{M^\frac73} V(0)\right)\,,
\qquad C= \left( \frac{2^2\cdot 7^5 \cdot \pi^2}{3^5\cdot
5^2}\right)^\frac{1}{14} ,
\end{equation}
where $g$ is defined in \eqref{E:g-def} and graphed in Figure
\ref{F:g-graph}.
\end{theorem}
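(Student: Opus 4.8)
The plan is to run the mechanical analysis underlying Theorem \ref{T:Lushnikov}, taking advantage of the fact that using the interpolation inequality \eqref{E:interpolation} in place of the uncertainty principle \eqref{E:uncertainty} produces a second-order differential inequality for the variance $V(t)$ containing \emph{no} $V_t$ term, which is far easier to integrate. Throughout I write $M=\|u\|_{L^2}^2$ and $V=\|xu\|_{L^2}^2$, and I use $E>0$ as in the statement.

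First I would derive the key inequality. The $n=3$ virial identity \eqref{E:virial} reads $V_{tt}=24E-4\|\nabla u\|_{L^2}^2$, and eliminating $\|\nabla u\|_{L^2}^2$ via the definition of the energy gives $\|u\|_{L^4}^4=8E-\tfrac12 V_{tt}$. On the other hand, raising \eqref{E:interpolation} to the seventh power and using $\|xu\|_{L^2}^2=V$ yields the lower bound $\|u\|_{L^4}^4\ge M^{7/2}C^{-7}V^{-3/2}$. Combining the two produces the clean comparison
\[
V_{tt}\le 16E-\frac{2M^{7/2}}{C^{7}}\,V^{-3/2}=-U'(V),\qquad U(V)\defeq -16E\,V-\frac{4M^{7/2}}{C^{7}}\,V^{-1/2}.
\]

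Next I would set up the mechanical picture. Since $E>0$, the potential $U$ tends to $-\infty$ as $V\to0^+$ and as $V\to\infty$ and has a single interior maximum at $V_*=(M^{7/2}/(8EC^7))^{2/3}$; in the dimensionless variable $\omega\defeq 4C^{14/3}E^{2/3}M^{-7/3}V$ this maximum sits at $\omega=1$, and a direct computation gives $U=-B(\omega+2\omega^{-1/2})$ with $B\defeq 4E^{1/3}M^{7/3}C^{-14/3}$, so that $U(V_*)=-3B$. Introducing the mechanical energy $\mathcal E(t)\defeq \tfrac12 V_t^2+U(V)$, the inequality $V_{tt}\le -U'(V)$ gives $\mathcal E'=V_t\big(V_{tt}+U'(V)\big)$ with the second factor $\le0$; hence $\mathcal E$ is nonincreasing while $V_t\ge0$ and nondecreasing while $V_t\le0$. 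The level $\mathcal E=U(V_*)=-3B$ is exactly the separatrix through the unstable equilibrium, and solving $\tfrac12 V_t(0)^2+U(V(0))=-3B$ for $V_t(0)$ gives $V_t(0)=\sqrt{2B}\,g(\omega(0))$, where the sign convention in \eqref{E:g-def} selects the correct branch on each side of $\omega=1$. Dividing by $M$ and checking the constant $\sqrt{2B}/M=2\sqrt2(ME)^{1/6}C^{-7/3}$ reproduces precisely the boundary of hypothesis \eqref{E:LA}.

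Finally I would show that \eqref{E:LA} drives the system into the region $\{V<V_*,\ V_t<0\}$, after which blow-up is immediate: for $V<V_*$ one has $-U'(V)<0$, so $V_{tt}<0$ there, and thus once $V_t<0$ with $V<V_*$ the derivative $V_t$ stays negative and decreasing, $V$ remains below $V_*$, and by concavity $V$ would reach $0$ in finite time; since $V\ge0$ this forces the maximal existence time to be finite, giving \eqref{E:blow-up}. To see that \eqref{E:LA} guarantees entry into the good region I would split on the sign of $\omega(0)-1$: if $\omega(0)\le1$ and $V_t(0)<0$ the system already sits there; if $\omega(0)\le1$ and $0\le V_t(0)<\sqrt{2B}\,g(\omega(0))$ then $\mathcal E(0)<U(V_*)$, and since $\mathcal E$ decreases while $V_t>0$ the trajectory cannot climb to $V_*$ and must turn around at some $V<V_*$; if $\omega(0)>1$ then \eqref{E:LA} forces $V_t(0)<0$ with $\mathcal E(0)>U(V_*)$, and since $\mathcal E$ increases while $V_t<0$ no turning point is possible in $\{V>V_*\}$, so $V$ descends monotonically through $V_*$. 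I expect the main obstacle to be exactly this last step: converting the conserved-energy phase portrait of the \emph{equality} ODE into a rigorous argument for the \emph{inequality}, which is why the one-sided monotonicity of $\mathcal E$ (whose sign of variation is tied to that of $V_t$) and the strict concavity of $V$ below $V_*$ must be used in place of exact energy conservation.
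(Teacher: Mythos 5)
Your proposal is correct and follows essentially the same route as the paper's proof in \S\ref{S:Lushnikov-adapted}: the identical differential inequality $V_{tt}\leq 16E-2M^{7/2}C^{-7}V^{-3/2}$ obtained from the virial identity, energy conservation, and \eqref{E:interpolation}, the same effective potential $U(V)=-16EV-4M^{7/2}C^{-7}V^{-1/2}$ and mechanical energy, the same two sufficient conditions (trapped below the barrier with $V(0)<V_{max}$, or at/above barrier energy with inward velocity), merged into \eqref{E:LA}. The only notable difference is presentational: where the paper handles the inequality via the informal ``unknown force pulling toward the origin'' comparison inherited from Lushnikov's mechanical analogy, you make this rigorous directly through the one-sided monotonicity of $\mathcal{E}(t)=\tfrac12 V_t^2+U(V)$ (nonincreasing when $V_t\geq 0$, nondecreasing when $V_t\leq 0$) together with strict concavity of $V$ in the region $V<V_{max}$, which is a legitimate and slightly tighter rendering of the same argument.
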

For an explicit reformulation of \eqref{E:LA} refer to \S
\ref{S:reformulation}, in particular, for the real-valued initial
datum it becomes \eqref{E:LAreal} and for the complex-valued datum
it is equivalent to \eqref{E:LA+} - \eqref{E:LA-}.

Note that \eqref{E:LA} can be put into the form
$$
\frac{V_t(0)}{M} < 2\sqrt{3} \, \tilde g  \left( \frac{8EV(0)}{3M^2}
\right)\,, \qquad \tilde g(\omega) = \mu \, g(\mu^{-2}\omega) \,,
\qquad \mu = \frac{\sqrt 2 (ME)^\frac16}{\sqrt 3 C^\frac73} ,
$$
which offers a comparison between Theorem \ref{T:Lushnikov} and
\ref{T:Lushnikov-adapted}. These conditions should be compared to
the sufficient condition for $E[u]>0$ in the 2d case, namely
\eqref{E:2dblow-up}.

The approach via the interpolation inequality
\eqref{E:interpolation} also allows us to prove a radial,
infinite-variance version of Theorem \ref{T:Lushnikov-adapted} based
upon a local virial identity, Strauss' radial Gagliardo-Nirenberg
inequality \cite{S}, and a bootstrap argument.  Select a smooth
radial (nonstrictly) increasing function $\psi(x)$ such that
$\psi(x)=|x|^2$ for $0\leq |x|\leq 1$ and $\psi(x)=2$ for $|x|\geq
2$.  Define the \emph{localized variance}
\begin{equation}
\label{E:lv-def}
V_R \defeq \int R^2\psi(x/R)|u(x)|^2 \, dx \,.
\end{equation}
Note that by the dominated convergence theorem, for any $u_0\in
L^2$, we have
$$
\lim_{R\to +\infty} \frac{V_R(0)}{R^2M} =0 \,.
$$
Thus, there always exists $R$ such that \eqref{E:lv-initial} below
holds.

\begin{theorem}
 \label{T:Lushnikov-radial}
Suppose $ME>1$.  Fix $\delta \ll 1$ (the smallness depends only on
$\psi(x)$ in \eqref{E:lv-def}).  Given $u_0\in H^1$ radial, take any $R$
such that
\begin{equation}
 \label{E:lv-initial}
\frac{V_R(0)}{M} \leq \frac12 R^2 \,, \qquad R^2 \gtrsim
\frac{M^2}{\delta}
\end{equation}
(the implicit constant in the second inequality again depends only
on $\psi(x)$ in \eqref{E:lv-def}).  Then the following is a
sufficient condition for blow-up in finite time:
\begin{equation}
 \label{E:Linf}
\begin{aligned}
\frac{(V_R)_t(0)}{M} < \frac{\sqrt 6 (8+\delta)^\frac16
(1-\delta)^{\frac13} (ME)^\frac16}{(C_{\infty})^\frac73} \; g\left(
\frac{(8+\delta)^\frac23}{(1-\delta)^\frac23} \frac{
(C_{\infty})^\frac{14}{3}E^\frac23}{M^\frac73} V_R(0)\right)\,, &\\
C_\infty = \left( \frac{2^{11}  \pi^2}{3^2} \right)^\frac{1}{14} \,, &
\end{aligned}
\end{equation}
where $g$ is defined in \eqref{E:g-def} and graphed in Figure
\ref{F:g-graph}.
\end{theorem}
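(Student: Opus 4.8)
\emph{Overall strategy.} The plan is to transplant the proof of Theorem \ref{T:Lushnikov-adapted} to the localized variance $V_R$, paying for the infinite variance with annular error terms that the radial structure keeps small and that a bootstrap keeps small uniformly in time. The skeleton is unchanged: I would first produce a closed differential inequality of the schematic form $\partial_t^2 V_R \le c_1 E - c_2 M^{7/2}(C_\infty^7 V_R^{3/2})^{-1}$, with $\delta$-dependent constants $c_1,c_2$ fixed by matching \eqref{E:Linf}, and then run the ``mechanical analysis'' of the resulting second-order ODE. Since radial symmetry is preserved by \eqref{E:NLSa}, $u(t)$ stays radial for all $t$, which is exactly what makes both Strauss' inequality and the radial version of the interpolation inequality \eqref{E:interpolation} available along the whole evolution.

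\emph{The differential inequality.} For radial $u$ and the radial weight $a(x)=R^2\psi(x/R)$ the localized virial identity takes the form $\partial_t^2 V_R = 4\int a''\,|\partial_r u|^2\,dx-\int(\Delta a)\,|u|^4\,dx-\int(\Delta^2 a)\,|u|^2\,dx$, with $a''$ the radial second derivative; on $|x|\le R$ the first two weights are $2$ and $6$, so the integrand there is exactly the $8|\nabla u|^2-6|u|^4$ of \eqref{E:virial}, and the genuinely new contributions are confined to the annulus $R\le|x|\le 2R$. Substituting $\|\nabla u\|_{L^2}^2=2E+\tfrac12\|u\|_{L^4}^4$ from energy conservation reorganizes this into a main part ($E$ minus a positive multiple of $\int|u|^4$) plus two errors: a biharmonic term $\lesssim R^{-2}\|\Delta^2\psi\|_{L^\infty}M$, which by \eqref{E:lv-initial} and the hypothesis $ME>1$ is $\lesssim\delta/M\lesssim\delta E$; and the nonlinear exterior term $\int_{|x|>R}|u|^4$, which I would estimate by Strauss' radial bound $\||x|u\|_{L^\infty}\lesssim\|u\|_{L^2}^{1/2}\|\nabla u\|_{L^2}^{1/2}$, giving $\int_{|x|>R}|u|^4\lesssim R^{-2}M^{3/2}\|\nabla u\|_{L^2}$ and then, by Young, $\le\delta\|\nabla u\|_{L^2}^2+(\mathrm{const})\,\delta E$. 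These two $O(\delta)$ losses are what convert the clean coefficients into the $(8+\delta)$ and $(1-\delta)$ of \eqref{E:Linf}. Feeding the radial/localized analogue of \eqref{E:interpolation} (with its sharp constant $C_\infty$ and $V_R$ in the role of $\|xu\|_{L^2}^2$) bounds the surviving negative nonlinear term below by a multiple of $M^{7/2}(C_\infty^7 V_R^{3/2})^{-1}$, yielding the inequality above.

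\emph{The main obstacle: the bootstrap.} The step I expect to be hardest is making this differential inequality hold \emph{uniformly in time}. The lower bound for $\int|u|^4$ is only effective while the mass stays concentrated, i.e. while $V_R(t)$ remains within the window guaranteed by \eqref{E:lv-initial}, and the Strauss estimate reintroduces the a priori uncontrolled quantity $\|\nabla u(t)\|_{L^2}$; thus the inequality meant to drive $V_R$ downward presupposes the very concentration it should sustain. I would close this with a continuity argument on the maximal interval $[0,T_0)$ on which a bootstrap hypothesis such as $V_R(t)\le R^2 M$ holds: there the inequality is valid, the mechanical analysis below confines the comparison trajectory to $\omega\le 1$ so that $V_R$ stays within the concentration window (its initial excursion above $V_R(0)\le\tfrac12 R^2 M$ being bounded by the turning point of the conservative dynamics), which strictly improves the hypothesis and extends $T_0$. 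The smallness $\delta\ll1$ together with $R^2\gtrsim M^2/\delta$ provides precisely the slack needed to absorb the errors and close the loop.

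\emph{Mechanical analysis and conclusion.} This part is identical to Theorem \ref{T:Lushnikov-adapted}. Rescaling $V_R=A\omega$, $t=B\tau$ with $A=(c_2/(c_1 E))^{2/3}M^{7/3}$ collapses the differential inequality to the autonomous comparison ODE $\ddot\omega\le 1-\omega^{-3/2}$; the right-hand side is $\mathcal U'(\omega)$ for $\mathcal U(\omega)=\omega+2\omega^{-1/2}$, whose unique critical point $\omega=1$ is an unstable equilibrium with $\mathcal U(1)=3$. The conserved energy $\tfrac12\dot\omega^2-\mathcal U(\omega)$ of the comparison flow equals $-3$ on the separatrix through $(1,0)$, i.e. $\tfrac12\dot\omega^2=\mathcal U(\omega)-3=g(\omega)^2$ with $g$ as in \eqref{E:g-def}, so the separatrix is $\dot\omega=\pm\sqrt2\,g(\omega)$. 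Unwinding the rescaling, condition \eqref{E:Linf} is exactly the assertion that $(\omega(0),\dot\omega(0))$ lies strictly below the branch $\dot\omega=\sqrt2\,g(\omega)$. Since the true trajectory obeys the \emph{inequality} $\ddot\omega\le\mathcal U'(\omega)$ rather than equality, a comparison argument — the delicate point being that its acceleration never exceeds the conservative one, so it cannot cross the separatrix upward — shows it remains below and is driven monotonically to $\omega=0$ in finite time. Hence $V_R$ would vanish in finite time; as $V_R\ge 0$ and $M$ is conserved, the localized interpolation inequality forces $\|\nabla u(t)\|_{L^2}\to+\infty$ before that time, which is the asserted finite-time blow-up.
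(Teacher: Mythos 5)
Your proposal is correct and follows the paper's own proof in every essential respect: the same local virial identity with weight $R^2\psi(x/R)$, the same two error estimates (the biharmonic/mass term via $R^2\gtrsim M^2/\delta$ and $ME>1$, and the exterior $L^4$ term via Strauss' radial inequality plus Young, converted with energy conservation), the same localized interpolation lemma with constant $C_\infty$ under the hypothesis $V_R/M\le\tfrac12 R^2$, the same bootstrap reinforced by the mechanical analysis (the rescaled threshold $\tilde V_b\gtrsim \delta^{-1}\gg1$), and the same separatrix/energy analysis as in Theorem \ref{T:Lushnikov-adapted}. The only differences are cosmetic: the paper reduces to the conservative dynamics via Lushnikov's unknown-force comparison rather than your phase-plane invariance phrasing, and your bootstrap window $V_R\le R^2M$ should simply be taken as $V_R\le\tfrac12R^2M$ so that Lemma \ref{L:interp-radial} applies throughout.
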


One way to generate examples of $u_0$ satisfying the hypotheses of
Theorem \ref{T:Lushnikov-radial} but not Theorem
\ref{T:Lushnikov-adapted} is to take any of the examples detailed
below for which Theorem \ref{T:Lushnikov-adapted} applies, but tack
on a slowly decaying tail of infinite variance but at very large
radii.  For example, redefine $u_0$ at very large radii to be
$u_0(x) = |x|^{-2}$. However, the main merit of Theorem
\ref{T:Lushnikov-radial} is the availability, in the case of real
initial data, of a conceptual interpretation in terms of the way in
which mass is initially distributed.

\begin{corollary}
 \label{C:conceptual}
There is $0<\delta\ll 1$ such that the following holds. Suppose that
$ME>1$, $u_0\in H^1$ is radial and real,
\begin{equation}
 \label{E:mass-dist}
\frac{1}{M} \int_{|x|\geq \delta^{1/2} M(ME)^{-1/3}} |u_0|^2 dx \leq
\delta^2 (ME)^{-2/3} \,.
\end{equation}
Then blow-up occurs in finite time.
\end{corollary}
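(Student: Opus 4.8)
The plan is to deduce this from Theorem \ref{T:Lushnikov-radial} by exploiting the fact that \emph{real} initial data have vanishing localized-variance derivative. First I would record that for real $u_0$ the local virial identity underlying Theorem \ref{T:Lushnikov-radial} gives $(V_R)_t(0) = 2\Im \int \nabla\!\big(R^2\psi(x/R)\big)\cdot \bar u_0 \nabla u_0 \, dx = 0$, since $\bar u_0 \nabla u_0$ is real-valued. Consequently the left-hand side of the blow-up condition \eqref{E:Linf} is $0$, and because the prefactor on its right-hand side is strictly positive (all factors are positive when $ME>1$ and $\delta \ll 1$), condition \eqref{E:Linf} reduces to the single requirement that $g$, evaluated at its argument, be positive. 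Here I would invoke the elementary analysis of $g$ from \eqref{E:g-def}: since $\tfrac{2}{\omega^{1/2}} + \omega - 3$ is strictly decreasing on $(0,1)$ and vanishes at $\omega = 1$, one has $g(\omega)>0$ precisely when $0<\omega<1$.

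Thus it suffices to arrange that the argument of $g$ lie in $(0,1)$; as it is automatically positive, the task becomes showing
$$
\frac{(8+\delta)^{2/3}}{(1-\delta)^{2/3}}\,\frac{C_\infty^{14/3}E^{2/3}}{M^{7/3}}\,V_R(0) < 1,
$$
equivalently $V_R(0) < \frac{(1-\delta)^{2/3}}{(8+\delta)^{2/3}}\,\frac{M^{7/3}}{C_\infty^{14/3}E^{2/3}}$, for some admissible $R$. To estimate $V_R(0)$ from the mass-distribution hypothesis \eqref{E:mass-dist}, I would take $R^2$ to be a fixed multiple of $M^2/\delta$, the smallest value allowed by the second constraint in \eqref{E:lv-initial}; note that $R \gg r_0 := \delta^{1/2}M(ME)^{-1/3}$ since $ME>1$. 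Writing $V_R(0) = \int_{|x|\le R}|x|^2|u_0|^2\,dx + \int_{|x|\ge R}R^2\psi(x/R)|u_0|^2\,dx$ and splitting the first integral at $|x|=r_0$, I would bound the three pieces using $|x|^2\le r_0^2$ on the inner ball, $|x|^2\le R^2$ on the annulus $r_0\le|x|\le R$, and $R^2\psi(x/R)\le 2R^2$ on the exterior, invoking \eqref{E:mass-dist} to control the mass on $\{|x|\ge r_0\}$. Since $r_0^2 M = \delta M^{7/3}E^{-2/3}$ and $R^2\cdot M\delta^2(ME)^{-2/3} = \delta M^{7/3}E^{-2/3}$, all three contributions are $O(\delta M^{7/3}E^{-2/3})$, giving $V_R(0)\lesssim \delta\,M^{7/3}E^{-2/3}$ (using $M^3(ME)^{-2/3}=M^{7/3}E^{-2/3}$).

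It then remains to verify the two constraints of Theorem \ref{T:Lushnikov-radial} and close the estimate. The bound $V_R(0)\lesssim \delta M^{7/3}E^{-2/3}$ together with $R^2\sim M^2/\delta$ and $ME>1$ makes $V_R(0)/M \le \frac12 R^2$ immediate, and $R^2\gtrsim M^2/\delta$ holds by construction. Since the threshold $\frac{(1-\delta)^{2/3}}{(8+\delta)^{2/3}}C_\infty^{-14/3}$ is bounded below by a positive constant while my bound on $V_R(0)$ carries the prefactor $\delta$, choosing $\delta$ small enough (depending only on $\psi$ and $C_\infty$) forces the displayed inequality above, hence $g>0$, and blow-up follows from Theorem \ref{T:Lushnikov-radial}.

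The main obstacle I anticipate is the tension in the choice of $R$: the natural scale $r_0$ dictated by the hypothesis is far too small to satisfy $R^2\gtrsim M^2/\delta$, so one is forced to take $R\sim M/\sqrt\delta$ and then check that the mass lying in the large annulus $r_0\le|x|\le R$, weighted by the potentially large factor $|x|^2\le R^2$, is still absorbed by the strong tail bound \eqref{E:mass-dist}. This balancing is precisely where the exact powers $(ME)^{-1/3}$ in the radius and $\delta^2(ME)^{-2/3}$ in the mass threshold are needed, so that the annulus contribution comes out at the same order $\delta M^{7/3}E^{-2/3}$ as the inner-ball and exterior terms rather than swamping them.
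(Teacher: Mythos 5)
Your proposal is correct and follows essentially the same route as the paper's proof: take $R\sim M\delta^{-1/2}$ (the smallest admissible radius), use reality of $u_0$ to make $(V_R)_t(0)=0$ so that \eqref{E:Linf} reduces to the argument of $g$ being less than $1$, and bound $V_R(0)\lesssim \delta M^{7/3}E^{-2/3}$ by splitting at $|x|=\delta^{1/2}M(ME)^{-1/3}$ and invoking \eqref{E:mass-dist} for the outer mass. Your three-piece splitting (inner ball, annulus, exterior) is just a slightly finer version of the paper's two-piece decomposition, which handles everything outside the inner ball at once via $R^2\psi(x/R)\lesssim R^2$; the estimates and the final smallness-in-$\delta$ argument are identical.
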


Note that the quantity on the left-side of \eqref{E:mass-dist} is
the fraction of initial mass occurring outside the ball of radius
$\delta M (ME)^{-1/3}$.  Thus, \eqref{E:mass-dist} states that most
mass is \emph{inside} the ball of radius $\delta M (ME)^{-1/3}$, and
we intuitively expect an initially highly concentrated real solution
to blow-up.  By the scaling \eqref{E:scaling}, it is natural that
the radius scales linearly with $M$.

Theorems \ref{T:Lushnikov}, \ref{T:Lushnikov-adapted}, and
\ref{T:Lushnikov-radial}, and Corollary \ref{C:conceptual} are
proved in \S\ref{S:Lushnikov}, and the reformulation of the
conditions of Theorems \ref{T:Lushnikov} and
\ref{T:Lushnikov-adapted} is in \S \ref{S:reformulation}.

In the remainder of the paper,
\S\ref{S:sechprofile}--\ref{S:osc-Gaussian}, we examine several
specific \emph{radial} initial data given as profiles with several
parameters.\footnote{Since we work exclusively with radial data, we
write our functions as functions of $r\in (0,+\infty)$, but keep in
mind that we are studying the 3d NLS equation.}  In each case, we
report the predictions given by Theorems \ref{T:DHR},
\ref{T:Lushnikov}, and \ref{T:Lushnikov-adapted}, and also the
results of numerical simulations. In general, Theorems
\ref{T:Lushnikov}, \ref{T:Lushnikov-adapted} may not give better
results than Theorem \ref{T:DHR} (we have one example where it does
not, see Figure \ref{F:sechp}), however, we show that they give new
information in many other cases, in particular, when initial data
has a negative value of $V_t(0)$.

In \S\ref{S:sechprofile}, we consider initial data
$$
u_0(x) = \lambda^{3/2}Q(\lambda r) \,e^{i\gamma r^2} ,
$$
where $Q$ is the ground state solution to \eqref{E:Q}.  The case
$\gamma=0$ is completely understood by Theorem \ref{T:DHR}.  In
fact, $\lambda^{3/2}Q(\lambda r)$ corresponds to the parabolic-like
boundary curve in Figure \ref{F:dichotomy} which lies below the
$M[u]E[u]=M[Q]E[Q]$ line for all $\lambda \neq 1$, and we have
blow-up for $\lambda>1$ and scattering for $\lambda<1$.  The case
$\gamma \neq 0$ is more interesting; in particular, for negative
phase, $\gamma < 0$, Theorems \ref{T:Lushnikov} and
\ref{T:Lushnikov-adapted} give new range on parameters $(\lambda,
\gamma)$ for which there will be a blow up, see Figure
\ref{F:sechn}. Although for positive phase, $\gamma > 0$, there is
no new information on blow up other than provided by Theorem
\ref{T:DHR}, we show the numerical results for the blow up threshold
in Figure \ref{F:sechp}, in particular, there is no blow up for
$\lambda < 1$ is expected.

In \S\ref{S:Gaussian}, we consider Gaussian initial data
$$
u_0(x) = p \, e^{-\alpha r^2/2} e^{i\gamma r^2} \,.
$$
By scaling, it suffices to consider $\gamma =0, \pm \frac 12$.  In
the real case $\gamma=0$, the behavior will be a function of
$p/\sqrt \alpha$, and the results are depicted in Figure
\ref{F:all-gauss-nophase}.  The case $\gamma=\frac12$ appears in
Figure \ref{F:gaussp}, and the case $\gamma=-\frac12$ appears in
Figure \ref{F:gaussn}. For this initial data Theorem
\ref{T:Lushnikov} gives the best range for blow up, although Theorem
\ref{T:Lushnikov-adapted} gives an improvement over the Theorem
\ref{T:DHR}.

In \S\ref{S:super-Gaussian}, we consider ``super-Gaussian'' initial
data
$$
u_0(x) = p \, e^{-\alpha r^4/2} e^{i\gamma r^2} \,.
$$
By scaling, again it suffices to consider $\gamma =0, \pm \frac 12$.
In the real case $\gamma=0$, the behavior will be a function of
$p/\alpha^{1/4}$, and is depicted in Figure
\ref{F:supergauss-nophase}. The case $\gamma=\frac12$ is presented
in Figure \ref{F:supergaussp}, and the case $\gamma=-\frac12$ is
presented in Figure \ref{F:supergaussn}. For this initial data
Theorem \ref{T:Lushnikov-adapted} gives the best theoretical range
for blow up.

In \S\ref{S:off-Gaussian}, we consider ``off-centered Gaussian''
initial data
$$
u_0(x) = p \, r^2 e^{-\alpha r^2} e^{i\gamma r^2} \,.
$$
By scaling, again it suffices to consider $\gamma =0, \pm \frac 12$.
In the real case $\gamma=0$, the behavior will be a function of
$p/\alpha^{3/4}$, and the results are presented in Figure
\ref{F:all-off-gauss-nophase}. The case $\gamma=+\frac12$ is given
in Figure \ref{F:off-gaussp} and the case $\gamma=-\frac12$ is given
in Figure \ref{F:off-gaussn}. For this initial data Theorem
\ref{T:Lushnikov-adapted} gives as well the best theoretical range
for blow up.

In \S\ref{S:osc-Gaussian}, we consider ``oscillatory Gaussian''
initial data
$$
u_0(x) = p \, \cos (\beta r) \, e^{-r^2} e^{i\gamma r^2} \,.
$$
We restrict our attention to $\gamma =0, \pm \frac 12$, presented in
Figures \ref{F:oscgauss}, \ref{F:oscgaussp}, and \ref{F:oscgaussn},
respectively. For the oscillatory Gaussian the best theoretical
range on blow up threshold is provided by a combination of Theorems
\ref{T:Lushnikov}, \ref{T:Lushnikov-adapted}: for small
oscillations, $\beta \lesssim 1$, Theorem \ref{T:Lushnikov} is
stronger, and for fast oscillations, $\beta \gtrsim 1$, Theorem
\ref{T:Lushnikov-adapted} provides a better range (for exact values
see the above Figures).

The numerics described in
\S\ref{S:sechprofile}--\ref{S:osc-Gaussian} provide evidence to
support the following conjectures.

\smallskip

\noindent\textbf{Conjecture 1}. For each $\epsilon>0$, there exists
radial Schwartz initial data $u_0$ for which $M[u]E[u]>M[Q]E[Q]$,
$\|u_0-Q\|_{H^1}<\epsilon$, $u(t)$ scatters as $t\to -\infty$, and
$u(t)$ blows-up in finite forward time.

\smallskip

That is, there exist initial data arbitrarily close to D in Figure
\ref{F:dichotomy} with the property that the backward time evolution
results in scattering but the forward time evolution results in
finite time blow-up.  The numerical evidence of the existence of
such solutions is a consequence of the study of initial data of the
form $\lambda^{3/2}Q(\lambda r) e^{i\gamma r^2}$ in
\S\ref{S:sechprofile}.  Take $\lambda<1$ but close to $1$.  Then we
find that there exists a curve $\gamma_0(\lambda)$ such that
$\lim_{\lambda \nearrow 1} \gamma_0(\lambda)= 0$ with the following
property:  If $|\gamma|> \gamma_0(\lambda)$, then
$M[u]E[u]>M[Q]E[Q]$, and $u_0$ evolves to a solution $u(t)$ blowing
up in finite positive time if $\gamma<-\gamma_0(\lambda)$ but $u_0$
evolves to a scattering solution in positive time if
$\gamma>\gamma_0(\lambda)$ (see Figures \ref{F:sechn} and
\ref{F:sechp}).  By the time reversal property
$$
u(t) \text{ solves NLS }\implies \bar u(-t) \text{ solves NLS}
$$
we conclude that  if $\gamma<-\gamma_0(\lambda)$, $u(t)$ scatters
\emph{backward} in time.  We can take $\lambda$ as close to $1$ and
$\gamma$ as close to $0$ as we please (while maintaining
$\gamma<-\gamma_0(\lambda)$, establishing the claimed conjecture (as
a result of observed numerical behavior).

Before proceeding, let us remark on some consequences assuming this
conjecture is valid.  We see from Figure \ref{F:dichotomy} that the
smallest admissible value of $\eta(0)^2$ that could lead to a
finite-time blow-up solution is $\frac13+$  (Corner B).

\smallskip

\noindent\textbf{1st Corollary of Conjecture 1}.  For each
$\epsilon>0$, there exist initial data $u_0$ with
$M[u]E[u]>M[Q]E[Q]$ and $\eta^2(0)< \frac13+\epsilon$ for which the
evolution $u(t)$ blows-up in finite time.  For each $N \gg 1$, there
exists initial data $u_0$ with $M[u]E[u]>M[Q]E[Q]$ and $\eta^2(0)
\geq N$ leading to a scattering solution $u(t)$.

\smallskip

More loosely stated, there exist initial data as close to point B in
Figure \ref{F:dichotomy} leading to finite-time blow-up solutions,
and there exist initial data as far to the right in the direction of
E in Figure \ref{F:dichotomy} leading to scattering solutions. This
establishes the irrelevance of the size of $\eta(0)$ in predicting
blow-up or scattering in the case $M[u]E[u]>M[Q]E[Q]$.

This follows from Conjecture 1 as follows. Taking $u(t)$ to be a
solution of the type described in Conjecture 1, note that
$\lim_{t\searrow -\infty} \eta^2(t) = \frac13$.  Hence, for some
large negative time $-T$, we have $\frac13 <\eta^2(-T)<
\frac13+\epsilon$.  Resetting, by time translation, to make time
$-T$ into time $0$ gives the first type of solution described here.
On the other hand, if $T^*$ denotes the blow-up time of $u(t)$, then
$\lim_{t\nearrow T^*} \eta^2(t) = +\infty$.  Therefore, there exists
a time $T'<T^*$ but close to $T^*$ such that $\eta^2(T')>N$.
Applying the time reversal symmetry and time translation gives the
second type of solution described here.

\smallskip

\noindent\textbf{2nd Corollary of Conjecture 1}.  For each
$\epsilon>0$, there exists initial data $u_0$ with
$\|u_0\|_{L^3}<\epsilon$ for which $u(t)$ blows-up in finite time.
For each $N \gg 1$, there exists initial data $u_0$ for which
$\|u_0\|_{L^3} \geq N$ and for which $u(t)$ scatters.

\smallskip

Stated more loosely, the quantity $\|u_0\|_{L^3}$ is irrelevant to
predicting blow-up or scattering.  Thus, the critical Lebesgue norm
gives no prediction of dynamical behavior in the 3d case; note the
contrast with the 1d case discussed above, where the critical
Lebesgue norm $L^1$ determines the threshold for soliton formation.

This follows from Conjecture 1 by the same type of reasoning used to
justify the first corollary, since if $u(t)$ scatters in negative
time we have $\lim_{t\searrow -\infty} \|u(t)\|_{L^3} = 0$ and if
$u(t)$ blows-up in finite forward time $T^*$, we have
$\lim_{t\nearrow T^*} \|u(t)\|_{L^3} = +\infty$.  This latter fact
was proved by Merle-Rapha\"el \cite{MR}.

\smallskip

What about the $\dot H^{1/2}$ norm? The ``small data scattering
theory'' (essentially a consequence of the Strichartz estimates --
see \cite{HR2} for exposition) states that there exists $\delta>0$
such that if $\|u_0\|_{\dot H^{1/2}}<\delta$, then $u(t)$ scatters
in both time directions.  It is then natural to ask whether $\delta$
in the above statement can be improved to $\|Q\|_{\dot H^{1/2}}$.
\begin{theorem}
There exist radial initial data
$u_0$ for which $\|u_0\|_{\dot H^{1/2}}<\|Q\|_{\dot H^{1/2}}$ and
$u(t)$ blows-up in finite forward time.
\end{theorem}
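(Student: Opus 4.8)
The plan is to exhibit explicit radial Gaussian data with negative quadratic phase and apply Lushnikov's criterion (Theorem~\ref{T:Lushnikov}), tuning the free parameters so that the $\dot H^{1/2}$ norm drops below $\|Q\|_{\dot H^{1/2}}$. I would take
$$
u_0(x) = p\, e^{-\alpha r^2/2}\, e^{i\gamma r^2}, \qquad \gamma<0,
$$
and evaluate every scale-invariant ingredient of \eqref{E:L} by Gaussian integration. This gives $M = p^2(\pi/\alpha)^{3/2}$ and $V(0)=\frac{3}{2\alpha}M$, and — since only the phase contributes to the momentum density — $V_t(0)=8\gamma V(0)<0$, which is exactly the sign that forces forward blow-up. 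A short computation also yields $\|\nabla u_0\|_{L^2}^2 = (\alpha^2+4\gamma^2)V(0)$ and $\|u_0\|_{L^4}^4 = 2^{-3/2}p^2 M$, hence a closed form for $E$; and taking the Fourier transform of the complex Gaussian (whose modulus is again Gaussian, with width set by $\alpha/(\alpha^2+4\gamma^2)$) produces $\|u_0\|_{\dot H^{1/2}}^2 = 2\pi\,(p^2/\alpha)\sqrt{1+4(\gamma/\alpha)^2}$.

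I would then pass to the scale-invariant variables $s=\gamma/\alpha$ and $q=p^2/\alpha$, which is legitimate because both $\dot H^{1/2}$ and the condition \eqref{E:L} are invariant under \eqref{E:scaling}. In these variables the argument of $g$ is $\omega = \frac{8EV(0)}{3M^2} = 3(1+4s^2) - \frac{1}{2\sqrt2}q$, while $V_t(0)/M = 12s$ and $\|u_0\|_{\dot H^{1/2}}^2 = 2\pi q\sqrt{1+4s^2}$. Writing $h(\omega)=2\omega^{-1/2}+\omega-3$, one checks that $h$ has a double zero at $\omega=1$ and is therefore nonnegative, so $g$ in \eqref{E:g-def} is real throughout; for $\omega>1$ the criterion \eqref{E:L} reduces to $h(\omega)<12s^2$. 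Eliminating $s$ and $q$ along the boundary $h(\omega)=12s^2$ collapses the $\dot H^{1/2}$ norm to the one-parameter expression
$$
\|u_0\|_{\dot H^{1/2}}^2 = \frac{8\sqrt6}{3}\,\pi\,\sqrt{1+2\omega^{-3/2}},
$$
which decreases in $\omega$ and tends to $\frac{8\sqrt6}{3}\pi$ as $\omega\to+\infty$ (i.e.\ strong phase $s\to-\infty$ together with vanishing amplitude $q\to0$). Thus the infimum of $\|u_0\|_{\dot H^{1/2}}$ over all Gaussian data obeying Lushnikov's condition is $\big(\tfrac{8\sqrt6}{3}\pi\big)^{1/2}$.

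It then suffices to verify the single numerical inequality $\|Q\|_{\dot H^{1/2}}^2 > \frac{8\sqrt6}{3}\pi$: granting it, I would choose $\omega$ large but finite so that the boundary value lies below $\|Q\|_{\dot H^{1/2}}^2$, and then move strictly into the blow-up region, which keeps $\|u_0\|_{\dot H^{1/2}}<\|Q\|_{\dot H^{1/2}}$ by continuity while making \eqref{E:L} strict. The main obstacle is precisely this comparison. Since $Q$ has no closed form in three dimensions, $\|Q\|_{\dot H^{1/2}}$ must be computed numerically, and the margin is not large: interpolation together with \eqref{E:Pohozhaev} (which gives $\|\nabla Q\|_{L^2}^2 = 3\|Q\|_{L^2}^2$) only bounds $\|Q\|_{\dot H^{1/2}}^2 \le \sqrt3\,M[Q]$, so whether the threshold $\frac{8\sqrt6}{3}\pi \approx 20.5$ is cleared depends on where in this window the computed value of $\|Q\|_{\dot H^{1/2}}$ actually falls. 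The underlying tension is structural: enlarging $|\gamma|$ is what drives blow-up through $V_t(0)=8\gamma V(0)$, but it simultaneously inflates both $E$ and $\|u_0\|_{\dot H^{1/2}}$, so the $\dot H^{1/2}$ norm can be kept below threshold only by sending the amplitude to zero in concert.
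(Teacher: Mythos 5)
Your proposal is correct and is essentially the paper's own argument: radial Gaussian data with negative quadratic phase, fed into Lushnikov's criterion \eqref{E:L}, with the conclusion resting on the single piece of numerical information $\|Q\|_{\dot H^{1/2}}^2 \approx 27.72665$. The difference is only in execution: the paper fixes $\gamma=-\frac12$ and compares the tabulated blow-up curve coming from \eqref{E1:Lphase} (values $p_b$ in Table \ref{T1:Lgauss}) against the $\dot H^{1/2}$ threshold curve \eqref{E1:p12-phase} in Figure \ref{F:gaussn}, whereas your scale-invariant reduction in $(s,q)=(\gamma/\alpha,\,p^2/\alpha)$ yields the closed-form infimum $\frac{8\sqrt 6}{3}\pi$ of $\|u_0\|_{\dot H^{1/2}}^2$ over all Gaussians satisfying \eqref{E:L}, which is tidier and turns the verification into one inequality. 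Your closing hedge can simply be discharged: $\frac{8\sqrt 6}{3}\pi \approx 20.52 < 27.72665$, so the threshold is cleared with a comfortable margin, and the residual dependence on that one numerical constant is exactly the caveat the paper itself attaches to this theorem.
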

This follows from Theorems \ref{T:Lushnikov} and
\ref{T:Lushnikov-adapted} by considering certain Gaussian initial
data with negative phase (see Figure \ref{F:gaussn}).  It needs to
be remarked, however, that this theorem relies on one piece of
numerical information -- that value $\|Q\|_{\dot H^{1/2}}^2 =
27.72665$.  It is an analytical result in the sense that one need
not numerically solve the NLS equation.

This analytical result is further supported numerically for a
variety of \emph{nonreal} initial data with the inclusion of
\emph{negative} quadratic phase:  $u_0(x)= \phi(x)e^{i\gamma |x|^2}$
with $\phi$ radial and real-valued and $\gamma<0$.   We note,
however, that we did not observe any \emph{real-valued} initial data
$u_0$ with $\|u_0\|_{\dot H^{1/2}}<\|Q\|_{\dot H^{1/2}}$ evolving
toward finite-time blow-up solutions.  Hence, we pose the following
conjecture:

\smallskip

\noindent\textbf{Conjecture 3}. If the initial data $u_0$ is
real-valued and $\|u_0\|_{\dot H^{1/2}}<\|Q\|_{\dot H^{1/2}}$, then
$u(t)$ scatters as $t\to -\infty$ and $t\to +\infty$.

\smallskip

Assuming Conjecture 3 holds, we elaborate further in Conjecture 4
below.  When working with a one-parameter family of profiles, the
$\dot H^{1/2}$ norm can apparently predict both scattering \emph{and
blow-up} if the profiles are monotonic; this is summarized in our
next conjecture.

\smallskip

\noindent\textbf{Conjecture 4}. Consider a real-valued radial
initial data profile $\psi(r)$ that is strictly decreasing as $r\to
+\infty$.  Let $\alpha_0 = \|Q\|_{\dot H^{1/2}}/\|\psi\|_{\dot
H^{1/2}}$.  Then the solution with initial data $u_0(x)=\alpha
\psi(x)$ scatters if $\alpha<\alpha_0$ and blows-up if
$\alpha>\alpha_0$.

\smallskip

However, if the profile is not monotonic, then the $\dot H^{1/2}$
norm appears to only give a sufficient condition for scattering.
This is illustrated in the simulations for oscillatory Gaussian data
-- see Figure \ref{F:oscgauss}.

\subsection{NLS as a model in physics}
\label{S:physics}

\quad

\emph{1. Laser propagation in a Kerr medium \cite{SS, Fib}}. This
model is inherently two dimensional (in $xy$) and the time $t$ in
fact represents the $z$-direction, and is derived via the
\emph{paraxial approximation} for the Helmholtz equation.  The
nonlinearity arises from the dependence of the index of refraction
on the amplitude of the propagating wave.  Blow-up in finite time is
observed in the laboratory as a sharp focusing of the propagating
wave.  Ultimately, the non-backscattering assumption, and hence the
NLS model, breaks down.

\smallskip

\emph{2. Langmuir turbulence in a weakly magnetized plasma
\cite{Zak72, SS}}.  A plasma is modeled as interpenetrating fluids
of highly excited electrons and positive ions.   The Langmuir waves
propagate through the electron medium.  The principle mathematical
model is the Zakharov system \cite{Zak72}, which is a nonlinearly
coupled Schr\"odinger and wave system.  The Schr\"odinger function
is a slowly varying envelope for the electric potential and the wave
function is the deviation of the ion density from its mean value.
The NLS equation arises as the subsonic limit of the Zakharov
system, which is obtained by sending the wave speed $\to +\infty$.
Blow-up in finite time is the central phenomenon of study in
\cite{Zak72}, since it predicts the formation of a \emph{cavern} of
shrinking radius confining fast oscillating electrons whose
collisions dissipate energy (at which point the model breaks down).
The Zakharov model is inherently 3d, although certain experimental
configurations can be modeled with the 1d or 2d equations.

\smallskip

\emph{3. Bose-Einstein condensate (BEC) \cite{DGPS}}. BEC consists
of ultracold (a few nK) dilute atomic gases where $u$ gives the wave
function (e.g. the number of atoms in a region $E$ is $\int_E
|u|^2$) and the coefficient of the nonlinear term is related to the
\emph{scattering length} by $g=8\pi a$.  The scattering length
depends upon the interatomic potential and can be either positive or
negative.  While the model is inherently 3d, the imposition of a
strong confining potential in one or two directions can effectively
reduce the model to two or one dimensions, respectively. Experiments
showing blow-up are reported for ${}^{85}$Rb condensates in
\cite{Rb-exper} and for ${}^7$Li condensates in \cite{Li-exper}.

\smallskip

In each situation, blow-up is physically observed (although of
course the model breaks down at some point prior to the blow-up
time).

\subsection{Acknowledgements}
S.R. thanks Pavel Lushnikov for bringing to her attention his 1995
paper on the dynamic collapse criteria. J.H. and S.R. are grateful
to Gadi Fibich for discussion and remarks on a preliminary version
of this paper.  S.R. is partially supported by NSF grant
DMS-0808081. J.H. is partially supported by a Sloan fellowship
and NSF grant DMS-0901582.

\section{Inequalities}
\label{S:inequalities}

In this section, we prove the two inequalities \eqref{E:uncertainty}
and \eqref{E:interpolation} needed for Theorems \ref{T:Lushnikov}
and \ref{T:Lushnikov-adapted}.

\subsection{Inequality \eqref{E:uncertainty}}

Of course \eqref{E:uncertainty}, the uncertainty principle, is
standard, although we include a proof for completeness. By
integration by parts,
$$
\|u\|_{L^2}^2 = \frac13 \int (\nabla \cdot x) \; |u|^2 \, dx
= -\frac23 \Re \int (x \cdot \nabla u) \, \bar u \, dx \,.
$$
By Cauchy-Schwarz,
$$
\frac94 \|u\|_{L^2}^4 + \left| \Im \int (x \cdot \nabla u) \, \bar u
\,dx \right|^2 = \left| \int (x\cdot \nabla u) \, \bar u \, dx
\right|^2 \leq \|xu\|_{L^2}^2 \|\nabla u\|_{L^2}^2.
$$
This provides the sharp constant since the inequality is achieved by
only the above application of Cauchy-Schwarz, which is saturated
when $xu = C \nabla u$, i.e., when $u$ is a Gaussian.

\subsection{Inequality \eqref{E:interpolation}}

In this section, we prove the following
\begin{proposition}
\label{P:interp}
The inequality
\begin{equation}
 \label{E:interp1}
\|u\|_{L^2} \leq C\|xu\|_{L^2}^\frac37\|u\|_{L^4}^\frac47
\end{equation}
holds with sharp constant $C = \left( \frac{2^2\cdot 7^5 \cdot
\pi^2}{3^5 \cdot 5^2} \right)^{1/14} \approx 1.3983$.  Moreover, all
functions for which equality is achieved are of the form $\beta
\phi(\alpha x)$, where
$$
\phi(x) =
\begin{cases}
(1-|x|^2)^{1/2} & \text{if } 0 \leq |x|\leq 1 \\
0 & \text{if } |x|> 1 .
\end{cases}
$$
\end{proposition}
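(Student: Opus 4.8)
The plan is to recast \eqref{E:interp1} as an extremal problem for the scale-invariant functional
$$W(u) = \frac{\|u\|_{L^2}^7}{\|xu\|_{L^2}^3\,\|u\|_{L^4}^4},$$
so that the sharp constant is $C = (\sup_u W(u))^{1/7}$; a direct check shows $W(\beta\,\phi(\alpha\,\cdot)) = W(\phi)$ for all $\beta\in\cC$ and $\alpha>0$ (each of $\|u\|_{L^2}^7$ and $\|xu\|_{L^2}^3\|u\|_{L^4}^4$ scales like $|\beta|^7\alpha^{-21/2}$ in $\cR^3$), which is exactly the invariance reflected in the stated family of extremizers. First I would reduce to radial, nonnegative, decreasing $u$: the factors $\|u\|_{L^2}$ and $\|u\|_{L^4}$ depend only on $|u|$ and are invariant under symmetric-decreasing rearrangement $u\mapsto u^*$, while $\|xu\|_{L^2}^2 = \int |x|^2|u|^2$ can only decrease under rearrangement because $|x|^2$ is radially increasing (Hardy--Littlewood). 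Since $\|xu\|_{L^2}$ sits in the denominator, $W(u^*)\ge W(u)$, so it suffices to maximize over radial decreasing $u\ge 0$.

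Boundedness of $W$ (hence finiteness of $C$) I would establish by an elementary splitting that does not require the sharp constant: for radial decreasing $u$ and any $\rho>0$, Cauchy--Schwarz gives $\int_{|x|<\rho}u^2 \le |B_\rho|^{1/2}\|u\|_{L^4}^2$, while $\int_{|x|>\rho}u^2 \le \rho^{-2}\|xu\|_{L^2}^2$; optimizing the sum over $\rho$ yields $\|u\|_{L^2}^2 \lesssim \|xu\|_{L^2}^{6/7}\|u\|_{L^4}^{8/7}$, i.e. $W<\infty$. With finiteness in hand, the heart of the argument is existence of a maximizer. I would take a maximizing sequence of radial decreasing $u_n\ge 0$ normalized by $\|xu_n\|_{L^2}=\|u_n\|_{L^4}=1$. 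Monotonicity lets Helly's selection theorem extract a subsequence converging pointwise a.e. to a radial decreasing $u$. The $L^4$ normalization forces the uniform pointwise bound $u_n(r)\le c\,r^{-3/4}$ and makes the mass near the origin uniformly small ($\int_{|x|<\epsilon}u_n^2 \lesssim \epsilon^{3/2}$), while $\|xu_n\|_{L^2}=1$ makes the mass at infinity uniformly small ($\int_{|x|>R}u_n^2\le R^{-2}$); dominated convergence on annuli then upgrades pointwise convergence to $u_n\to u$ in $L^2$. Thus $\|u\|_{L^2}=\lim\|u_n\|_{L^2}$, while Fatou preserves $\|xu\|_{L^2}\le 1$ and $\|u\|_{L^4}\le 1$, so $u$ attains the supremum. \textbf{This compactness step is the step I expect to be the main obstacle}, since one must simultaneously rule out concentration at the origin and escape of mass to infinity for a sequence lying only in a weighted $L^2$ and in $L^4$ (rather than in a Sobolev space where Strauss-type compactness is available).

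Given a maximizer $u$, I would derive the Euler--Lagrange equation by varying $\int u^2$ subject to $\int|x|^2u^2$ and $\int u^4$ held fixed: Lagrange multipliers produce $u = \mu_1|x|^2 u + \mu_2 u^3$ on the set $\{u>0\}$. Dividing by $u$ shows $u^2$ is an \emph{affine} function of $|x|^2$, and since $u$ is radial, nonnegative, and decreasing, the support must be a ball and the profile is pinned down to $u = c\,(R^2-|x|^2)^{1/2}$ on $B_R$ (vanishing outside), which is precisely $\beta\,\phi(\alpha x)$ after rescaling. Evaluating $W$ at $\phi$ via the elementary integrals $\|\phi\|_{L^2}^2=\tfrac{8\pi}{15}$, $\|x\phi\|_{L^2}^2=\tfrac{8\pi}{35}$, $\|\phi\|_{L^4}^4=\tfrac{32\pi}{105}$ gives $C^7 = W(\phi) = \tfrac{2\cdot 7^{5/2}\,\pi}{3^{5/2}\cdot 5}$, whose square is exactly $\tfrac{2^2\cdot 7^5\cdot\pi^2}{3^5\cdot 5^2}$, confirming the stated value of $C$.

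Finally, for the equality characterization among \emph{all} admissible $u$ (not assumed radial a priori), I would trace equality back through each reduction: equality in the rearrangement bound for $\int|x|^2|u|^2$ forces (by the strict form of the Hardy--Littlewood inequality) $|u|$ to be a symmetric-decreasing function centered at the origin, and then the Euler--Lagrange analysis forces it to be $c\,(R^2-|x|^2)^{1/2}_+$; allowing a constant phase absorbs into $\beta$. This yields that equality holds exactly for $u=\beta\,\phi(\alpha x)$, completing the proof.
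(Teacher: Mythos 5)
Your route is essentially the paper's: reduce to radial nonincreasing functions by rearrangement, produce an extremizer by the direct method (pointwise bounds from the two normalized norms plus Helly-type selection, exactly as in the paper's Lemma \ref{L:minimizer}), then identify the profile through the Euler--Lagrange equation. The compactness step you flag as the main obstacle is indeed where the paper spends its effort, and your treatment of it, as well as your final arithmetic ($C^{14}=\tfrac{2^2\cdot 7^5\pi^2}{3^5\cdot 5^2}$), is sound. The genuine gap is in the Euler--Lagrange endgame. From $u=\mu_1|x|^2u+\mu_2u^3$ you may only conclude that $u^2$ is affine in $|x|^2$ \emph{on the set} $\{u>0\}$; the step ``since $u$ is radial, nonnegative, and decreasing, the profile is pinned down to $u=c(R^2-|x|^2)^{1/2}$ on $B_R$ (vanishing outside)'' does not follow. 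The truncated caps $u=c(R^2-|x|^2)^{1/2}\mathbf{1}_{\{|x|\le\rho\}}$ with $0<\rho<R$ are also radial, nonnegative, nonincreasing, have ball support, have $u^2$ affine in $|x|^2$ where positive, and satisfy the multiplier equation pointwise (trivially where $u=0$); yet they are not of the form $\beta\phi(\alpha x)$ --- they jump to zero at $|x|=\rho$. Nothing in your argument excludes them, and a priori one of them could beat the full cap, in which case both your value of $C$ and the equality characterization would be wrong. This is precisely why the paper's Lemma \ref{L:Euler-Lagrange} only concludes $\phi=\beta\phi_\gamma(\alpha x)$ for some $0<\gamma\le 1$ as in \eqref{E:phi-form}, and why it then carries out the explicit (``tedious'') minimization of $L(\phi_\gamma)$ over $\gamma$ to show that $\gamma=1$ is the unique optimum.

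The gap can be closed in two ways. One is the paper's: compute $\|\phi_\gamma\|_{L^2}$, $\|x\phi_\gamma\|_{L^2}$, $\|\phi_\gamma\|_{L^4}$ as functions of $\gamma$ and verify the Lagrangian is optimized only at $\gamma=1$. The other stays closer to your variational setup: in the unconstrained form of criticality, $\frac{7u}{\|u\|_{L^2}^2}=\frac{3|x|^2u}{\|xu\|_{L^2}^2}+\frac{4u^3}{\|u\|_{L^4}^4}$, the multipliers are not free parameters but are determined by $u$ itself, so matching the constant and the $|x|^2$ coefficients on $\{u>0\}$ forces the two identities
\begin{equation*}
7\|u\|_{L^4}^4=4\|u\|_{L^2}^2 \qquad\text{and}\qquad 3\|u\|_{L^4}^4=4\|xu\|_{L^2}^2 ,
\end{equation*}
and both are scale-independent constraints on the truncation parameter alone. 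A short computation with $c=R=1$ gives $7\|\phi_\gamma\|_{L^4}^4-4\|\phi_\gamma\|_{L^2}^2=4\pi\gamma^3(1-\gamma^2)^2$, which vanishes only at $\gamma=1$; so no truncated cap is a critical point. Either way, the computation over the one-parameter family that you skipped is exactly what is needed to finish the proof.
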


First we prove there exists \emph{some} constant $C$ for which
\eqref{E:interp1} holds. Given $u$ such that $\|xu\|_{L^2}<\infty$
and $\|u\|_{L^4}<\infty$, define $v$ by $u(x)=\alpha v(\beta x)$
with $\alpha$ and $\beta$ chosen so that $\|xv\|_{L^2}=1$ and
$\|v\|_{L^4}=1$, i.e.,
$$
\alpha = \|u\|_{L^4}^\frac{10}7 \|xu\|_{L^2}^{-\frac37}\,, \qquad
\beta = \|u\|_{L^4}^{\frac47} \|xu\|_{L^2}^{-\frac47} \,.
$$
By H\"older,
\begin{align*}
\|v\|_{L^2}^2
& = \|v\|_{L^2{(|x|\leq 1)}}^2 + \|v\|_{L^2{(|x|\geq 1})}^2\\
& \leq \left(\frac43\,\pi\right)^{1/2} \|v\|_{L^4({|x|\leq 1})}^2
+ \|xv\|_{L^2({|x|\geq 1})}^2 \\
& \leq \left(\frac43\,\pi\right)^{1/2}+1,
\end{align*}
which completes the proof of \eqref{E:interp1} with nonsharp
constant $C= ( (\frac43\pi)^{1/2}+1)^{1/2}\approx 1.7455$.

\begin{remark}
We can optimize the above splitting argument by splitting at radius
$r= \left( \frac{4}{3\pi} \right)^{1/7}$, which gives the constant
$C\approx 1.7265$.  However, this is still off from the sharp
constant $C\approx 1.3983$ stated in Prop. \ref{P:interp}.  The
reason is that the estimates applied after the splitting lead to an
optimizing function which is the characteristic function of the ball
of radius $r$.  However, such a function fails to have both
$\|u\|_{L^4}=1$ and $\|xu\|_{L^2}=1$.
\end{remark}

We now proceed to identify the sharp constant $C$ in
\eqref{E:interp1} and the family of optimizing functions.  Consider
the Lagrangian
$$
L(\phi) = \frac{\|x\phi\|_{L^2}^\frac37
\|\phi\|_{L^4}^\frac47}{\|\phi\|_{L^2}}\,,
$$
defined on $X\defeq L^4\cap L^2(\la x \ra^2 dx)$, $x \in \cR^3$.

\begin{lemma} \quad
 \label{L:minimizer}
\begin{enumerate}
\item
Any minimizer $\phi$ (without loss of generality taken real-valued
and nonnegative) of $L$ in $X$ is radial and (nonstrictly)
decreasing.
\item
There exists a minimizer $\phi$ of $L$ in $X$.
\end{enumerate}
\end{lemma}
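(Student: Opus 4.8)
The plan is to minimize $L$, since $1/\inf_X L$ is exactly the sharp constant $C$ in \eqref{E:interp1}: the scale invariance of $L$ under $\phi(x)\mapsto \beta\phi(\alpha x)$ lets me normalize freely, and a minimizer of $L$ is precisely an optimizer of \eqref{E:interp1}. The two parts of the lemma then feed the identification of $C$ and the optimizing family in Proposition \ref{P:interp}.

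For part (1), I would use symmetric decreasing (Schwarz) rearrangement. Writing $\phi^*$ for the rearrangement of a nonnegative $\phi\in X$, equimeasurability of $\phi$ and $\phi^*$ gives $\|\phi^*\|_{L^2}=\|\phi\|_{L^2}$ and $\|\phi^*\|_{L^4}=\|\phi\|_{L^4}$, so both the denominator of $L$ and the $L^4$ factor in the numerator are unchanged. The only term that can move is $\|x\phi\|_{L^2}^2=\int|x|^2|\phi|^2\,dx$. Since the weight $|x|^2$ is radially \emph{increasing}, the bathtub principle (equivalently the layer-cake/Hardy--Littlewood inequality applied level set by level set) gives $\int|x|^2|\phi^*|^2\,dx\le\int|x|^2|\phi|^2\,dx$, hence $L(\phi^*)\le L(\phi)$. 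Therefore any minimizer satisfies $L(\phi^*)=L(\phi)$, and the equality case of the weighted rearrangement inequality, with the strictly increasing, origin-centered weight $|x|^2$, forces $\phi=\phi^*$ a.e., i.e.\ $\phi$ is radial and (nonstrictly) decreasing.

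For part (2), existence, I would run the direct method on a minimizing sequence $\{\phi_n\}$, which by scaling I normalize so that $\|x\phi_n\|_{L^2}=1$ and $\|\phi_n\|_{L^4}=1$; then $L(\phi_n)=1/\|\phi_n\|_{L^2}\to\inf_X L$, i.e.\ $\|\phi_n\|_{L^2}\to\sup=1/\inf_X L$. By part (1) I may take each $\phi_n$ radial and decreasing. Two a priori bounds drive the compactness. First, the $L^4$ bound yields a uniform pointwise decay for radial decreasing functions: integrating over the central ball, $1=\|\phi_n\|_{L^4}^4\ge\frac{4}{3}\pi r^3\phi_n(r)^4$, so $\phi_n(r)\le(\frac{3}{4\pi})^{1/4}r^{-3/4}$. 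Second, the weight constraint controls the tail, $\int_{|x|\ge R}|\phi_n|^2\,dx\le R^{-2}\|x\phi_n\|_{L^2}^2=R^{-2}$, preventing mass from escaping to infinity. Using monotonicity I extract, via Helly's selection theorem, a subsequence converging pointwise a.e.\ to a radial decreasing limit $\phi$. The envelope $r^{-3/4}$ is square integrable on any ball $|x|\le R$, so dominated convergence gives $L^2$ convergence there, while the tail estimate (with Fatou for $\phi$) makes the exterior contribution uniformly small; hence $\phi_n\to\phi$ strongly in $L^2$ and $\|\phi\|_{L^2}=\sup$. Weak lower semicontinuity of $\|\cdot\|_{L^4}$ and of $\|x\cdot\|_{L^2}$ gives $\|\phi\|_{L^4}\le1$ and $\|x\phi\|_{L^2}\le1$, so $L(\phi)\le 1/\sup=\inf_X L$, forcing equality with $\phi\neq0$; thus $\phi$ is a minimizer.

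The main obstacle is the compactness in part (2): a priori a minimizing sequence can lose mass either to spatial infinity or through concentration, and a bare weak $L^2$ limit only yields the wrong inequality $\|\phi\|_{L^2}\le\liminf\|\phi_n\|_{L^2}$. The two structural bounds resolve exactly these two failure modes: the weight $\|x\phi_n\|_{L^2}$ rules out escape to infinity, and the $L^4$-driven pointwise bound $\phi_n(r)\lesssim r^{-3/4}$, available only \emph{after} the radial-decreasing reduction of part (1), both supplies a local dominating function and rules out concentration. In this sense the symmetrization of part (1) is precisely what unlocks the existence proof in part (2).
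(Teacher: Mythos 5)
Your proof is correct and takes essentially the same route as the paper: symmetric decreasing rearrangement with the bathtub principle applied level set by level set for part (1), and for part (2) the direct method on a radial decreasing minimizing sequence, with pointwise bounds from the $L^4$ and variance constraints, pointwise convergence by monotonicity (the paper hand-rolls the dyadic argument that your appeal to Helly's selection theorem packages), and Fatou/dominated convergence to pass to the limit. The only differences are cosmetic: your normalization $\|x\phi_n\|_{L^2}=\|\phi_n\|_{L^4}=1$ versus the paper's $\|\phi_n\|_{L^2}=\|\phi_n\|_{L^4}=1$, and your interior-DCT-plus-uniform-tail splitting versus the paper's single global dominating function $r^{-3/4}(1+r)^{-7/4}$.
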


\begin{remark}
Lemma \ref{L:minimizer} does not say that a minimizer $\phi$ needs
to be continuous, \emph{strictly} decreasing, or compactly
supported. We only discover this to be the case in the next step of
the proof.
\end{remark}

\begin{proof}
We first argue that any $\phi\in X$ can be replaced by a radial,
monotonically (perhaps not strictly) decreasing function $\tilde
\phi$ such that
\begin{equation}
 \label{E:Lp-norms-same}
\|\tilde \phi \|_{L^2} = \|\phi \|_{L^2} \,, \quad \|\tilde \phi
\|_{L^4} = \|\phi \|_{L^4} \,,
\end{equation}
\begin{equation}
 \label{E:var-dec}
\|x\tilde \phi \|_{L^2} \leq \|x\phi \|_{L^2} \,.
\end{equation}
Moreover, we have equality in \eqref{E:var-dec} if and only if
$\tilde \phi = \phi$, which occurs if and only if $\phi$ itself is
radial, nonnegative, and (nonstrictly) decreasing. Given $\phi$, let
$$
E_{\phi, \lambda} = \{ \, x \, | \, |\phi(x)|> \lambda \, \} \,,
\qquad \mu_\phi (\lambda) = |E_{\phi,\lambda}|.
$$
Recall that (by Fubini's theorem)
\begin{equation}
 \label{E:Lp-dist}
\|\phi \|_{L^p}^p = \int_{\lambda=0}^{+\infty} p\lambda^{p-1}
\mu_\phi(\lambda) \, d\lambda \,.
\end{equation}
Note that $\mu_\phi: [0,+\infty) \to [0,+\infty)$ is a nonstrictly
decreasing right-continuous function.  Thus,
$$
\lim_{\sigma\searrow \lambda} \mu_\phi(\lambda) = \mu_\phi(\lambda)
\leq \lim_{\sigma\nearrow \lambda} \mu_\phi(\sigma)
$$
with equality if and only if $\lambda$ is a point of continuity.
Since $\mu_\phi$ is \emph{nonstrictly} decreasing, there may be
intervals on which $\mu_\phi$ is constant that we need to exclude in
order to achieve invertibility. Let $F_\phi$ be the set of all $r$
such that $\mu_\phi ^{-1}(\{\frac43 \pi r^3\})$ has positive measure
(is a nontrivial interval).  Note that $F_\phi$ is an at most
countable set (because $\mu_\phi$ is nonstrictly decreasing). For
all $r\notin F$, define the radial function $\tilde \phi$ on
$\mathbb{R}^3$ by
$$
\tilde \phi(r) = \lambda \quad \text{ iff } \quad \mu_\phi(\lambda)
\leq \frac43\pi r^3 \leq \lim_{\sigma \nearrow \lambda} \mu_\phi
(\sigma) \,.
$$
Note that $\mu_\phi = \mu_{\tilde \phi}$, i.e., $\phi$ and $\tilde
\phi$ are equidistributed.  By \eqref{E:Lp-dist}, we have
\eqref{E:Lp-norms-same}.  Let
$$
h_\phi(\lambda) \defeq \int_{E_{\phi,\lambda}} |x|^2 \,dx.
$$
By Fubini,
$$
\|x\phi\|_{L^2}^2 = 2\int_{\lambda=0}^{+\infty}   \lambda
h_\phi(\lambda) \, d\lambda \,,
$$
and similarly for $\tilde \phi$. For each $\lambda$, we have
$h_{\tilde \phi}(\lambda) \leq h_\phi(\lambda)$, since $|E_{\tilde
\phi,\lambda}|=|E_{\phi,\lambda}|$ but $E_{\tilde\phi ,\lambda}$ is
uniformly positioned around the origin (it is a ball centered at
$0$).  Consequently, \eqref{E:var-dec} holds.

We note that the above argument establishes (1) in the theorem
statement.  To prove (2), we need to construct a minimizer by a
limiting argument.  Let
$$
m  \defeq \inf_{\phi \in X} L(\phi) \,.
$$
Let $\phi_n$ be a minimizing sequence.  By approximation and the
above argument, we can assume that each $\phi_n$ is continuous,
compactly supported, radial, nonnegative, and nonstrictly
decreasing.  By scaling ($\phi_n(x) \mapsto \alpha\phi_n(\beta x)$)
we can also assume that $\|\phi_n\|_{L^2}=1$ and
$\|\phi_n\|_{L^4}=1$.  We have $\|x\phi_n\|_{L^2} \leq
\|x\phi_1\|_{L^2} \defeq A$.  Since for each $n$, the function
$\phi_n(r)$ is decreasing in $r$, we have
$$
A^2 \geq \|x\phi_n\|_{L^2{(|x|\leq r)}}^2 = \int_{\rho=0}^r 4\pi
\rho^4 |\phi_n(\rho)|^2 \,d \rho \geq |\phi_n(r)|^2 \frac{4\pi
r^5}{5} \,,
$$
i.e., $|\phi_n(r)| \lesssim r^{-5/2}$. Similarly, working with the
fact that $\|\phi_n\|_{L^4} =1$, we have that for all $n$,
$|\phi_n(r)| \lesssim r^{-3/4} \,.$ Combining these two pointwise
bounds, we have
\begin{equation}
 \label{E:ptwise}
|\phi_n(r)| \lesssim r^{-3/4}(1+r)^{-7/4} \,,
\end{equation}
with implicit constant uniform in $n$. Thus, for each $r>0$, the
sequence of nonnegative numbers $\{ \phi_n(r) \}_{n=1}^{+\infty}$ is
bounded and hence has a convergent subsequence.  By a diagonal
argument, we can pass to a subsequence of $\phi_n$ (still labeled
$\phi_n$) such that for each $r\in \mathbb{Q}$, we have that
$\lim_{n\to +\infty} \phi_n(r)$ exists. Denote by $\phi(r)$ the
limiting function (for now only defined on $\mathbb{Q}$).

We claim that $\phi_n$ converges pointwise a.e.\footnote{Note that
$\phi_n$ is a sequence of functions each of which is decreasing, but
it is not the case that for each $r$, the sequence of numbers
$\phi_n(r)$ is decreasing.  Thus, proving that $\phi_n(r)$ converges
for a.e. $r>0$ is a little more subtle.}  Pass to a subsequence
(still labeled $\phi_n$) such that for each $r\in 2^{-n}\mathbb{N}$,
$\frac1n < r < n$, we have $|\phi_n(r)-\phi(r)| \leq 2^{-2n}$.  Let
$$
\phi_n^-(r) \defeq \phi((j+1)2^{-n})-2^{-2n} \text{ for }j2^{-n}< r<
(j+1)2^{-n},
$$
$$
\phi_n^+(r) \defeq \phi(j 2^{-n})+2^{-2n} \text{ for } j2^{-n}< r<
(j+1)2^{-n}.
$$
Then for each $n$,
$$
\phi_n^-(r) \leq \phi_n(r) \leq \phi_n^+(r)
$$
and $\phi_n^-$ is a pointwise nonstrictly increasing sequence of
functions, while $\phi_n^+$ is pointwise nonstrictly decreasing
sequence of functions. Since $\phi$ is a decreasing function,
$$
\sum_{j=2^{n-\log n}}^{n2^n} (\phi(j2^{-n}) - \phi((j+1)2^{-n})) =
\phi(1/n) - \phi(n) \lesssim n^{3/4}
$$
by \eqref{E:ptwise}. Thus, there can exists at most $n^{3/4}2^{n/2}$
indices $j$ for which the jump $\phi(j2^{-n})-\phi((j+1)2^{-n}) \geq
2^{-n/2}$. Thus, the measure of the set $H_n$ on which $\phi_n^+(r)
- \phi_n^-(r) > 2^{-n/2}$ satisfies $|H_n| \leq n^{3/4}2^{n/2}2^{-n}
\leq n^{3/4}2^{-n/2}$. This establishes that for a.e. $r$, $\phi(r)
\defeq \lim_{n\to +\infty}\phi_n(r)$ exists and moreover for a.e.
$r$,
$$
\lim_{n\to +\infty} \phi_n^-(r) = \phi(r)= \lim_{n\to +\infty}
\phi_n^+(r).
$$
We know that $\| x\phi_n^-(x) \|_{L^2{(\frac1n\leq r \leq n)}} \leq
\|x\phi_n\|_{L^2}$.  By monotone convergence, we conclude that
$$
\|x\phi(x)\|_{L^2} = \lim_{n\to +\infty} \| x\phi_n^-(x)
\|_{L^2{(\frac1n\leq r \leq n)}} \leq \lim_{n\to +\infty}
\|x\phi_n\|_{L^2}= m^{7/3}.
$$
Similarly, we have that
$$
\|\phi\|_{L^4} \leq 1 \,.
$$
By \eqref{E:ptwise} and dominated convergence, we conclude that
$$
\|\phi\|_{L^2} = \lim_{n\to +\infty} \|\phi_n\|_{L^2} = 1 \,.
$$
Hence, $L(\phi) \leq m$ (and thus, $L(\phi)=m$), i.e., $\phi\in X$
is a minimizer.
\end{proof}

\begin{lemma}
 \label{L:Euler-Lagrange}
If $\phi\in X$, $\phi$ is radial, nonnegative, (nonstrictly)
decreasing, and solves $L'(\phi)=0$, then there exist $\alpha > 0$,
$\beta> 0$, $0 < \gamma \leq 1$ such that $\phi(x)=\beta
\phi_\gamma(\alpha x)$, where
\begin{equation}
 \label{E:phi-form}
\phi_\gamma (x) =
\begin{cases}
(1-|x|^2)^{1/2} & \text{for }|x| \leq \gamma \\
0 &  \text{for }|x|> \gamma.
\end{cases}
\end{equation}
\end{lemma}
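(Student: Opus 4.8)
The plan is to derive the Euler--Lagrange equation satisfied by $\phi$ and to read off the stated form directly from it. To linearize the scale-homogeneous ratio $L$, I would first pass to its logarithm. Writing $A=\|x\phi\|_{L^2}^2$, $B=\|\phi\|_{L^4}^4$, and $D=\|\phi\|_{L^2}^2$ (all finite and strictly positive, since $\phi\in X$ and $\phi\not\equiv 0$), we have
$$
\log L(\phi) = \tfrac{3}{14}\log A + \tfrac17\log B - \tfrac12\log D \,.
$$
For any test direction $h\in C_c^\infty(\mathbb{R}^3)\subset X$, the map $t\mapsto \log L(\phi+th)$ is differentiable at $t=0$: each of $A,B,D$ is a polynomial in $t$ whose coefficients (such as $\int |x|^2\phi h$, $\int \phi^3 h$, $\int\phi h$) are finite by H\"older and the membership $\phi\in L^4\cap L^2(\langle x\rangle^2\,dx)$, and the outer logarithm is smooth where $A,B,D>0$. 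Differentiating under the integral and using $L'(\phi)=0$ then yields
$$
\int \left( \tfrac37\,\frac{|x|^2\phi}{A} + \tfrac47\,\frac{\phi^3}{B} - \frac{\phi}{D} \right) h \, dx = 0 \qquad \text{for all } h \,.
$$

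Since the parenthesized function lies in $L^1_{\mathrm{loc}}$ and the test functions are dense, the fundamental lemma of the calculus of variations forces
$$
\phi \left( \tfrac37\,\frac{|x|^2}{A} + \tfrac47\,\frac{\phi^2}{B} - \frac{1}{D} \right) = 0 \qquad \text{a.e.}
$$
Hence at a.e.\ point either $\phi=0$ or $\phi^2 = \tfrac{7B}{4D} - \tfrac{3B}{4A}|x|^2 =: a - b|x|^2$, with $a,b>0$. Here I would invoke the hypothesis that $\phi$ is radial, nonnegative, and (nonstrictly) decreasing: this guarantees that $\{\phi>0\}$ equals a ball $\{|x|<R\}$ up to a null set, so that on this ball $\phi(x)=\sqrt{a-b|x|^2}$ exactly. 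Nonnegativity of $\phi^2$ throughout the ball forces $bR^2\le a$, which in particular rules out $R=\infty$.

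It then remains to match parameters. Setting $\beta=\sqrt a$, $\alpha=\sqrt{b/a}$, and $\gamma=\alpha R$, one checks that $\phi(x)=\beta\sqrt{1-|\alpha x|^2}$ for $|x|\le R$ and $\phi(x)=0$ otherwise, that is $\phi(x)=\beta\,\phi_\gamma(\alpha x)$ in the notation of \eqref{E:phi-form}. The constraint $bR^2\le a$ is precisely $\gamma=\alpha R\le 1$, while $R>0$ gives $\gamma>0$, establishing $0<\gamma\le 1$.

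I expect the main obstacle to be the rigorous justification of the variational step rather than the algebra: because Lemma \ref{L:minimizer} does not yet provide continuity of $\phi$, I must verify that the G\^ateaux derivative of $L$ exists and that differentiation passes under the integral sign using only $\phi\in L^4\cap L^2(\langle x\rangle^2\,dx)$, and that the fundamental lemma applies to the merely locally integrable coefficient. A secondary point requiring care is confirming that the critical-point relation, which a priori holds only on the support $\{\phi>0\}$, combined with the monotonicity hypothesis pins down the global profile and excludes spurious possibilities such as an unbounded support or a support with holes.
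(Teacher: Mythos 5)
Your proof is correct and follows essentially the same route as the paper: logarithmic differentiation of $L$, the Euler--Lagrange identity $\frac{3|x|^2\phi}{7\|x\phi\|_{L^2}^2}+\frac{4}{7}\frac{\phi^3}{\|\phi\|_{L^4}^4}-\frac{\phi}{\|\phi\|_{L^2}^2}=0$ on the set where $\phi\neq 0$, and then the radial/decreasing hypothesis to identify the support as a ball and pin down $0<\gamma\leq 1$. The only difference is that you spell out the justifications (G\^ateaux differentiability of the ratio, the fundamental lemma applied to a locally integrable coefficient, and the exclusion of unbounded support) that the paper's terse proof leaves implicit.
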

\begin{proof}
Writing
$$
\log L(\phi) = \frac3{14}\log \|x\phi\|_{L^2}^2 + \frac17 \log
\|\phi\|_{L^4}^4 - \frac12\|\phi\|_{L^2}^2,
$$
it follows that for each $\psi$,
$$
0 = L'(\phi)(\psi) = L(\phi) \int \left(
\frac{3|x|^2\phi}{7\|x\phi\|_{L^2}^2} + \frac{4}{7}
\frac{\phi^3}{\|\phi\|_{L^4}^4} - \frac{\phi}{\|\phi\|_{L^2}^2}
\right) \psi \, dx \,.
$$
From this, we see that it must have the form
\begin{equation}
\label{E:phi-form-prelim}
\phi(x) = \beta(1-\alpha^2 |x|^2)^{1/2}\,.
\end{equation}
on the set where $\phi(x)\neq 0$.  Since $\phi(x)$ is decreasing, we
see that \eqref{E:phi-form-prelim} holds on $0\leq |x|\leq
\gamma/\alpha$ for some $0<\gamma\leq 1$, and $\phi(x)=0$ for
$|x|>\gamma/\alpha$.
\end{proof}

Let $\phi \in X$ be a minimizer for $L(\phi)$ as in Lemma
\ref{L:minimizer}.  We know that $\phi$ must solve the
Euler-Lagrange equation $L'(\phi)=0$, and hence, Lemma
\ref{L:Euler-Lagrange} is applicable, which establishes that
$\phi(x) = \beta\phi_\gamma(\alpha x)$ for some $\gamma$,  $0<
\gamma \leq 1$, where $\phi_\gamma$ is given by \eqref{E:phi-form}.
We now plug $\phi(x)= \beta \phi_\gamma(\alpha x)$ into $L(\phi)$ to
determine $\gamma$.  However, since $L(\phi)$ is invariant under the
rescaling $\phi(x) \mapsto \beta\phi(\alpha x)$, it suffices to take
$\alpha=1$, $\beta=1$ in this computation.  We compute
$$
\|\phi \|_{L^4}^4 = 4\pi  \left( \frac13 - \frac{2\gamma^2}{5} +
\frac{\gamma^4}{7} \right) \,, \quad \|x\phi \|_{L^2}^2 = 4\pi
\left( \frac15 - \frac{\gamma^2}{7} \right) \,,
$$
$$
\|\phi\|_{L^2}^2 =4\pi \left( \frac13 - \frac{\gamma^2}{5}\right)\,,
$$
and we thus obtain
$$
L(\phi)^{14}= \frac{ \left( \frac13 - \frac{2\gamma^2}{5} +
\frac{\gamma^4}{7} \right)^2 \left( \frac15 - \frac{\gamma^2}{7}
\right)^3 }{ (4\pi)^2 \left( \frac13 - \frac{\gamma^2}{5}
\right)^{7}} \,.
$$
A tedious computation shows that $\gamma =1$
produces the minimum value, which is
$$
L(\phi)^{14} = \frac{ 3^5 \cdot 5^2}{2^2\cdot 7^5 \cdot \pi^2} \,.
$$
This completes the proof of Prop. \ref{P:interp}.

\section{New blow-up criteria}
\label{S:Lushnikov}

In this section, we prove Theorem \ref{T:Lushnikov},
\ref{T:Lushnikov-adapted}, and \ref{T:Lushnikov-radial}.

\subsection{The blow up criteria of Lushnikov}
\label{S:L}

Here, we prove Theorem \ref{T:Lushnikov}.   It is an
adaptation and further investigation of the dynamic criterion for
collapse proposed by P. Lushnikov in \cite{Lu95}.

We write $M=M[u]$, $E=E[u]$, etc., for simplicity. By the virial
identity \eqref{E:virial} in the case $n=3$,
\begin{equation}
 \label{E:4}
V_{tt}(t) = 24 E - 4 \|\grad u(t)\|^2_2,
\end{equation}
and the bound \eqref{E:uncertainty}, we obtain
\begin{equation}
 \label{E:5}
V_{tt}(t) \leq 24 E - 9 \frac{M^2}{V(t)} - \frac14
\frac{|V_t(t)|^2}{V(t)}.
\end{equation}

Now, rewritting the equation \eqref{E:5} to remove the last term
with $V_t^2$ by making a substitution $\ds  V=B^{4/5}$, we get
\begin{equation}
 \label{E:6}
B_{tt} \leq 30 E B^{1/5} - \frac{45}4 \frac{M^2}{B^{3/5}}.
\end{equation}
This differential inequality is an equality with some unknown
non-negative quantity:
\begin{equation}
 \label{E:7}
B_{tt} \leq 30 E B^{1/5} - \frac{45}4 \frac{M^2}{B^{3/5}}
-g^2(t).
\end{equation}

The equation \eqref{E:7} is the key in further analysis and in
\cite{Lu95} is called the {\it dynamic criterion for collapse}.
To analyze this equation, Lushnikov proposes to use a mechanical
analogy of a particle moving in a field with a potential barrier.
Let $B=B(t)$ be the position of a particle (with mass 1) in motion
under 2 forces:
$$
B_{tt} = F_1 + F_2,
$$
where
\begin{equation}
 \label{E:potential}
F_1 = -\frac{\partial U}{\partial B} \quad \text{with the potential}
\quad U = - 25 E B^{6/5} + \frac{225}{8} {M^2}{B^{2/5}},
\qquad \qquad
\end{equation}
and
$$
F_2 = -g^2(t), \quad \text{some unknown force which pulls the
particle towards zero}.
$$

If this particle reaches the origin in a finite period of time,
$$
B(t^*)=0 \quad \text{for some} \quad 0<t^*<\infty,
$$
then collapse necessarily occurs at some time $t \leq t^*$.

Several observations are due:
\begin{itemize}
\item
If the particle reaches the origin without the force $-g^2(t)$
(i.e., if $B(t_1)=0$ %
some $0 < t_1 < \infty$), then it also reaches the origin in the
situation when this force is applied ($B(t_2)=0$ %
for some $0 < t_2 \leq t_1$).

\item
If $E<0$, then $B$ always reaches
the origin and collapse always happens. Thus, the more interesting
case to consider is $E>0$.

\item
The potential $U$ in \eqref{E:potential} as a function of $B$ is
a convex function (at least for positive $B$) with the maximum
attained at
$$
B_{max} = \left(\frac38 \frac{M^2}{E} \right)^{5/4} \quad \text{and}
\quad U_{max} = U(B_{max}) = \frac{75 \sqrt 3}{8 \sqrt 2} \,
\frac{M^3}{E^{1/2}}.
$$
\end{itemize}

Define the ``energy'' of the particle $B$:
\begin{equation}
\label{E:mech-energy}
\mathcal{E}(t) = \frac{B_t(t)^2}{2} + U(B(t)),
\end{equation}
which is time dependent due to the term $g(t)^2$ in \eqref{E:7}.
However, recall that for our purposes it is
sufficient for $B$ to reach the origin if $B$ satisfies only
$B_{tt}=F_1$ (see the first observation above), for which the energy
$\mathcal{E}(t)$ is conserved.

The analysis is facilitated if we introduce the rescaled variables
$\tilde B(s)$ and $\tilde{\mathcal{E}}(s)$, where
$$
B(t) = B_{max}\, \tilde B\left( \frac{16 E t}{\sqrt 3 M}\right) \,,
\quad \mathcal{E}(t) = U_{max} \, \tilde{\mathcal{E}}\left( \frac{16
E \, t}{\sqrt 3 M} \right) \,, \quad s=\frac{16E}{\sqrt 3 M} \,t.
$$
We obtain
$$
\tilde B_{ss} \leq \frac{15}{16}\left(\tilde B^{1/5} - \frac1{\tilde
B^{3/5}} \right).
$$
If we set
$$
\tilde U(\tilde B) \defeq  - \frac12 \tilde B^{6/5} + \frac32 \tilde
B^{2/5} \,,
$$
then \eqref{E:mech-energy} converts to
$$
\tilde{\mathcal{E}}(s) = \frac{8}{25} \tilde B_s(s)^2 + \tilde
U(\tilde B(s)) \,.$$

\begin{figure}
\includegraphics[scale=0.7]{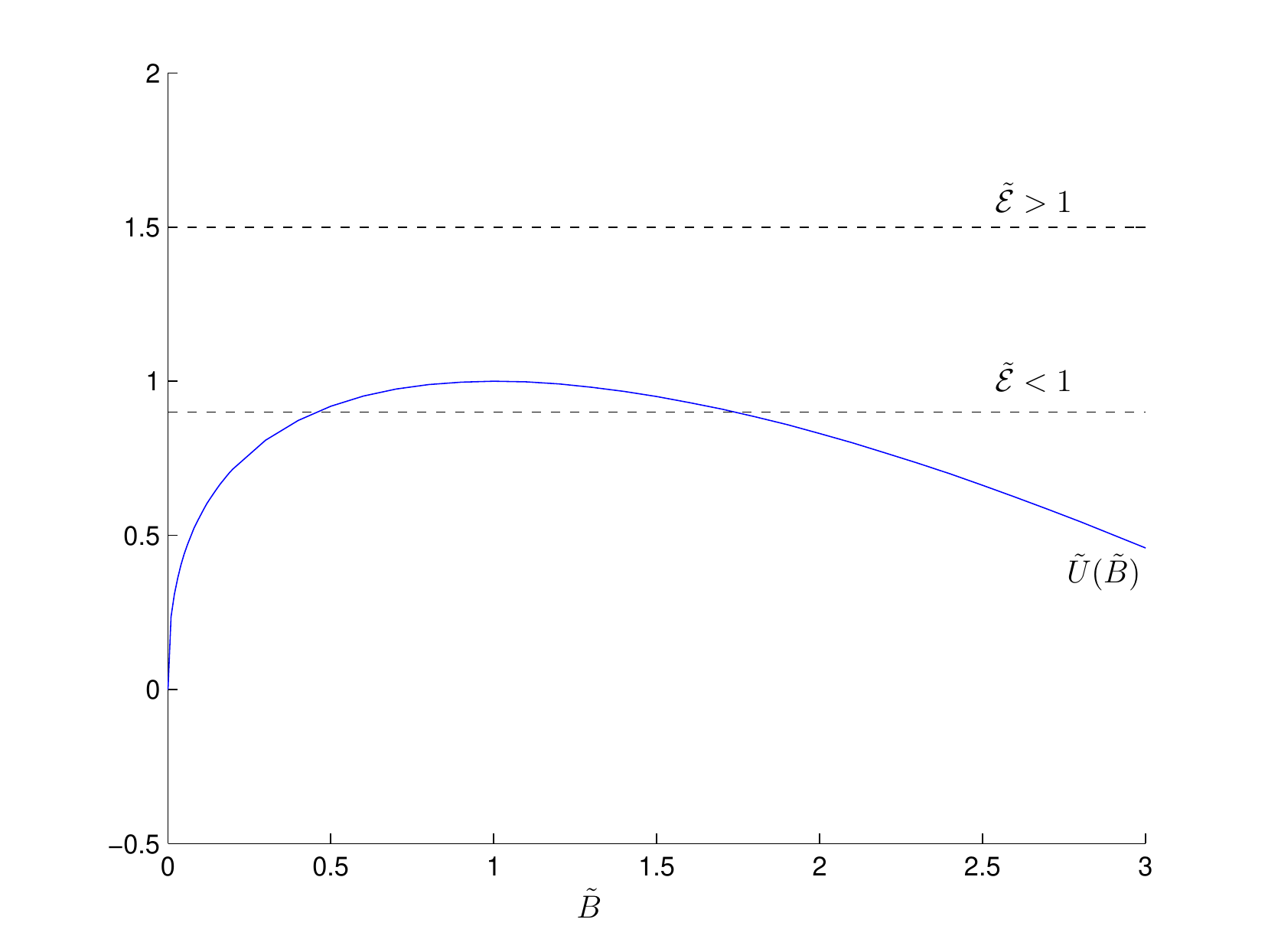}
\caption{A depiction of the rescaled potential $\tilde U$ as a
function of position $\tilde B$. In the case $\tilde{\mathcal{E}} <
1$, a particle starting at $\tilde B(0)<1$ is trapped in that region
and moves to the origin in finite time.  In the case
$\tilde{\mathcal{E}} > 1$, a particle with initial velocity $\tilde
B_s(0)<1$ has sufficient energy to reach the origin in finite time,
regardless of its initial position $\tilde B(0)$.}
 \label{F:mechanical}
\end{figure}

The potential $\tilde U(\tilde B)$ is depicted in Figure
\ref{F:mechanical}.  From this energy diagram, we can identify two
sufficient conditions under which $\tilde B(s)$ necessarily reaches
$0$ in finite time:
\begin{enumerate}
\item
$\tilde{\mathcal{E}}(0) < 1$ and $\tilde B(0)<1$. In this case, the
value of $\tilde B_s(0)$ does not matter.

\item
$\tilde{\mathcal{E}}(0) \geq 1$ and $\tilde B_s(0)<0$. In this case,
the value of $\tilde B(0)$ does not matter.
\end{enumerate}
Now define $\tilde V = \tilde B^{4/5}$.  Then
$$
\tilde{\mathcal{E}} = \frac12 \tilde V^{1/2}( \tilde V_s^2 - \tilde
V + 3) \,.
$$
Introduce the function
\begin{equation}
\label{E:f-def}
f(\tilde V) = \sqrt{ \frac{2}{\tilde V^{\frac12}} + \tilde V - 3} \,.
\end{equation}
We obtain
$$
\tilde{\mathcal{E}} <1  \quad \Leftrightarrow \quad |\tilde V_s| <
f(\tilde V)
$$
$$
\tilde{\mathcal{E}} \geq 1  \quad \Leftrightarrow \quad |\tilde V_s|
\geq f(\tilde V)
$$
Thus, we see that sufficient condition (1) for blow-up above equates
to
$$
\tilde V(0)<1 \text{ and } -f(\tilde V(0))<\tilde V_s(0) < f(\tilde V(0))
$$
and condition (2) equates to
$$
\tilde V_s(0) \geq - f(\tilde V(0)).
$$
This is graphed in Figure \ref{F:Vs-versus-V}.

\begin{figure}
\includegraphics[scale=0.7]{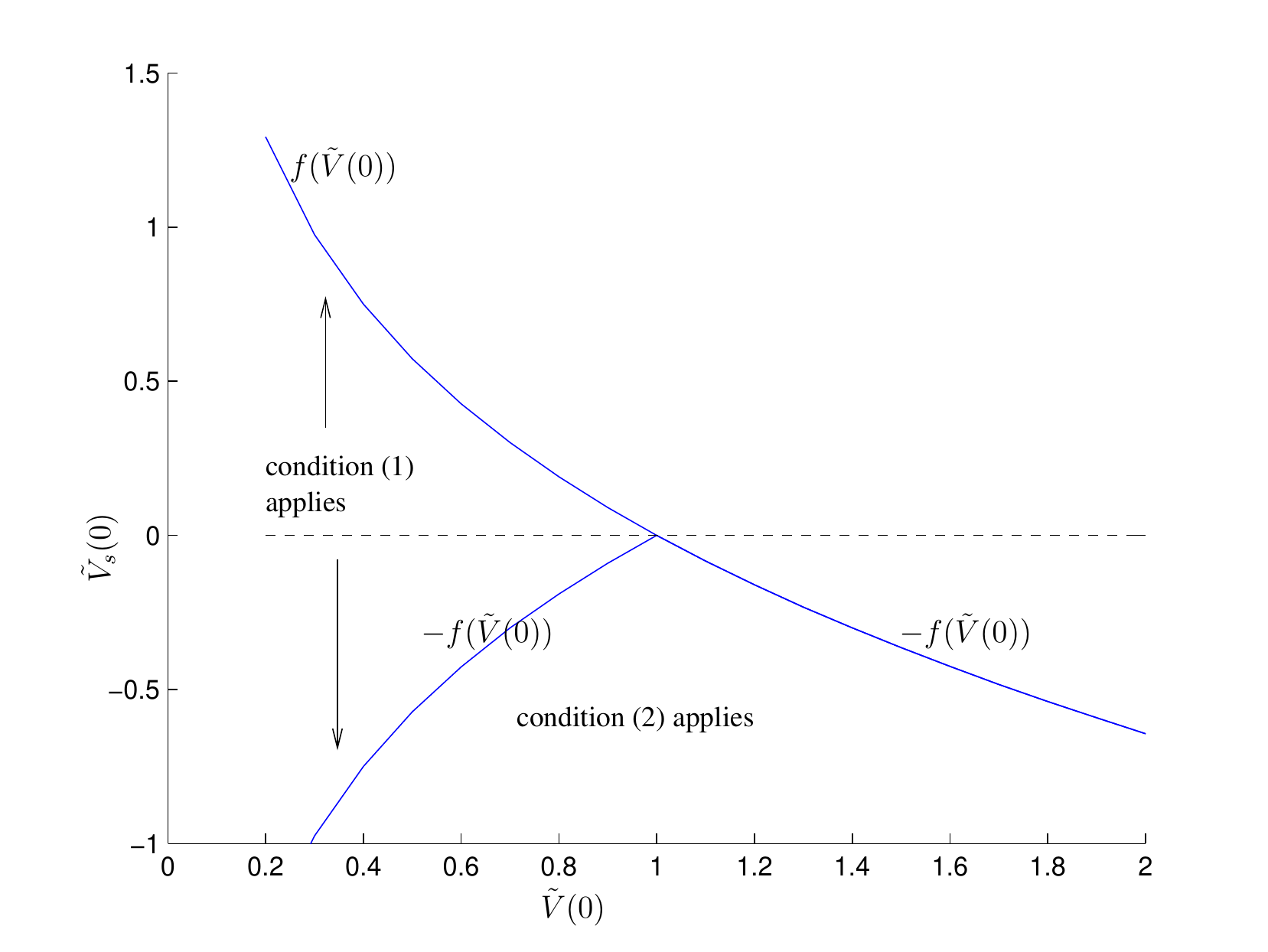}
\caption{The two sufficient conditions (1) and (2) described in the
text \S \ref{S:L} for blow-up in finite time convert to the
conditions on $\tilde V_s(0)$ in terms of $V(0)$ depicted in this
figure. }
 \label{F:Vs-versus-V}
\end{figure}

The two separate conditions can be merged into one:  the solution
blows-up in finite time if
$$
\tilde V_s(0)<
\begin{cases}
+ f(\tilde V(0)) & \text{if } \tilde V(0)\leq 1\\
-f(\tilde V(0)) & \text{if } \tilde V(0)\geq 1
\end{cases}
$$
Tracing back through the rescalings, we see that the relationship
with $V(t)$ and $\tilde V(s)$ is
$$
V(t) = B_{max}^{4/5} \, \tilde V \left( \frac{16 E \, t}{\sqrt 3 M}
\right),
$$
which completes the proof of Theorem \ref{T:Lushnikov}.

\subsection{An adaptation}
\label{S:Lushnikov-adapted}

In this subsection, we prove Theorem \ref{T:Lushnikov-adapted}. The
analysis here is similar to that used above in the proof of Theorem
\ref{T:Lushnikov}, except that we use \eqref{E:interpolation} in
place of \eqref{E:uncertainty}.

By energy conservation, we can rewrite the virial identity as
\begin{equation}
\label{E:supp1}
\begin{aligned}
V_{tt} &= 24 E - 4\|\nabla u\|_{L^2}^2 \\
&= 16 E - 2\|u\|_{L^4}^4.
\end{aligned}
\end{equation}
By \eqref{E:interpolation},
\begin{equation}
 \label{E:anotherVtt}
V_{tt} \leq 16 E - \frac{2M^\frac72}{C^7 V^\frac32}.
\end{equation}
Let
$$
U(V) = -16EV - \frac{4M^\frac72}{C^7 V^\frac12}.
$$
Then, as in the previous subsection, we can interpret $V(t)$ as
giving the position of a particle subject to a conservative force
$-\partial_V U(V)$ plus another unknown nonconservative force
pulling $V(t)$ toward $0$.  The corresponding mechanical energy is
\begin{equation}
\label{E:me} \mathcal{E}(t) = \frac12 V_t^2 + U(V).
\end{equation}
Restricting to the case $E>0$, we compute that $U(V)$ achieves its
maximum $U_{max}$ at $V_{max}$, with
$$
V_{max} = \frac{M^\frac73}{4 C^\frac{14}3 E^\frac23}, \qquad U_{max}
= \frac{-12 M^\frac73 E^\frac13}{C^\frac{14}{3}} .
$$
To facilitate the rest of the analysis, we introduce a rescaling.
Define $\tilde V(s)$ and $\tilde{\mathcal{E}}(s)$ by the relations
$$
V(t) = V_{max}\tilde V(\alpha t) \,, \quad \mathcal{E}(t) =
|U_{max}| \, \tilde{\mathcal{E}}(\alpha t) \,, \quad \alpha =
\frac{8\sqrt 3 C^\frac{7}{3}E^\frac56}{M^\frac76}, \quad s=\alpha t.
$$
Then
$$
\tilde V_{ss}(s) \leq \frac13 \left( 1- {\tilde V^{-3/2}(s)} \right)
$$
and \eqref{E:me} equates to
$$
\tilde{\mathcal{E}} = \frac12 \tilde V_s^2 -\frac13 \tilde V -
\frac23 \tilde V^{-\frac12} \,.
$$
From this, we identify two sufficient conditions for blow-up in finite time.
\begin{enumerate}
\item
$\tilde{\mathcal{E}}(0)<-1$ and $\tilde V(0)<1$.
\item
$\tilde{\mathcal{E}}(0)\geq -1$ and $\tilde V_s(0)<0$.
\end{enumerate}
Let $f(\tilde V)$ be defined by \eqref{E:f-def}.
Then
$$
\tilde{\mathcal{E}}< -1 \quad \Leftrightarrow \quad |\tilde V_s| <
\sqrt{\frac23} f(\tilde V),
$$
$$
\tilde{\mathcal{E}} \geq -1 \quad \Leftrightarrow \quad |\tilde V_s|
\geq \sqrt{\frac23}f(\tilde V).
$$
Thus, condition (1) above holds if and only if
$$
\tilde V(0)<1 \quad  \text{and} \quad -\sqrt{\frac23}\, f(\tilde
V(0))<\tilde V_s(0)<\sqrt{\frac23} \, f(\tilde V(0))
$$
and condition (2) holds if and only if
$$
\tilde V_s(0) \leq -\sqrt{\frac23}\, f(\tilde V(0)).
$$
Clearly, we can merge the two conditions into one: we have blow-up
in finite time provided
$$
\tilde V_s(0) < \sqrt{\frac23}
\begin{cases}
f( \tilde V(0)) & \text{if }\tilde V(0)\leq 1, \\
-f(\tilde V(0)) & \text{if }\tilde V(0)\geq 1.
\end{cases}
$$
Substituting back $V(t)$, we obtain with $\omega = 4 C^{14/3}
E^{2/3} M^{-7/3}V(0)$
$$
\frac{V_t(0)}{M}\, \frac{C^{7/3}}{2 \sqrt 3 (ME)^{1/6}} <
\sqrt{\frac23}
\begin{cases}
f(\omega) & \text{if } \omega \leq 1 \\
-f(\omega) & \text{if } \omega \geq 1,
\end{cases}
$$
which gives \eqref{E:LA} and finishes the proof of Theorem
\ref{T:Lushnikov-adapted}.

\subsection{Infinite variance radial case}

Let $\psi(x)$ be a smooth, radial, (nonstrictly) increasing function
such that
$$
\psi(x)=
\begin{cases}
|x|^2 & \text{if }|x|\leq 1 \\
2 & \text{if }|x|\geq 2
\end{cases}
$$
Let $V_R$ denote the \emph{localized variance}, defined in
\eqref{E:lv-def}. Note that $V_R/M\leq 2R^2$.  We need to replace
inequality \eqref{E:interpolation} with a localized version.

\begin{lemma}
\label{L:interp-radial}
Suppose that $V_R/M \leq \frac12 R^2$.  Then
$$
\|u\|_{L^2} \leq \left( \frac{2^\frac{11}{2} \pi}{3}
\right)^\frac{1}{7} \|u\|_{L_x^4}^{\frac47}V_R^\frac3{14} \,.$$
\end{lemma}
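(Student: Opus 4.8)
The plan is to imitate the elementary splitting argument that produced the non-sharp constant in Proposition \ref{P:interp}, but with the weight $|x|^2$ replaced by the localized weight $R^2\psi(x/R)$ defining $V_R$ in \eqref{E:lv-def}. The structural fact I would exploit is that, since $\psi(x)=|x|^2$ for $|x|\le 1$ and $\psi$ is nonstrictly increasing with $\psi(x)=2$ for $|x|\ge 2$, the weight $w(x)\defeq R^2\psi(x/R)$ satisfies $w(x)=|x|^2$ for $|x|\le R$ and $w(x)\ge R^2$ for $|x|\ge R$. Consequently, for any cutoff radius $r$ with $0<r\le R$ one has $w(x)\ge r^2$ on the entire exterior region $\{|x|\ge r\}$. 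First I would split $\|u\|_{L^2}^2=\int_{|x|\le r}|u|^2\,dx+\int_{|x|\ge r}|u|^2\,dx$.

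For the interior piece I would apply H\"older exactly as in the proof of Proposition \ref{P:interp} to obtain $\int_{|x|\le r}|u|^2\,dx\le (\tfrac43\pi r^3)^{1/2}\|u\|_{L^4}^2$. For the exterior piece, the weight bound $w\ge r^2$ on $\{|x|\ge r\}$ gives $\int_{|x|\ge r}|u|^2\,dx\le r^{-2}\int w|u|^2\,dx=r^{-2}V_R$. Writing $A\defeq (\tfrac43\pi)^{1/2}\|u\|_{L^4}^2$, this yields the master inequality
$$\|u\|_{L^2}^2\le A\,r^{3/2}+\frac{V_R}{r^2},\qquad 0<r\le R. \quad (*)$$
Rather than minimizing the right-hand side of $(*)$ exactly (which gives a marginally smaller constant at the cost of messier bookkeeping), I would simply balance the two terms by choosing $r_*=(V_R/A)^{2/7}$, so that $A r_*^{3/2}=V_R/r_*^2=A^{4/7}V_R^{3/7}$ and hence $\|u\|_{L^2}^2\le 2A^{4/7}V_R^{3/7}$. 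Substituting the value of $A$ and taking a square root produces $\|u\|_{L^2}\le 2^{1/2}(\tfrac43\pi)^{1/7}\|u\|_{L^4}^{4/7}V_R^{3/14}$, and a direct computation shows $2^{1/2}(\tfrac43\pi)^{1/7}=(\tfrac{2^{11/2}\pi}{3})^{1/7}$, which is exactly the asserted constant.

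The one genuine obstacle is that $(*)$ is valid only for $r\le R$, whereas the balancing radius $r_*$ need not satisfy $r_*\le R$; this is precisely where the hypothesis $V_R/M\le \tfrac12 R^2$ must enter. If $r_*\le R$, the computation above applies verbatim. If instead $r_*>R$, I would take $r=R$ in $(*)$ and absorb the exterior term using the hypothesis in the form $V_R/R^2\le \tfrac12\|u\|_{L^2}^2$: this gives $\|u\|_{L^2}^2\le AR^{3/2}+\tfrac12\|u\|_{L^2}^2$, hence $\|u\|_{L^2}^2\le 2AR^{3/2}$; moreover $r_*>R$ forces $R^{3/2}\le (V_R/A)^{3/7}$, so that once again $\|u\|_{L^2}^2\le 2A^{4/7}V_R^{3/7}$. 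Thus both cases yield the same bound, and the factor $2$ in the final constant is exactly what the $\tfrac12$ in the hypothesis purchases. I expect the verification of the weight inequality $w\ge r^2$ on $\{|x|\ge r\}$ and the closing of the case $r_*>R$ to be the only steps requiring genuine care; everything else is a routine repetition of the Proposition \ref{P:interp} splitting.
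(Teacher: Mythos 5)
Your proposal is correct and is essentially the paper's own argument: the same splitting at a radius $r\le R$, H\"older on the interior ball, and the weight bound $R^2\psi(x/R)\ge r^2$ on the exterior region, with the hypothesis $V_R/M\le\tfrac12 R^2$ entering to control the exterior term. The only difference is the choice of radius: the paper sets $r^2=2V_R/M$ at the outset (admissible precisely because of the hypothesis, which makes the exterior term exactly $M/2$ and lets it be absorbed into the left side), thereby reaching the same bound $\|u\|_{L^2}^2\le 2A^{4/7}V_R^{3/7}$ without your case distinction between $r_*\le R$ and $r_*>R$.
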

\begin{proof}
Let $r^2 = 2V_R/M$ so that, by assumption, we have $r^2\leq R^2$.  Then
\begin{align*}
M = \|u\|_{L^2}^2
& = \|u\|_{L^2({|x|\leq r})}^2 + \|u\|_{L^2({|x|\geq r})}^2 \\
& \leq \left(\frac43 \,\pi r^3\right)^{1/2} \|u\|_{L^4({|x|\leq
r})}^2 + \frac1{r^2}V_R \\
& \leq \left(\frac43 \, \pi r^3\right)^{1/2} \|u\|_{L^4({|x|\leq
r})}^2 + \frac{M}2 ,
\end{align*}
where we note that the inequality $\|u\|_{L^2({|x|\geq r})}^2 \leq
\frac1{r^2}V_R$ requires $r^2\leq R^2$.  The result follows.
\end{proof}
We have not made any effort to identify the sharp constant here.

Now we turn to the proof of Theorem \ref{T:Lushnikov-radial}. By
direct calculation, we have the local virial identity:
$$
V_R''(t) = 4 \int \partial_j \partial_k \psi\left( \frac{x}{R}
\right) \, \partial_ju \, \partial_k \bar u  - \int \Delta
\psi\left( \frac{x}{R} \right) |u|^4 - \frac{1}{R^2} \int \Delta^2
\psi\left( \frac{x}{R} \right) |u|^2 \,.
$$
Note that
$$
V_R''(t) = 16E  - 2\|u(t)\|_{L_x^4}^4 + A_R(u(t)) \,,$$
where
$$
A_R(u(t)) \lesssim \frac{1}{R^2} \|u\|_{L^2({|x|\geq R})}^2  +
\|u\|_{L^4({|x|\geq R})}^4\,.
$$
Recall that we are assuming $ME> 1$.  Then, if we take
\begin{equation}
 \label{E:R-def}
R^2 \gtrsim  \delta^{-1}M^2 \,,
\end{equation}
we have
$$
\frac{1}{R^2} \|u\|_{L^2({|x|\geq R})}^2 \leq  \frac{\delta}{M} \leq
\delta E \,.
$$
Also, by the radial Gagliardo-Nirenberg inequality,
\begin{align*}
\|u\|_{L^4({|x|\geq R})}^4
&\leq \frac1{2\pi R^2} \|u\|_{L_x^2}^3 \|\nabla u\|_{L_x^2} \\
&\leq \frac{\delta}{2\pi} \frac{\|\nabla u\|_{L^2}}{\|u\|_{L^2}}\\
&\leq \frac{\delta}{2\pi} \|\nabla u\|_{L^2} E^{1/2}, \qquad \text{since }ME> 1\\
&\leq \frac14\,\delta E + \frac14\,\delta \|\nabla u\|_{L^2}^2\\
&\leq \delta E + \delta \|u\|_{L^4}^4.
\end{align*}
Hence,
$$
V_R''(t) \leq (16+2\delta)E-(2-\delta)\|u\|_{L^4}^4 \,.
$$
Now we follow the analysis in \S\ref{S:Lushnikov-adapted} but we
must ensure that for all $t$,
\begin{equation}
 \label{E:bootstrap}
\frac{V_R(t)}{M} \leq \frac12 R^2 .
\end{equation}
In fact, \eqref{E:bootstrap} will act as a bootstrap assumption that
will be reinforced by the mechanical analysis.  Suppose that
\eqref{E:bootstrap} holds, and thus, Lemma \ref{L:interp-radial} is
applicable.  Then
$$
V_R'' \leq (16+2\delta)E - (2-\delta)M^{7/2}(C_\infty)^{-7}
V_R^{-3/2}, \quad \text{where} \quad C_\infty = \left(\frac{2^{11/2}
\pi}{3}\right)^{1/7}.
$$
Let
$$
U(V_R) = -(16+2\delta)EV_R - 2(2-2\delta)(C_\infty)^{-7} M^{7/2}
V_R^{-1/2} \,,
$$
and define the mechanical energy as
$$
\mathcal{E} = \frac12 (V_R')^2 + U(V_R) \,.
$$
The maximum of $U(V_R)$ occurs at $V_{max}$ and is equal to
$U_{max}$, where
$$
V_{max} = \frac{(1-\delta)^\frac23 M^\frac73}{(8+\delta)^\frac23
(C_\infty)^\frac{14}{3}E^\frac23} \,, \qquad U_{max}= - \frac{ 6
(8+\delta)^\frac13 (1-\delta)^\frac23 M^\frac73
E^\frac13}{(C_\infty)^\frac{14}{3}} \,.
$$
We introduce a rescaling: Define $\tilde V(s)$ and
$\tilde{\mathcal{E}}(s)$ by the relations
$$
V(t) = V_{max}\tilde V(\alpha t) \,, \qquad \mathcal{E}(t) =
|U_{max}| \,\tilde{\mathcal{E}}(\alpha t) \,, \qquad \alpha =
\frac{6^\frac12 (8+\delta)^\frac56 (C_\infty)^\frac{7}{3}
E^\frac56}{(1-\delta)^\frac13 M^\frac76}.
$$
Then
$$
\tilde{\mathcal{E}} = \frac12 \tilde V_s^2 +\tilde U(\tilde V) \,,
\qquad \tilde U(\tilde V) \defeq  -\frac13 \tilde V - \frac23 \tilde
V^{-\frac12} \,.
$$
The maximum of $\tilde U(\tilde V)$ now occurs at $\tilde V =1$.
The bootstrap assumption \eqref{E:bootstrap} equates to
\begin{equation}
 \label{E:bootstrap-rescaled}
\tilde V_R(s) \leq \frac{R^2 (8+\delta)^\frac23
(C_\infty)^\frac{14}3 E^\frac23}{2 (1-\delta)^\frac23 M^\frac43} =:
\tilde V_{b} \,,
\end{equation}
where we have defined the right-hand side as $\tilde V_b$, the
``bootstrap threshold''. But by \eqref{E:R-def} and the assumption
$ME>1$, we have
$$
\tilde V_b \geq \frac{
(8+\delta)^\frac23}{(1-\delta)^{\frac23}\delta} \,.
$$
We thus have $\tilde V_b \gg 1$, provided $\delta$ is taken
sufficiently small.\footnote{The smallness on $\delta$ here does not
depend on $M$, $E$, etc.  It depends only on $\psi(x)$, the weight
appearing in the local virial identity.}

\begin{figure}
\includegraphics[scale=0.7]{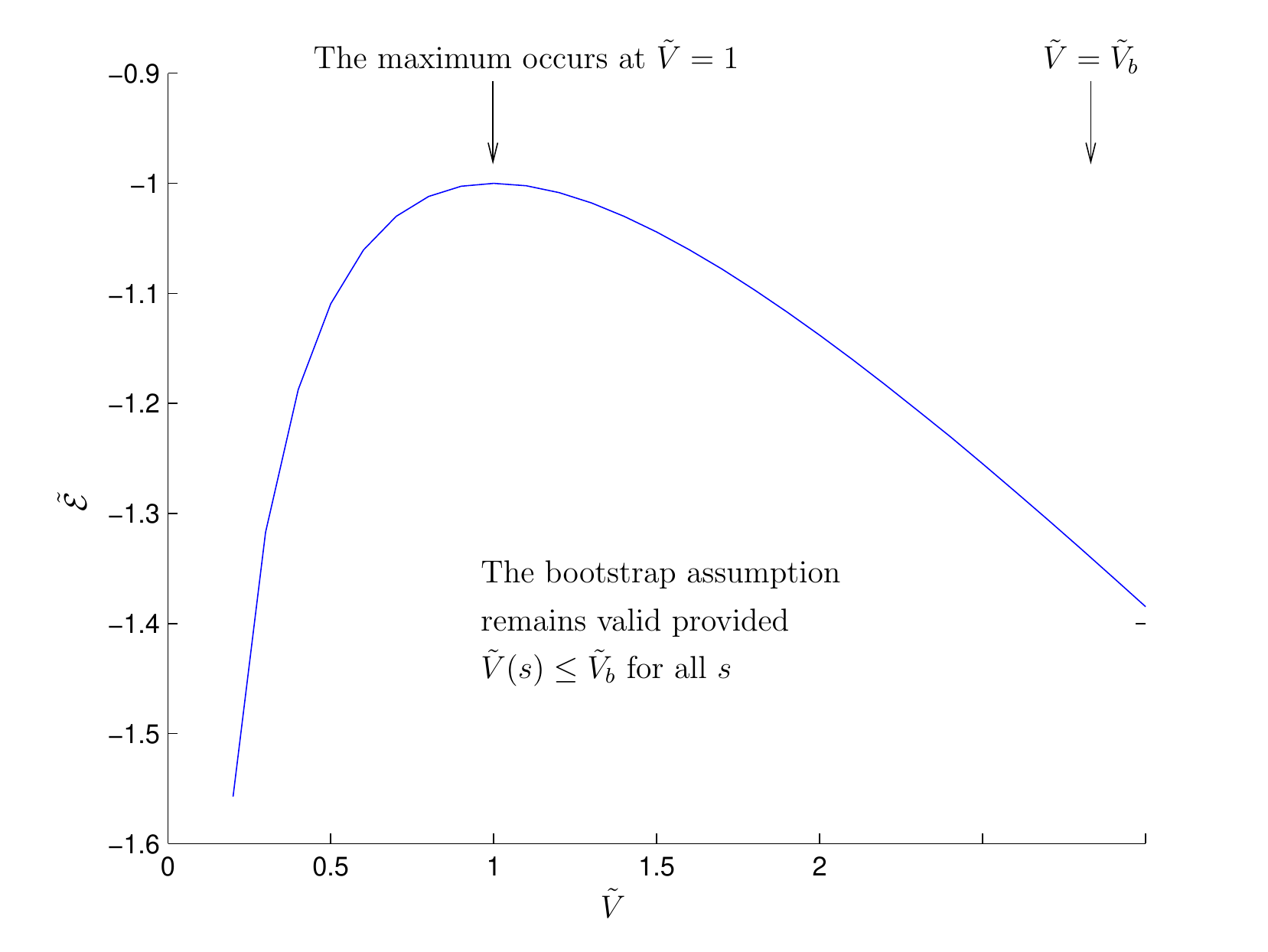}
\caption{A depiction of $\tilde U(\tilde V)$, with $\tilde V_b$
indicated. Note that $\tilde V(0) \ll \tilde V_b$ and always $\tilde
V_b \gg 1$.} \label{F:radial-mech}
\end{figure}

The (rescaled) potential $\tilde U(\tilde V)$, and $\tilde V_b$ are
depicted in Figure \ref{F:radial-mech}.  From this, we identify two
sufficient conditions for blow-up in finite time, noting that in
each case, if \eqref{E:bootstrap-rescaled} holds initially, then it
will hold for all times:
\begin{enumerate}
\item $\tilde{\mathcal{E}}(0)<-1$ and $\tilde V(0)<1$.
\item $\tilde{\mathcal{E}}(0)\geq -1$ and $\tilde V_s(0)<0$.
\end{enumerate}
The remainder of the analysis is the same as in the previous
section, and thus, Theorem \ref{T:Lushnikov-radial} is established.

Finally, we present the proof of Corollary \ref{C:conceptual}.

\begin{proof}[Proof of Cor. \ref{C:conceptual} assuming Theorem \ref{T:Lushnikov-radial}]
Take $R= \mu M\delta^{-1/2}$, where $\mu$ is the constant in the
second equation in \eqref{E:lv-initial}.  Since $u_0$ is real,
\eqref{E:Linf} converts to the statement
\begin{equation}
\label{E:concep}
(ME)^\frac23 \frac{V_R(0)}{M^3} = \frac{E^\frac23V_R(0)}{M^\frac73} \ll 1
\end{equation}
(see the graph of $g$ in Fig. \ref{F:g-graph}.)  Decompose
\begin{align*}
\frac{V_R(0)}{MR^2} 
&= 
\begin{aligned}[t]
&\frac1M \int_{|x|\leq \delta^{1/2}
M(ME)^{-1/3}} \psi(x/R) |u_0|^2 \, dx \\
&+ \frac1M\int_{|x|\geq
\delta^{1/2} M(ME)^{-1/3}} \psi(x/R) |u_0|^2 \, dx \\
\end{aligned}\\
&=\text{I}+\text{II} \,.
\end{align*}
For I, note that, when $|x|\leq \delta^{1/2} M(ME)^{-1/3} \leq R$,
$$
\psi(x/R) = |x|^2/R^2 = |x|^2\delta/\mu^2 M^2 \leq
\delta^2(ME)^{-2/3}\mu^{-2},
$$
and thus, $|\text{I}| \leq \delta^2(ME)^{-2/3}\mu^{-2}$.  For II,
just use $|\psi(x/R)|\lesssim 1$ and the assumption
\eqref{E:mass-dist} to obtain $|\text{II}| \leq
\delta^2(ME)^{-2/3}$. Hence,
$$
\frac{V_R(0)}{MR^2} \leq \delta^2(ME)^{-2/3} \,.
$$
Thus, the first condition in \eqref{E:lv-initial} is satisfied, and
moreover,
$$
\frac{V_R(0)}{M^3} = \frac{V_R(0) \mu^2}{MR^2\delta} \leq \mu^2 \delta (ME)^{-2/3} \,.
$$
Therefore, \eqref{E:concep} holds provided $\delta$ is sufficiently
small.
\end{proof}

\section{Preliminaries for the profile analyses}
\label{S:prelim}

In the sections that follow
(\S\ref{S:sechprofile}--\ref{S:osc-Gaussian}), we consider several
different initial data families.  Here we record some facts needed.

\subsection{$\dot H^{1/2}$ norm}
For radially symmetric data $u_0$, the Fourier transform can be expressed as
\begin{align}
\notag
\hat{u}_0(R)
& = 2 \, R^{-1} \int_0^\infty u_0(r) \, \sin(2\pi R r) \, r \, dr\\
\label{E:Fourier-radial-Bessel}
&= 2 \pi \, R^{-1/2} \int_0^{\infty}
u_0(r) \, J_{1/2}(2 \pi R r) \, r^{3/2} \, dr \,,
\end{align}
where $J_{1/2}$ is the Bessel function of index $\frac12$ given by
$$
J_{1/2}(2\pi Rr) = \frac{\sin (2\pi Rr)}{\pi \sqrt{Rr}} \,.
$$
We can then obtain
\begin{equation}
 \label{E:H12norm-radial}
\|u_0\|^2_{\dot{H}^{1/2}(\cR^3)} = 8 \,\pi^2 \, \int_0^{\infty}
 R^3 \, |\hat{u}_0(R)|^2 \, dR.
\end{equation}

\subsection{Properties of $Q$}
Recall that the Pohozhaev identities \eqref{E:Pohozhaev} hold, which
in the case $n=3$ take the form
\begin{equation}
 \label{E:Pohozhaev3d}
\|\nabla Q\|_{L^2}^2 = 3\|Q\|_{L^2}^2 \quad \text{and} \quad
\|Q\|_{L^4}^4 = 4\|Q\|_{L^2}^2.
\end{equation}
It then follows that $E[Q] = \frac12 M[Q]$ or $\frac16 \|\grad
Q\|^2_2$.

We computed numerically:
\begin{align*}
\|Q\|^2_{L^2(\cR^3)} & = 4 \pi \, \int_0^{\infty} Q^2(r) \,  r^2 \,
dr \approx 18.94,\\
\| y Q\|^2_{L^2(\cR^3)} & = 4 \pi \, \int_0^{\infty} Q^2(r) \, r^4
\, dr \approx 20.32,\\
\|Q\|_{\dot H^{1/2}(\cR^3)}^2 & = 8 \,\pi^2 \, \int_0^{\infty} \! \,
|\hat{Q}(R)|^{\,2} R^3\, dR \approx 27.72665 \,.
\end{align*}

\subsection{Reformulation of blow up conditions}
\label{S:reformulation}

For real valued initial data, Theorem \ref{T:Lushnikov} condition
\eqref{E:L} can be simplified to
\begin{equation}
 \label{E:Lreal}
V(0) < \frac38 \frac{M^2}{E}. \hspace{3cm}
\end{equation}
Similarly, the condition \eqref{E:LA} from Theorem
\ref{T:Lushnikov-adapted} can be simplified to
\begin{equation}
 \label{E:LAreal}
\hspace{3cm} V(0) < c \, \frac{M^{7/3}}{E^{2/3}} \quad
\text{with}\quad c=\frac1{4\,C^{14/3}} \quad \text{and} \quad C
\quad \text{from} ~~ \eqref{E:LA}.
\end{equation}
Observe that the second condition \eqref{E:Lreal} is an improvement
over \eqref{E:LAreal} for real valued initial data when
$$
M[u]E[u] > \frac{7^5 \pi^2}{450}%
\, M[Q]E[Q] \approx 2.06 \, M[Q]E[Q].
$$
Thus, when discussing the real valued initial data, we will refer to
\eqref{E:Lreal} and \eqref{E:LAreal} instead of Theorems
\ref{T:Lushnikov} and \ref{T:Lushnikov-adapted}, correspondingly;
the blow up conditions look simple in this case.
\smallskip

For complex valued initial data, Theorem \ref{T:Lushnikov} condition
\eqref{E:L} has to be considered
separately for positive and negative values of $V_t(0)$. %
Define (a scale invariant quantity)
$$ \ds \omega = \frac83 \frac{E \,V(0)}{M^2},
$$
then for positive valued $V_t(0)$ blow up happens
\begin{itemize}
\item
in the intersection of the regions:
\begin{equation}
 \label{E:L+}
0 < \omega \leq 1 \quad \text{and} \quad \frac{V_t(0)}{M} < 2 \sqrt
3 \left(\frac{3^{1/2}M}{2^{1/2} E^{1/2} V(0)^{1/2}} + \frac{8 \,E
V(0)}{3 \,M^2} - 3\right)^{1/2},
\end{equation}
\end{itemize}
and for the negative valued $V_t(0)$ blow up happens
\begin{itemize}
\item
in all of the region $0 < \omega \leq 1$
\item
and in the intersection of the regions:
\begin{equation}
 \label{E:L-}
\omega \geq 1 \quad \text{and} \quad \frac{|V_t(0)|}{M} > 2 \sqrt 3
\left(\frac{3^{1/2}M}{2^{1/2} E^{1/2} V(0)^{1/2}} + \frac{8 \,E
V(0)}{3 \,M^2} - 3 \right)^{1/2}.
\end{equation}
\end{itemize}

In this paper we consider the complex valued initial data with the
quadratic phase %
\begin{equation}
 \label{E:quadraticphase}
u_0(r) = f(r)\, e^{i \gamma \,r^2},
\end{equation}
with $f(r)$ - real valued radial function and $\gamma \in \cR$. For
such initial condition we have
$$
V(0) = 4\pi F, \quad V_t(0) = 32 \pi \gamma F \quad \text{with}
\quad F = \int_0^{\infty} r^4 \, |f(r)|^2 \, dr.
$$
Also
$$
E =  \left( 2\pi \int_0^\infty r^2 |\grad f |^2 \, dr - \pi
\int_0^\infty r^4 |f|^2 \, dr \right) + 8 \pi \gamma^2 F = E^0 +
E^\gamma.
$$
Since
\begin{equation}
 \label{E:phasenulling}
{V_t(0)^2} - 32 V(0) E^\gamma = 0,
\end{equation}
the second condition in \eqref{E:L+} is simplified further to
\begin{equation}
 \label{E:L+simple}
\sqrt{\frac32} \frac{M}{[E\, V(0)]^{1/2}}+ \frac83
\frac{E^0\,V(0)}{M^2}-3>0,
\end{equation}
and analogously, the second condition in \eqref{E:L-} will be as
above inequality with the reversed sign (we write it out for future
reference)
\begin{equation}
 \label{E:L-simple}
\sqrt{\frac32} \frac{M}{[E\, V(0)]^{1/2}}+ \frac83
\frac{E^0\,V(0)}{M^2}-3 < 0.
\end{equation}

Similarly, Theorem \ref{T:Lushnikov-adapted} condition \eqref{E:LA}
has to be studied separately for positive and negative values of
$V_t(0)$. Denoting (also a scale invariant quantity)
$$
\kappa = 4\,C^{14/3} \, \frac{E^{2/3}V(0)}{M^{7/3}},
$$
we obtain that for the positive valued $V_t(0)$ blow up happens
\begin{itemize}
\item
in the intersection of the regions:
\begin{equation}
 \label{E:LA+}
\qquad 0 < \kappa \leq 1 \quad \text{and} \quad \frac{V_t(0)}{M} <
\frac{2\sqrt 2}{C^{7/3}}
\left(\frac{M^{3/2}}{C^{7/3}V(0)^{1/2}}+\frac{4C^{14/3}E V(0)}{M^2}
- 3(ME)^{1/3} \right)^{1/2},
\end{equation}
\end{itemize}
and for the negative valued $V_t(0)$ blow up happens
\begin{itemize}
\item
in all of the region $0 < \kappa \leq 1$
\item
and in the intersection of the regions:
\begin{equation}
 \label{E:LA-}
\qquad \kappa \geq 1 \quad \text{and} \quad \frac{|V_t(0)|}{M}
> \frac{2\sqrt 2}{C^{7/3}}
\left(\frac{M^{3/2}}{C^{7/3}V(0)^{1/2}}+\frac{4C^{14/3}E V(0)}{M^2}
- 3(ME)^{1/3} \right)^{1/2}.
\end{equation}
\end{itemize}
For the initial data with the quadratic phase as in
\eqref{E:quadraticphase}, the second condition in \eqref{E:LA+}
reduces to
\begin{equation}
 \label{E:LA+simple}
\frac{M^{3/2}}{C^{7} V(0)^{1/2}} + 4 \, \frac{E^0 V(0)}{M^2} -
3\left(\frac{ME}{C^{14}}\right)^{1/3} > 0,
\end{equation}
due to \eqref{E:phasenulling}. The second inequality in
\eqref{E:LA-} reduces to the same inequality as above except with
the reversed sign (again, we write it out for the convenience of
future reference):
\begin{equation}
 \label{E:LA-simple}
\frac{M^{3/2}}{C^{7} V(0)^{1/2}} + 4 \, \frac{E^0 V(0)}{M^2} -
3\left(\frac{ME}{C^{14}}\right)^{1/3} < 0.
\end{equation}

In computations below we study the conditions
\eqref{E:L+}-\eqref{E:L-}, \eqref{E:L+simple}-\eqref{E:L-simple} and
\eqref{E:LA+}-\eqref{E:LA-simple} instead of \eqref{E:L} and
\eqref{E:LA}, respectively. In graphical presentation we refer to
the set of conditions \eqref{E:L+}-\eqref{E:L-},
\eqref{E:L+simple}-\eqref{E:L-simple} as {\it ``Condition from Thm.
\ref{T:Lushnikov}"} and \eqref{E:LA+} - \eqref{E:LA-simple} as {\it
``Condition from Thm. \ref{T:Lushnikov-adapted}"}.

\section{$Q$ profile}
\label{S:sechprofile}

In this section we study initial data of the form
\begin{equation}
 \label{E:Qphase}
u_0(r) = \lambda^{3/2}Q(\lambda r) \, e^{i\gamma r^2},
\end{equation}
where $Q$ is the ground state defined by \eqref{E:Q}. Note that
$u_0$ has been scaled so that $M[u]=M[Q]$ for all $\lambda>0$.

We compute
$$
\frac{E[u]}{E[Q]} = 3\lambda^2 - 2\lambda^3 + \frac{3\tilde
\gamma^2}{\lambda^2} \,, \qquad \frac{ \|\nabla
u_0\|_{L^2}^2}{\|\nabla Q\|_{L^2}^2} = \lambda^2 + \frac{\tilde
\gamma^2}{\lambda^2} \,,
$$
where, by a Pohozhaev identity \eqref{E:Pohozhaev3d},
\begin{equation}
 \label{E:tilde-gamma}
\ds \tilde\gamma^2 = 4 \, \gamma^2
\frac{\|yQ\|_{L^2(\cR^3)}^2}{\|\nabla Q\|_{L^2(\cR^3)}^2} = \frac43
\, \gamma^2 \frac{\|yQ\|_{L^2(\cR^3)}^2}{\|Q\|_{L^2(\cR^3)}^2}.
\end{equation}
The blow up when energy is negative occurs when
\begin{equation}
 \label{E0:negative-energy}
\tilde \gamma^2 < \frac23 \lambda^5 - \lambda^4.
\end{equation}
For positive energy blow-up is analytically proven in the case $\ds
\frac{E[u]}{E[Q]} \leq 1$ and $\ds \frac{\|\nabla
u_0\|_{L^2}^2}{\|\nabla Q\|_{L^2}^2}>1$ by Theorem \ref{T:DHR}.
These conditions equate to
$$
\lambda^2(1-\lambda^2) < \tilde \gamma^2 < \frac13 \lambda^2
(1-3\lambda^2+2\lambda^3) = \frac13 (1-\lambda)^2(1+2\lambda) \,,
$$
which is not possible for any $0<\lambda<1$. On the other hand,
scattering is proved in the case  $\frac{E[u]}{E[Q]}<1$ and $\frac{
\|\nabla u_0\|_{L^2}^2}{\|\nabla Q\|_{L^2}^2}<1$, which just reduces
to the condition $0<\lambda<1$ and
$$
\tilde \gamma^2 < \frac13\lambda^2 (1-3\lambda^2+2\lambda^3).
$$
This is depicted in Figure \ref{F:MG-MEcurves}.
\begin{figure}[h]
\begin{center}
\includegraphics[width=14cm]{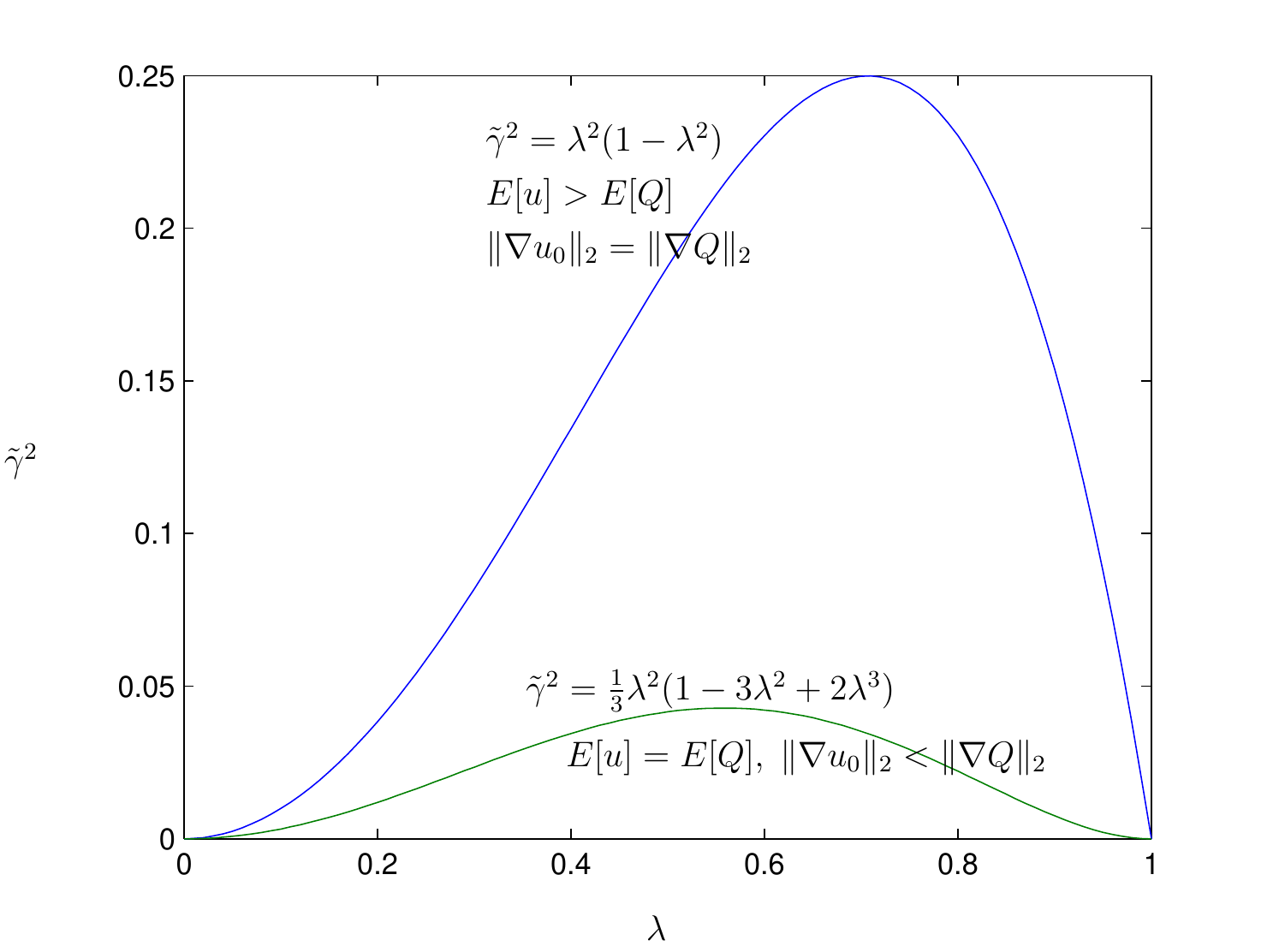}
\end{center}
\caption{The curves for the mass-gradient and mass-energy thresholds
for the $Q$ profile in \eqref{E:Qphase}.}
 \label{F:MG-MEcurves}
\end{figure}

We now investigate the conditions for blow up from Theorems
\ref{T:Lushnikov} and \ref{T:Lushnikov-adapted}. Note that
$$
V(0) = \frac1{\lambda^2} \, \|y Q\|^2_2 \quad \text{and} \quad
V_t(0) = \gamma \, \frac{8}{\lambda^2} \,   \|y Q\|_2^2.
$$
Hence, depending on the sign of $\gamma$, Theorems
\ref{T:Lushnikov}, \ref{T:Lushnikov-adapted} provide different
ranges of values $(\lambda, \gamma)$ for which blow up occurs. We
start with $\gamma < 0$. In what follows we use $\tilde\gamma$
instead of $\gamma$, recall their simple relation
\eqref{E:tilde-gamma}.

$\bullet$ Theorem \ref{T:Lushnikov}: we investigate the region given
by $0 < \omega \leq 1$, see notation in \eqref{E:L+}, and then its
complement with an additional restriction \eqref{E:L-simple}. The
condition $\omega \leq 1$ is
\begin{equation}
 \label{E0:Lgreendash}
\tilde \gamma^2 \leq \frac23 \lambda^5 + \left(\frac14
\frac{\|Q\|^2_2}{\|y Q\|_2^2} -1 \right) \lambda^4,
\end{equation}
note that the right side is nonnegative when $\lambda \geq
\frac32(1-\frac{\|Q\|^2_2}{4\,\|y Q\|_2^2}) \approx 1.15$.
The %
condition \eqref{E:L-simple} is
\begin{multline}
 \label{E0:Lgreen}
\qquad \tilde \gamma^2 > \frac23 \lambda^5 -
\left(\frac{\|Q\|^2_2}{\|y Q\|^2_2} \frac1{\left(4(\frac23
\lambda-1)\frac{\|y Q\|^2_2}{\|Q\|^2_2}+3\right)^2} - 1
\right)\lambda^4,\\
\text{provided} \quad \lambda \geq \frac32
\left(1-\frac34\frac{\|Q\|^2_2}{\|y Q\|^2_2}\right) \approx .45.
\qquad
\end{multline}
Thus, for $\gamma < 0$, or negative $V_t(0)$, union of
\eqref{E0:Lgreendash} and \eqref{E0:Lgreen} will give a region where
solution will blow up in finite time. It turns out that the first
condition \eqref{E0:Lgreendash} is a part of the region covered by
\eqref{E0:Lgreen}, and the last one is depicted in Figure
\ref{F:sechn} under the name {\it ``Blow up by Thm.
\ref{T:Lushnikov}"}.

For $\gamma > 0$, the blow up region by Theorem \ref{T:Lushnikov} is
an intersection of \eqref{E0:Lgreendash} and the inequality
\eqref{E0:Lgreen} with the reversed sign, which is depicted in
Figure \ref{F:sechp} under the name {\it ``Blow up by Thm.
\ref{T:Lushnikov}"}. It turns out that this region has previously
been covered by our Theorem \ref{T:DHR}.

Summarizing, Theorem \ref{T:Lushnikov} provides a nontrivial blow up
condition for the pair $(\lambda, \tilde \gamma^2)$ only if $0.45
\leq \lambda \leq 1.15$ and $\gamma < 0$ (see Figure \ref{F:sechn}).

$\bullet$ Theorem \ref{T:Lushnikov-adapted}: similarly to the above,
we start with $\gamma < 0$ and investigate the region given by $0 <
\kappa \leq 1$, see notation in \eqref{E:LA+}, and then its
complement with an additional restriction \eqref{E:LA-simple}. The
condition $\kappa \leq 1$ is
\begin{equation}
 \label{E0:LAorangedash}
\tilde \gamma^2 < \left(\frac{5 \cdot 3^{3/2}}{8\pi \cdot 7^{5/2}}
\frac{\|Q\|_2^5}{\|yQ\|_2^3} +\frac23\right) \lambda^5 - \lambda^4,
\end{equation}
with the right side being nonnegative when $\lambda \geq
\left(\frac{5 \cdot 3^{3/2}}{8\pi \cdot 7^{5/2}}
\frac{\|Q\|_2^5}{\|yQ\|_2^3} +\frac23\right)^{-1} \approx 1.25$. The
condition \eqref{E:LA-simple} is
\begin{equation}
 \label{E0:LAorange}
\tilde \gamma^2 > \frac23 \lambda^5 - \lambda^4+ \frac23
\frac{C^{14}}{\|Q\|_2^4}\lambda^2 \left(
\left(\frac1{C^7}\frac{\|Q\|^3_2}{\|y Q\|_2} -
4\frac{\|yQ\|_2^2}{\|Q\|^2_2}\right)\lambda+ 2
\frac{\|yQ\|^2_2}{\|Q\|^2_2} \right)^3,
\end{equation}
where $C$ is from \eqref{E:LA}, and is valid for any $\lambda > 0$.
Again, it turns out that the first condition \eqref{E0:LAorangedash}
is a part of the region covered by \eqref{E0:LAorange}, and the last
one is depicted in Figure \ref{F:sechn} under the name {\it ``Blow
up by Thm. \ref{T:Lushnikov-adapted}"}.

For $\gamma > 0$, the blow up region by Theorem
\ref{T:Lushnikov-adapted} is an intersection of
\eqref{E0:LAorangedash} and the inequality \eqref{E0:LAorange} but
with the reversed sign, which is depicted in Figure \ref{F:sechp}
under the name {\it ``Blow up by Thm. \ref{T:Lushnikov-adapted}"}.
This region has also been previously covered by our Theorem
\ref{T:DHR}.

In summary, Theorem \ref{T:Lushnikov-adapted} provides a nontrivial
blow up condition for the pair $(\lambda, \tilde \gamma^2)$ for any
$0 \leq \lambda \leq 1.25$ and $\gamma < 0$ (see Figure
\ref{F:sechn}). In comparison with Theorem \ref{T:Lushnikov}, it
provides a wider range for $0 < \lambda < 0.762$.

Note that Theorems \ref{T:Lushnikov} and \ref{T:Lushnikov-adapted}
provide new information on the blow up behavior for the $Q$ profile
of the form \eqref{E:Qphase} with negative phase. In particular,
Theorem \ref{T:Lushnikov} gives a nontrivial range of $\gamma$ when
$0.45 < \lambda < 1.15$. Further extension is given by Theorem
\ref{T:Lushnikov-adapted} for any $\lambda < 0.762$, see Figure
\ref{F:sechn}. Both Theorems provide a blow up range ``under" the
mass-gradient condition, thus, showing that the last condition is
irrelevant for determining long time behavior in the region when
$M[u]E[u] < M[Q]E[Q]$.

Lastly, we compute $\|u_0\|_{\dot H^{1/2}}$ norm using
\eqref{E:H12norm-radial} numerically and then compare all conditions
about the global behavior for this initial data together with
numerical data in Figures \ref{F:sechn} and \ref{F:sechp} with
negative and positive signs in the initial phase, correspondingly.
For the positive phase Theorems \ref{T:Lushnikov} and
\ref{T:Lushnikov-adapted} do not provide any new information,
however, we have numerical range on blow up threshold and include
the plot for illustration and completeness.

\begin{figure}[h]
\includegraphics[scale=.72]{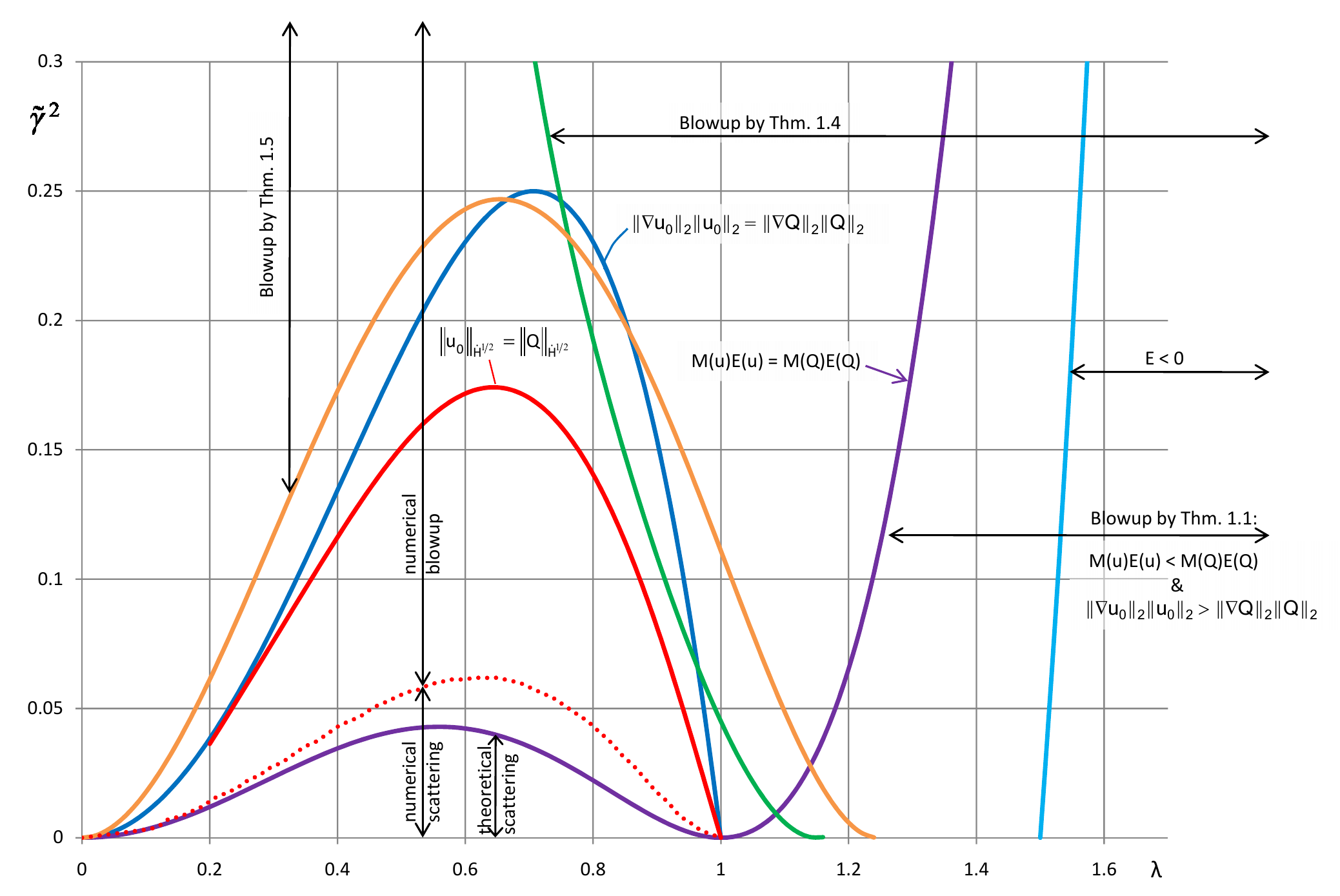}
\caption{Global behavior of the solutions to the $Q$ profile initial
data with the negative quadratic phase: $u_0(r) = \lambda^{3/2}
Q(\lambda r) \, e^{- i |\gamma| r^2}$. Here, $\tilde \gamma$ is the
renormalized $\gamma$, see \eqref{E:tilde-gamma}, namely, $\tilde
\gamma^2 \approx 1.43 \gamma^2$. The region of ``theoretical
scattering" is provided by Thm. \ref{T:DHR}, see also Figure
\ref{F:MG-MEcurves}. ``Blowup by Thm. \ref{T:Lushnikov}" is given by
\eqref{E0:Lgreen} and ``Blowup by Thm. \ref{T:Lushnikov-adapted}" is
given by \eqref{E0:LAorange}. The intersection of these two
conditions occurs at $\lambda \approx 0.762$.}
 \label{F:sechn}
\end{figure}

\begin{figure}[h]
\includegraphics[scale=.72]{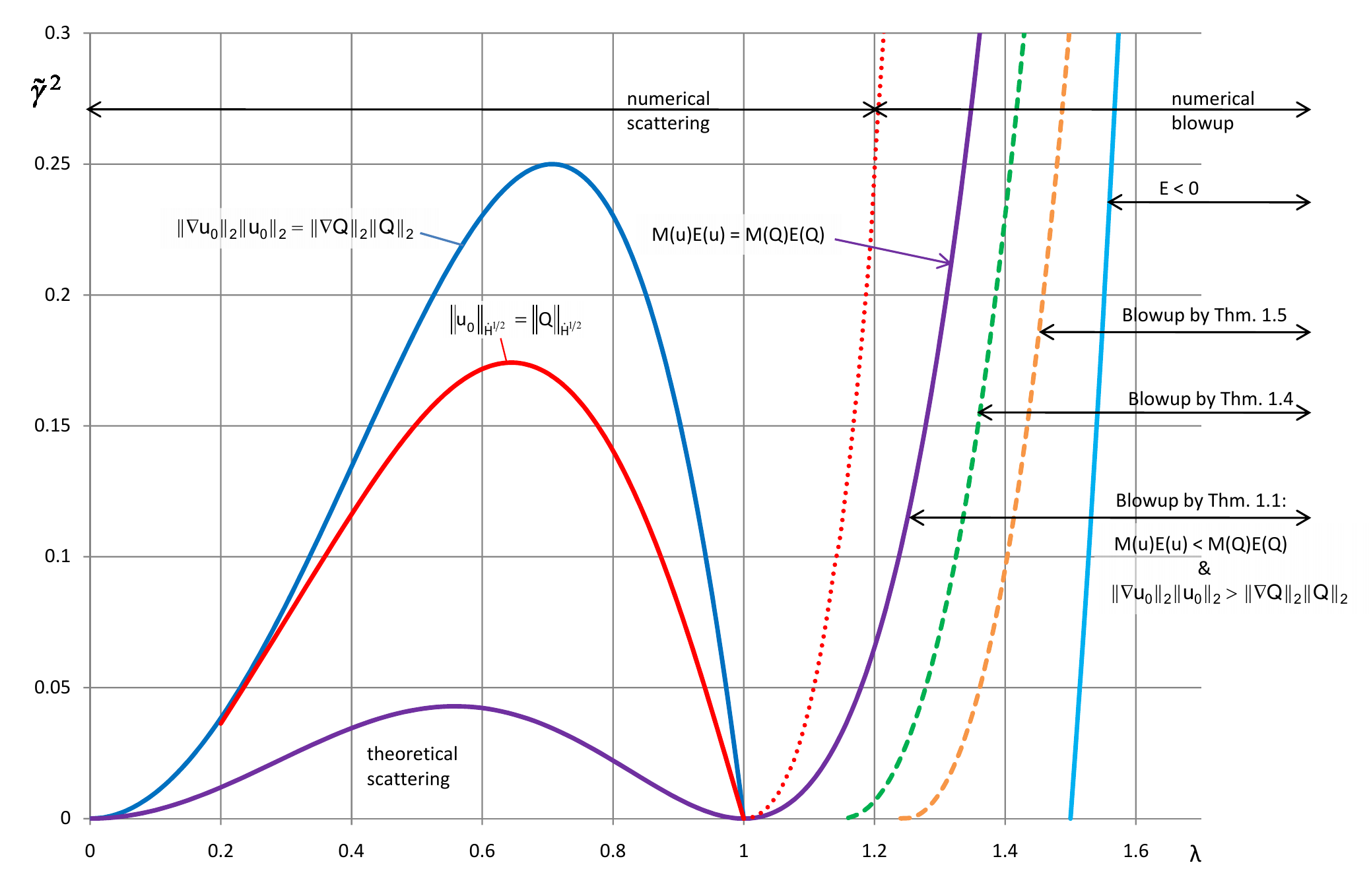}
\caption{Global behavior of the solutions to the $Q$ profile initial
data with the positive quadratic phase: $u_0(r) = \lambda^{3/2}
Q(\lambda r) \, e^{+ i |\gamma| r^2}$. As before, $\tilde \gamma$ is
the renormalized $\gamma$, see \eqref{E:tilde-gamma}, namely,
$\tilde \gamma^2 \approx 1.43 \gamma^2$. The region of ``theoretical
scattering" is the same as in Fig. \ref{F:sechn} and is provided by
Thm. \ref{T:DHR}. ``Blowup by Thm. \ref{T:Lushnikov}" is given by
the complement of \eqref{E0:Lgreen} intersected with
\eqref{E0:Lgreendash} and ``Blowup by Thm.
\ref{T:Lushnikov-adapted}" is given by the complement of
\eqref{E0:LAorange} intersected with \eqref{E0:LAorangedash}. }
 \label{F:sechp}
\end{figure}

\begin{example}
Consider $u_0 = Q(r)\, e^{i \gamma r^2}$ (i.e., take $\lambda=1$ in
\eqref{E:Qphase}). Note that
$$
M[u_0] E[u_0] = \left(1+4\gamma^2 \frac{\|yQ\|^2_2}{\|Q\|^2_2}
\right) M[Q] E[Q] > M[Q] E[Q],
$$
and thus, Theorems \ref{T:DHR} and \ref{T:DR} can not be applied.
However, the solution with such initial condition will blow up in
finite time if
$$
\gamma < -\left(\frac34 \frac{\|Q\|^4_2}{\|yQ\|^4_2 \left(3-\frac43
\frac{\|Q\|^2_2}{\|yQ\|^2_2} \right)^2} -
\frac{\|Q\|^2_2}{4\|yQ\|^2_2} \right)^{1/2} \approx - 0.177,
$$
by Theorem \ref{T:Lushnikov}, or when
$$
\gamma < -\left( \frac{\|Q\|^4_2}{54 C^7 \|yQ\|^2_2}
\left(\frac{\|Q\|_2}{\|yQ\|_2} + \frac{2 C^7 \,
\|yQ\|^2_2}{\|Q\|_2^4} \right)^3 - \frac{\|Q\|^2_2}{4
\, \|yQ\|^2_2} \right)^{1/2} \approx %
- 0.279,
$$
by Theorem \ref{T:Lushnikov-adapted}. In this example, Theorem
\ref{T:Lushnikov} is more powerful than Theorem
\ref{T:Lushnikov-adapted}, however, this is not always the case, as
can be seen from Figure \ref{F:sechn} (for example, for $\lambda <
.762$ Theorem \ref{T:Lushnikov-adapted} gives a larger range for
blow up).
\end{example}

\subsection{Conclusions}

\begin{enumerate}
\item
The condition ``$\|u_0\|_{\dot H^{1/2}}<\|Q\|_{\dot H^{1/2}}$ {\it
implies scattering}'' is not valid; the numerical blow-up curve is
\emph{below} the $\|u_0\|^2_{\dot H^{1/2}}=\|Q\|^2_{\dot H^{1/2}}$
curve in Figure \ref{F:sechn}.  This supports Conjecture 2.

\item
The condition ``$\|u_0\|_{L^2}\|\nabla
u_0\|_{L^2}<\|Q\|_{L^2}\|\nabla Q\|_{L^2}$ {\it implies
scattering}'' is not valid (unless $M[u]E[u]<M[Q]E[Q]$ as in Theorem
\ref{T:DHR}); not only the numerical blow-up curve is \emph{below}
the $\|u_0\|_{L^2}\|\nabla u_0\|_{L^2}=\|Q\|_{L^2}\|\nabla
Q\|_{L^2}$ curve in Figure \ref{F:sechn}, but also both Theorems
\ref{T:Lushnikov} and \ref{T:Lushnikov-adapted} provide range of
$(\lambda, \gamma)$ for which blow up from the initial data
\ref{E:Qphase} occurs and this range is {\it below} the
$\|u_0\|_{L^2}\|\nabla u_0\|_{L^2}=\|Q\|_{L^2}\|\nabla Q\|_{L^2}$
curve in Figure \ref{F:sechn}.

\item
Previously, no theoretical blow-up result for the profile
\eqref{E:Qphase} with $0<\lambda \leq 1$ could be obtained from
Theorem \ref{T:DHR}. The new blow-up criteria in Theorems
\ref{T:Lushnikov} and \ref{T:Lushnikov-adapted} give a nonempty set
of $(\lambda, \gamma)$ with $\gamma<0$ for which blow up occurs, see
conditions \eqref{E0:Lgreendash}-\eqref{E0:Lgreen} and
\eqref{E0:LAorangedash}-\eqref{E0:LAorange} as well as the
illustration in Figure \ref{F:sechn}.
\end{enumerate}

\section{Gaussian profile}
\label{S:Gaussian}

In this section, we study initial data $u_0$ of the form
\begin{equation}
 \label{E:Gaussian}
u_0(x) = p \, e^{-\alpha r^2/2} \, e^{i\gamma r^2} \,,  \quad r =
|x|\,, \quad x \in \cR^3.
\end{equation}
By scaling, it suffices to consider the cases $\gamma=0$ (real data)
and $\gamma=\pm \frac12$. The main parameters are %
\begin{align*}
& M[u] = %
\frac{\pi^{3/2} p^2}{\alpha^{3/2}}, %
\qquad \Vert \nabla u_0 \Vert^2_{L^2} =  \frac{3\pi^{3/2}\,p^2}{2\,
\alpha^{1/2}} \,\left(1+4\frac{\gamma^2}{\alpha^2} \right),\\
& E[u] = \frac{\pi^{3/2} p^2}{4 \, \alpha^{1/2}}
\left(3\left(1+4\frac{\gamma^2}{\alpha^2}\right) -
\frac{p^2}{2\sqrt2 \alpha} \right), \\
& V(0) %
=  \frac{3 \,\pi^{3/2} p^2}{2 \,\alpha^{5/2}}, %
\qquad  V_t(0) =  \frac{12 \gamma \pi^{3/2} p^2}{\alpha^{5/2}}, \\
& \Vert u_0 \Vert^2_{\dot{H}^{1/2}}
= \frac{2 \, \pi \, p^2}{\alpha} \left(1+ 4
\frac{\gamma^2}{\alpha^2}\right)^{1/2}.
\end{align*}
To compute the last expression $\|u_0\|_{\dot H^{1/2}}$, consider
the Fourier transform of $u_0^\gamma$ (here, $R^2 = |\xi|^2$, $\xi
\in \cR^3$)
\begin{align*}
\widehat{u}_{0} (R)
& = p \, \left(\frac{2 \pi}{\alpha - 2 i \gamma} \right)^{3/2} \,
e^{- \frac{2 \pi^2 \, \alpha}{\alpha^2 +4 \gamma^2} R^2} \, e^{- i
\frac{4 \pi^2 \, \gamma}{\alpha^2 +4 \gamma^2} R^2} ,
\end{align*}
where we used $ \int_{-\infty}^{\infty} e^{i(az^2+2 b z)} \, dz =
\sqrt{\frac{\pi \, i}{a}} \, e^{-i \, b^2/a}$,  $a$, $b \in \cC$. By
\eqref{E:H12norm-radial} we have
\begin{equation}
\Vert u_{0} \Vert^2_{\dot{H}^{1/2}(\cR^3)}
= \frac{64 \pi^5 \, p^2}{(\alpha^2 + 4 \gamma^2)^{3/2}}
\int_0^\infty e^{-\frac{4 \pi^2 \, \alpha}{\alpha^2+4 \gamma^2}R^2}
\, R^3 \, dR  = \frac{2 \, \pi \, p^2}{\alpha^2} \, (\alpha^2+4
\gamma^2)^{1/2}.
\end{equation}

\subsection{Real Gaussian}
 \label{S:GaussianReal}
Take $\gamma=0$ in \eqref{E:Gaussian}. Then by scaling, the behavior
of solutions is a function of $p/\sqrt \alpha$. We have
\begin{itemize}
\item
$E[u]>0$ if
\begin{equation}
 \label{E1:posE}
p < \left(6 \sqrt 2 \right)^{1/2} \sqrt \alpha \approx 2.91 \, \sqrt
\alpha;
\end{equation}

\item
the condition on the mass and gradient $\ds \|u_{0} \|^2_{L^2}
\|\nabla u_{0} \|^2_{L^2} < \|Q\|^2_{L^2}\|\nabla Q\|^2_{L^2}$
implies
\begin{equation}
 \label{E1:MG}
p < 2^{1/4} \, \pi^{-3/4}\, \|Q\|_{L^2} \, \sqrt \alpha \approx 2.19
\, \sqrt \alpha;
\end{equation}

\item
the mass-energy condition $ M[u]E[u] < M[Q]E[Q]$ is
$$
\frac{\pi^3 p^4}{4 \alpha^2} \left(3 - \frac{p^2}{2\sqrt2 \alpha}
\right) < \frac12 \, \|Q\|^4_{L^2},
$$
which gives
\begin{equation}
 \label{E1:ME}
p < 1.92 \sqrt \alpha \quad \text{and} \quad p > 2.69 \sqrt \alpha.
\end{equation}

\item
the invariant norm condition $\Vert u_0 \Vert^2_{\dot{H}^{1/2}} <
\Vert Q \Vert^2_{\dot{H}^{1/2}}$ is
\begin{equation}
 \label{E1:H12}
p < (2 \, \pi)^{-1/2} \, \Vert Q \Vert_{\dot{H}^{1/2}} \, \sqrt
\alpha \approx 2.10 \sqrt \alpha.
\end{equation}

\item
(Theorem \ref{T:Lushnikov})
the condition \eqref{E:Lreal} is
\begin{equation}
 \label{E1:green}
p > \left(4 \sqrt 2 \right)^{1/2} \, \sqrt \alpha \approx 2.38 \sqrt
\alpha,
\end{equation}

\item
(Theorem \ref{T:Lushnikov-adapted})
the condition \eqref{E:LAreal} is
\begin{equation}
 \label{E1:orange}
p > \left(\frac{3 \cdot 2^{3/2} \cdot 7^{5/2}}{30 \pi^{1/2}+7^{5/2}}
\right)^{1/2} \sqrt \alpha \approx 2.45 \sqrt \alpha,
\end{equation}

\item
Numerical simulations: the results for the real Gaussian initial
data \eqref{E:Gaussian} are in Table
\ref{T:numerics-gaussian-nophase-alpha}. For $p \geq p_b$ the blow
up was observed, for $p \leq p_s$ the solution dispersed over time.
For example, for $\alpha = 1$ the threshold is between $2.07$ and
$2.08$. This is consistent with the previously reported threshold by
Vlasov et al. in \cite{VPT89} ($p = 2.0764$). From this table it
also follows that ${p_s}/{\sqrt \alpha} \in (2.07, 2.075)$ and
${p_b}/\sqrt \alpha \in (2.077, 2.08)$.

\begin{table}[h]
\begin{tabular} [c]{|l||l|l|l|l|l|l|l|}
\hline $\alpha$ & $0.5$ & $1.0$ & $2.0$  & $4.0$ & $6.0$ & $8.0$ & $10.0$\\
\hline
\hline $p_s$ & $1.46$ & $2.07$ & $2.93$ & $4.15$ & $5.08$ & $5.87$ & $6.56$\\
\hline $p_b$ & $1.47$ & $2.08$ & $2.94$ & $4.16$ & $5.09$ & $5.88$ & $6.57$ \\
\hline
\end{tabular}
\bigskip
\caption{Thresholds for blow up/scattering from numerical
simulations for the Gaussian initial data: for $p \leq p_s$
scattering was observed and for $p \geq p_b$ blow up in finite time
was observed. For comparison the values of $p$ from \eqref{E1:H12}
are also listed.}
  \label{T:numerics-gaussian-nophase-alpha}
\end{table}
In Kuznetsov et. al. \cite{KRRT} it is reported that for the
Gaussian initial data \eqref{E:Gaussian} with $\gamma = 0$, the
numerical condition for collapse
is when two conditions hold:
\begin{equation}
 \label{E:Kuznetsov1}
\frac{\|\nabla u_0\|_2^2 \|u_0\|_2^2}{\|\nabla Q\|_2^2 \|Q\|_2^2} >
0.80255 \quad \text{or} \quad p > \frac{(2 \cdot
0.80255)^{1/4}}{\pi^{3/4}} \|Q\|_{L^2} \, \sqrt \alpha \approx
2.0759 \, \sqrt \alpha,
\end{equation}
and
\begin{equation}
 \label{E:Kuznetsov2}
\frac{E[u_0]M[u_0]}{E[Q]M[Q]}>1.1855 \quad \text{or} \quad 2.0764
\sqrt \alpha < p < 2.6105 \sqrt \alpha. \hspace{3.5cm}
\end{equation}
Thus, the numerical threshold $p/\sqrt \alpha \approx 2.0764$ is
reported in \cite{KRRT}. Our data is consistent with this report,
see also Figure \ref{F:all-gauss-nophase}.
\end{itemize}

To compare the conditions \eqref{E1:MG} - \eqref{E1:orange} with the
numerics, we graph them in Figure \ref{F:all-gauss-nophase}. For
clarity of presentation, and also for comparison with the
$\gamma\neq 0$ case considered next, we plot $\ds \frac{p}{\sqrt
\alpha}$ on the vertical axis vs $\alpha$ on the horizonal.

\begin{figure}[p]
\includegraphics[scale=.81]{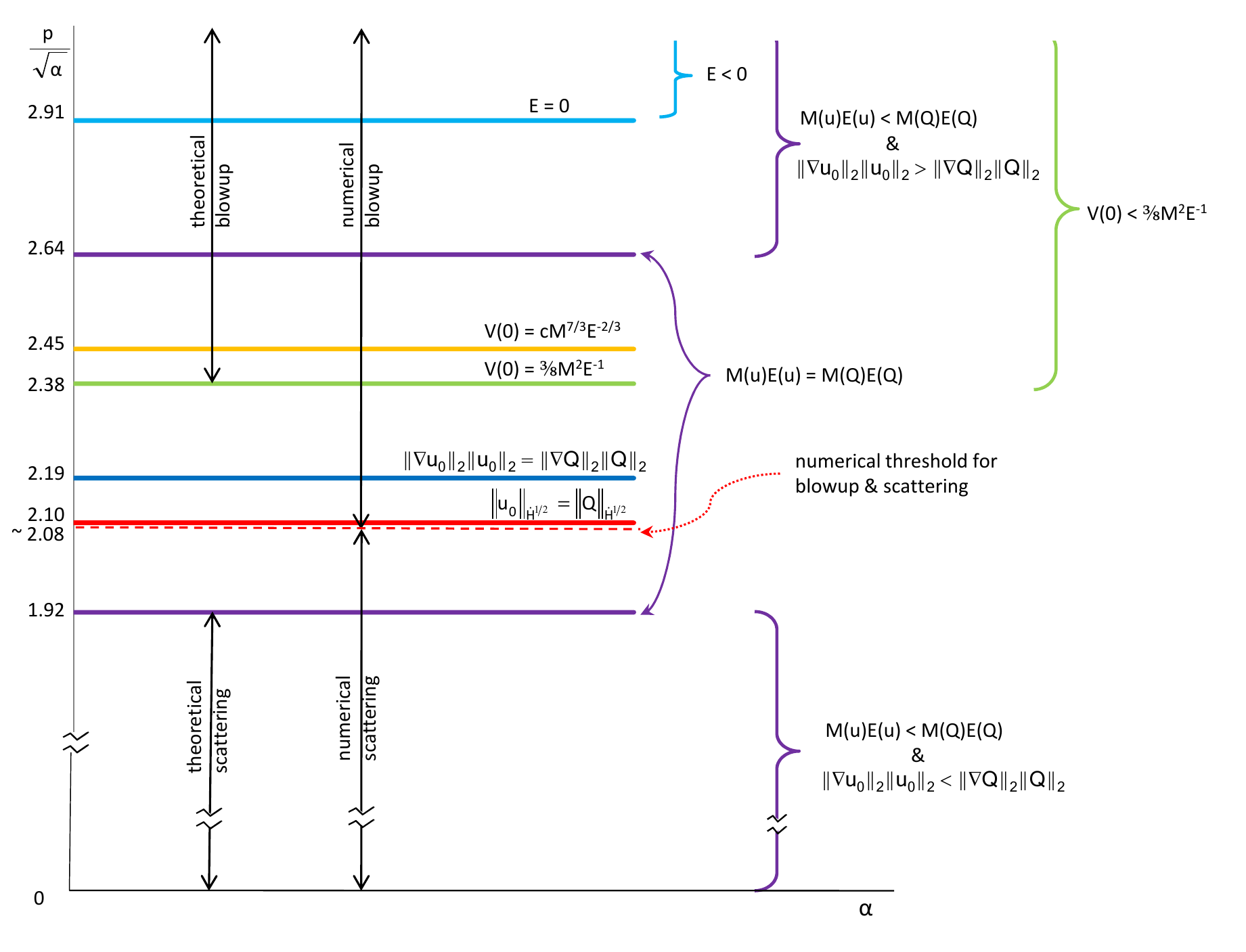}  %
\caption{Global behavior of the solutions with the (real) Gaussian
initial data $u_0(r) = p \, e^{\alpha r^2 / 2}$. The line denoted by
$V(0)=3/8 M^2 E^{-1}$ is the threshold for blow up from Theorem
\ref{T:Lushnikov}, see \eqref{E1:green}; similarly, the line denoted
by $V(0)=c M^{7/3} E^{-2/3}$ is the threshold for blow up from
Theorem \ref{T:Lushnikov-adapted}, see \eqref{E1:orange}. The line
``theoretical scattering" is given by Thm. \ref{T:DHR}, see
\eqref{E1:ME}. The numerical threshold (dashed line) comes from
Table \ref{T:numerics-gaussian-nophase-alpha} normalized by $\sqrt
\alpha $. For all other values refer to text in \S
\ref{S:GaussianReal}.}
 \label{F:all-gauss-nophase}
\end{figure}

\afterpage{\clearpage}

\subsection{Gaussian with a quadratic phase}
Now we consider \eqref{E:Gaussian} with $\gamma \neq 0$. (By scaling
it suffices to consider $\gamma = \pm \frac12$.)  We compute
\begin{itemize}
\item
$E[u]>0$ if
\begin{equation}
 \label{E1:posEphase}
p < \left(6 \sqrt 2 \right)^{1/2} \sqrt \alpha \, \left(1+4
\frac{\gamma^2}{\alpha^2} \right)^{1/2} \approx 2.91 \, \sqrt \alpha
\, \left(1+4 \frac{\gamma^2}{\alpha^2} \right)^{1/2} ;
\end{equation}

\item
the condition on the mass and gradient $\ds \|u_{0} \|^2_{L^2}
\|\nabla u_{0} \|^2_{L^2} < \|Q\|^2_{L^2}\|\nabla Q\|^2_{L^2}$ is
\begin{equation}
\label{E1:MG+phase}
p < \frac{2^{1/4} \, \|Q\|_{L^2}}{\pi^{3/4}} \, \frac{\sqrt
\alpha}{(1+4\frac{\gamma^2}{\alpha^2})^{1/4}} \approx 2.19 \, {\sqrt
\alpha} \, {\left(1 + 4\frac{\gamma^2}{\alpha^2}\right)^{-1/4}};
\end{equation}

\item
the mass-energy condition $ M[u]E[u] < M[Q]E[Q]$ is
\begin{equation}
 \label{E1:ME+phase}
\frac{\pi^3 p^4}{4 \,\alpha^2}
\left(3\left(1+4\frac{\gamma^2}{\alpha^2}\right) -
\frac{p^2}{2\sqrt2 \alpha} \right) < \frac12 \|Q\|^4_{L^2}. %
\end{equation}
The left hand side is a cubic polynomial in $y^2=p^2/\alpha$ and can
be solved explicitly to obtain $\ds \frac{p}{\alpha^{1/2}} \text{ as
a function of }\alpha $, though with a very complicated expression.
We list a few values for $\gamma = \pm \frac12$ in Table
\ref{T1:ME}.
\begin{table}[h]
\begin{tabular}{| c || c | c | c | c | c | c | c | c |}
\hline $\alpha$ & $0.25$ & $0.5$ & $1$ & $2$ & $4$ & $6$ & $8$ & $10$ \\
\hline $p_1$ & $0.41$ & $0.79$ & $1.45$ & $2.42$ & $3.70$ & $4.62$ & $5.38$ & $6.04$\\
\hline $p_2$ & $6.01$ & $4.60$ & $4.09$ & $4.46$ & $5.65$ & $6.75$ & $7.71$ & $8.58$\\
\hline
\end{tabular}
\bigskip
\caption{The positive real roots of the equation in
\eqref{E1:ME+phase}.}
 \label{T1:ME}
\end{table}
The inequality in \eqref{E1:ME+phase} holds for $p < p_1$ and
$p>p_2$.

\item
the invariant norm condition $\Vert u_0 \Vert^2_{\dot{H}^{1/2}} <
\Vert Q \Vert^2_{\dot{H}^{1/2}}$ is
\begin{equation}
 \label{E1:p12-phase}
\quad p < \alpha \left( \frac{27.72665}{2\, \pi \,
(\alpha^2+4\gamma^2)^{1/2}}\right)^{1/2} \approx 2.10\, {\sqrt
\alpha} \, {\left(1+ 4\frac{\gamma^2}{\alpha^2}\right)^{-1/4}} ;
\end{equation}

\item
(Theorem \ref{T:Lushnikov})
the condition $\omega \leq 1$ from \eqref{E:L+} amounts to
\begin{equation}
 \label{E1:Lphase}
p \geq \left(4 \sqrt 2 \right)^{1/2} \, \sqrt \alpha \, \left(1+6
\frac{\gamma^2}{\alpha^2} \right)^{1/2} \approx 2.38 \sqrt \alpha \,
\left(1+6 \frac{\gamma^2}{\alpha^2} \right)^{1/2},
\end{equation}
similarly, $\omega \geq 1$ from \eqref{E:L-} will be the above with
the reversed sign. The condition \eqref{E:L+simple} for positive
$\gamma$ with $y=p/\sqrt \alpha$ is
\begin{equation}
y^6 - 6 \sqrt 2 \left(1+4\frac{\gamma^2}{\alpha^2}\right) y^4 + 64
\sqrt 2 > 0
\end{equation}
and with the reversed inequality sign for negative $\gamma$. The
last inequality is cubic in $y^2$ producing two positive roots,
which are listed in Table \ref{T1:Lgauss} for $\gamma = \pm\frac12$.
\begin{table}[h]
\begin{tabular}{| c || c | c | c | c | c | c | c | c |}
\hline $\alpha$ & $0.25$ & $0.5$ & $1$ & $2$ & $4$ & $6$ & $8$ & $10$ \\
\hline $p_b$ & $0.45$ & $0.86$ & $1.58$ & $2.68$ & $4.22$ & $5.37$ & $6.33$ & $7.16$\\
\hline $p_t$ & $6.01$ & $4.60$ & $4.08$ & $4.39$ & $5.42$ & $6.35$ & $7.17$ & $7.91$\\
\hline
\end{tabular}
\bigskip
\caption{The positive real roots, $p_t$ and $p_b$, of the function
in \eqref{E1:Lphase} for the condition \eqref{E:L+simple} (Theorem
\ref{T:Lushnikov}). }
 \label{T1:Lgauss}
\end{table}

\item
(Theorem \ref{T:Lushnikov-adapted})
the condition $\kappa \leq 1$ from \eqref{E:LA+} amounts to
\begin{equation}
 \label{E1:LAorange-phase}
p \geq \left(\frac{3 \cdot 2^{3/2} \cdot 7^{5/2}}{30
\pi^{1/2}+7^{5/2}} \right)^{1/2} \sqrt \alpha \, \left(1+ 4
\frac{\gamma^2}{\alpha^2} \right)^{1/2} \approx 2.45 \sqrt \alpha \,
\left(1+ 4 \frac{\gamma^2}{\alpha^2} \right)^{1/2},
\end{equation}
similarly, $\kappa \geq 1$ from \eqref{E:LA-} is the above
inequality with the reversed sign. The condition \eqref{E:LA+simple}
with $y=p/\sqrt \alpha$ for positive $\gamma$ is
\begin{equation}
 \label{E1:LAphase}
\left(\frac{15 \sqrt{2\pi}}{7^{5/2}} - \frac1{2^{3/2}} \right) y^2
+3 - %
\left(\frac{3^5\cdot 5^2\pi}{2\cdot 7^5} \right)^{1/3}
\left(3\left(1+4 \frac{\gamma^2}{\alpha^2}\right)y^4 -
\frac{y^6}{2^{3/2}} \right)^{1/3}> 0,
\end{equation}
and with the reversed sign for negative $\gamma$. The positive real
zeros of the function in \eqref{E1:LAphase} for $\gamma =
\pm\frac12$ are listed in Table \ref{T1:LAgauss}.
\begin{table}[h]
\begin{tabular}{| c || c | c | c | c | c | c | c | c |}
\hline $\alpha$ & $0.25$ & $0.5$ & $1$ & $2$ & $4$ & $6$ & $8$ & $10$ \\
\hline $p_b$ & $0.48$ & $0.93$ & $1.68$ & $2.81$ & $4.39$ & $5.57$ & $6.55$ & $7.41$\\
\hline $p_t$ & $6.01$ & $4.61$ & $4.10$ & $4.46$ & $5.54$ & $6.51$ & $7.36$ & $8.13$\\
\hline
\end{tabular}
\bigskip
\caption{The positive real roots, $p_t$ and $p_b$, of the function
in \eqref{E1:LAphase} for the condition \eqref{E:LA+simple} (Theorem
\ref{T:Lushnikov-adapted}). }
 \label{T1:LAgauss}
\end{table}

\item
Numerical simulations: we fix the quadratic phase $\gamma = \pm .5$
and vary the parameter $\alpha$ in our numerical simulations in
order to obtain the blow up threshold, see data in Table
\ref{T1:phase-num}. Note that we obtain different thresholds
depending on the sign of $\gamma$.
\begin{table}[h]
\begin{tabular} [c]{|c||l|l|l|l|l|l|l|}
\hline $\alpha$ & $0.5$ & $1.0$ & $2.0$  & $4.0$ & $6.0$ & $8.0$ & $10.0$\\
\hline
\hline $p_s^+$ & $2.8$ & $3.00$ & $3.56$ & $4.58$ & $5.43$ & $6.17$ & $6.83$\\
\hline $p_b^+$ & $2.8$ & $3.01$ & $3.57$ & $4.59$ & $5.44$ & $6.18$ & $6.835$\\
\hline $p_s^-$ & $0.79$ & $1.44$ & $2.42$ & $3.76$ & $4.76$ & $5.59$ & $6.313$\\
\hline $p_b^-$ & $0.80$ & $1.45$ & $2.43$ & $3.77$ & $4.77$ & $5.60$ & $6.315$\\
\hline
\end{tabular}
\bigskip
\caption{Thresholds from numerical simulations for the Gaussian
initial data with phase $\gamma = \pm .5$: for $p \geq p_b^\pm$ blow
up in finite time was observed and for $p \leq p_s^\pm$ scattering
was observed; $+$ superscript indicates the threshold for the
positive phase $\gamma=.5$ and $-$ superscript indicates the
negative phase $\gamma=-.5$.} %
\end{table}
 \label{T1:phase-num}
\end{itemize}

To compare conditions \eqref{E1:posEphase} -
\eqref{E1:LAphase} with the numerics, %
we graph the dependence of $p$ on $\alpha$ in Figures \ref{F:gaussp}
and \ref{F:gaussn} separately for positive and negative values of
the phase $\gamma = \pm \frac12$. We plot $\ds \frac{p}{\sqrt
\alpha}$ on the vertical axis to observe the asymptotics as $\alpha
\to \infty$, and thus, approaches the case of the real Gaussian
initial data.

\begin{figure}[p]
\includegraphics[scale=.73]{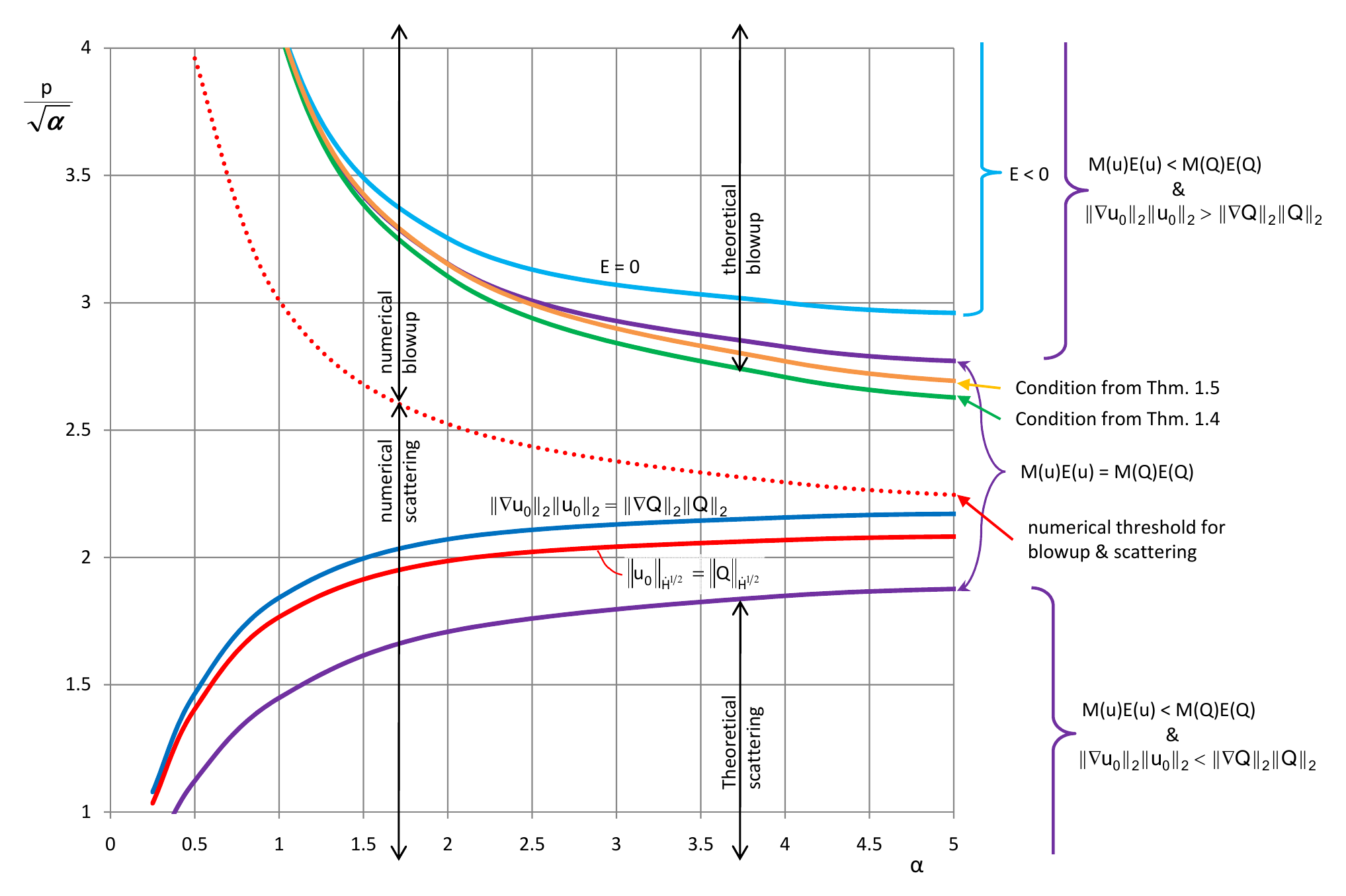}  %
\caption{Global behavior of the solutions to the Gaussian initial
data with the positive quadratic phase $u_0(r) = p \, e^{-\alpha
r^2/2} \, e^{+ i \frac12 r^2}$. The curve denoted by ``Condition
from Thm. \ref{T:Lushnikov}" comes from the largest positive root of
the equation in \eqref{E1:Lphase}, namely, values $p_t/\sqrt \alpha$
in Table \ref{T1:Lgauss}. Similarly, the curve denoted by
``Condition from Thm. \ref{T:Lushnikov-adapted}" comes from the
largest positive root of the equation in \eqref{E1:LAphase}, namely,
values $p_t/\sqrt \alpha$ in Table \ref{T1:LAgauss}. The curve
``theoretical scattering" is provided by Thm. \ref{T:DHR}, see
\eqref{E1:ME+phase} and values $p_1$ in Table \ref{T1:ME}. The
numerical threshold (dotted curve) comes from values $p_s^+$,
$p_b^+$ in Table \ref{T1:phase-num} normalized by $\sqrt \alpha$. }
 \label{F:gaussp}
\end{figure}

\begin{figure}[p]
\includegraphics[scale=.74]{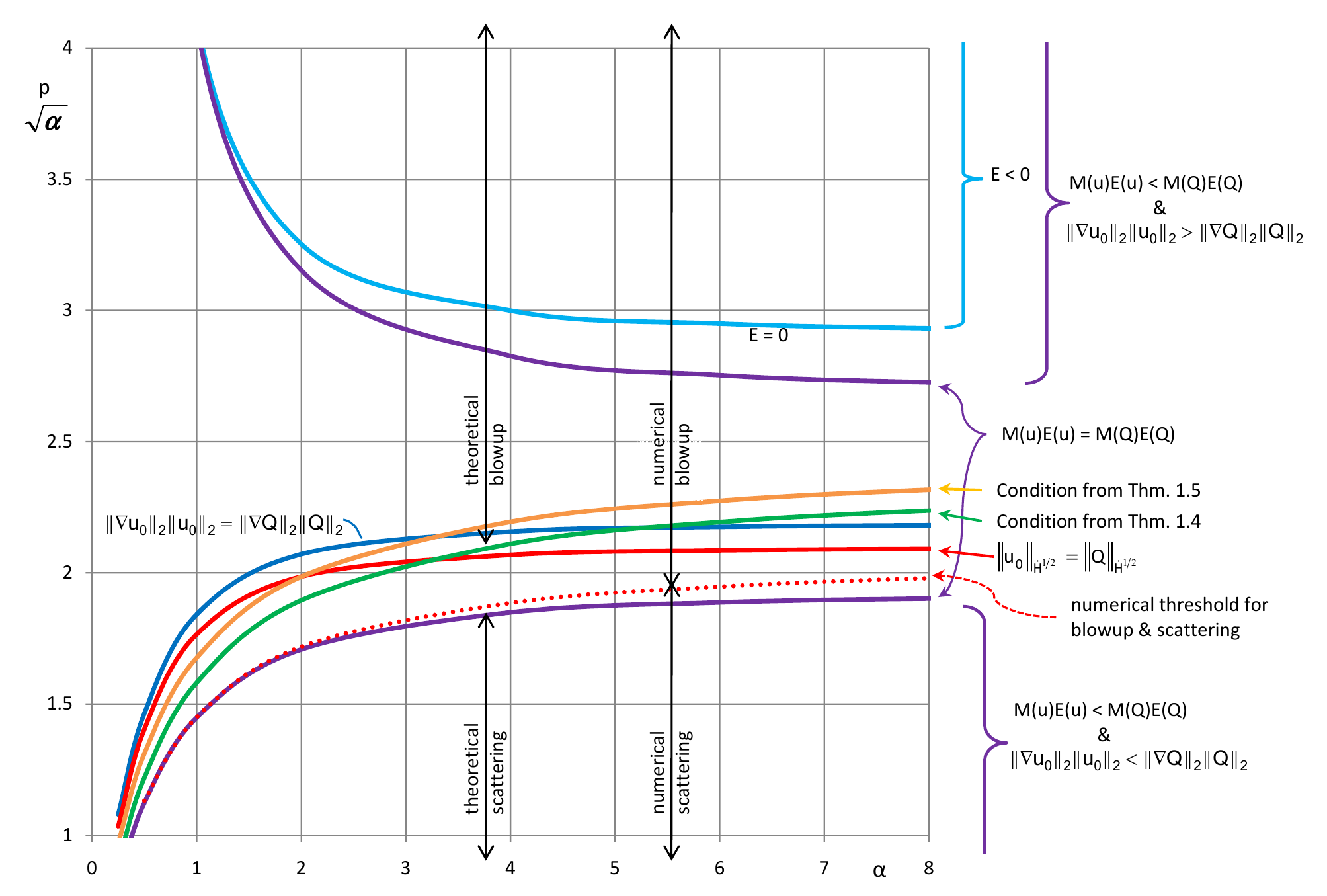}  %
\caption{Global behavior of the solutions to the Gaussian initial
data with the negative quadratic phase $u_0(r) = p \, e^{-\alpha
r^2/2} \, e^{- i \frac12 r^2}$. The curve denoted by ``Condition
from Thm. \ref{T:Lushnikov}" comes from the smallest positive root
of the equation in \eqref{E1:Lphase}, namely, values $p_b/\sqrt
\alpha$ in Table \ref{T1:Lgauss}. Similarly, the curve denoted by
``Condition from Thm. \ref{T:Lushnikov-adapted}" comes from the
smallest positive root of the equation in \eqref{E1:LAphase},
namely, values $p_b/\sqrt \alpha$ in Table \ref{T1:LAgauss}. The
curve ``theoretical scattering" is the same as in Fig.
\ref{F:gaussp} and is provided by Thm. \ref{T:DHR}, see
\eqref{E1:ME+phase}  and values $p_1$ in Table \ref{T1:ME}. The
numerical threshold (dotted curve) comes from values $p_s^-$,
$p_b^-$ in Table \ref{T1:phase-num} normalized by $\sqrt \alpha$.
This plot illustrates that both Theorems \ref{T:Lushnikov} and
\ref{T:Lushnikov-adapted} provide ranges of the Gaussian initial
data $u_0$ such that $\|u_0\|_{\dot{H}^{1/2}} <
\|Q\|_{\dot{H}^{1/2}}$ and $u(t)$ blows up in finite time.}
 \label{F:gaussn}
\end{figure}

\subsection{Conclusions}
The above computations show
\begin{enumerate}
\item
Consistency with Conjecture 3: if $u_0$ is \emph{real}, then
$\|u_0\|_{\dot H^{1/2}}< \|Q\|_{\dot H^{1/2}}$ implies $u(t)$
scatters (numerical data).

\item
Consistency with Conjecture 4 (since $u_0$ is based upon a radial
profile that is \emph{monotonically decreasing}): if $\|u_0\|_{\dot
H^{1/2}}> \|Q\|_{\dot H^{1/2}}$, then $u(t)$ blows-up in finite time
(numerical data) .

\item
Theoretical proof of the Conjecture 2, i.e.,
``$\|u_0\|_{\dot{H}^{1/2}}< \|Q\|_{\dot{H}^{1/2}}$ {\it implies
scattering}'' is false for an arbitrary radial data. We show that it
is possible to produce a radial Gaussian initial data with negative
phase (e.g., for $0 < \lambda < 3.3$ by Theorem \ref{T:Lushnikov})
such that $\|u_0\|_{\dot{H}^{1/2}}< \|Q\|_{\dot{H}^{1/2}}$ and the
solution $u(t)$ blows up in finite time.

\item
The condition ``$\|u_0\|_{L^2}\|\nabla u_0\|_{L^2}<
\|Q\|_{L^2}\|\nabla Q\|_{L^2}$ {\it implies scattering}'' is not
valid (unless $M[u]E[u]<M[Q]E[Q]$ as in Theorem \ref{T:DHR}).

\item
In all three cases (real data, data with positive phase and with
negative phase) Theorems \ref{T:Lushnikov} and
\ref{T:Lushnikov-adapted} provide new range on blow up than was
previously known from our Theorem \ref{T:DHR}.

\item
For the Gaussian initial data with negative phase and small values
of $\alpha$ ($0 < \alpha  \lesssim 2$, see Figure \ref{F:gaussn})
the numerical threshold for scattering and blow up coincides with
the scattering threshold provided by Theorem \ref{T:DHR}. As $\alpha
\to \infty$ the numerical threshold (for both positive and negative
phases) approaches the values $\|u_0\|_{\dot{H}^{1/2}} =
\|Q\|_{\dot{H}^{1/2}}$.
\end{enumerate}

\afterpage{\clearpage}

\section{Super-Gaussian profile}
\label{S:super-Gaussian}

Now we modify the initial Gaussian data to the ``super Gaussian"
profile:
\begin{equation}
 \label{E:supergauss}
u_0(r) = p \, e^{-\alpha \, r^4/2} \, e^{i \gamma \, r^2}, \quad r =
|x|, \quad x \in \cR^3.
\end{equation}
For this initial data we calculate
\begin{align*}
& M[u] = %
\frac{\pi \, p^2}{\alpha^{3/4}} \, \Gamma\left(\frac34\right), %
\qquad \Vert \nabla u_0 \Vert^2_{L^2} =  \frac{\pi^2 \,
p^2}{2\sqrt{2}\,
\alpha^{1/4} \, \Gamma(\frac34)}\left(5+4\frac{\gamma^2}{\alpha} \right),\\
& E[u] = \frac{\pi p^2}{4 \sqrt{2} \alpha^{1/4} \, {\Gamma
\left(\frac34\right)}} \left(\pi\left(5+
4\frac{\gamma^2}{\alpha}\right) - \frac{[\Gamma
\left(\frac34\right)]^2}{2^{1/4}} \frac{p^2}{ \sqrt{\alpha}}
\right), \\
& V(0) %
= \frac{\pi^{2} p^2}{2 \sqrt{2} \, \alpha^{5/4}\, \Gamma
\left(\frac34\right) }, \qquad  V_t(0) = \frac{2 \sqrt{2} \pi^{2}
p^2 \gamma}{\alpha^{5/4}\, \Gamma \left(\frac34\right) }.
\end{align*}
Here, $\ds \Gamma(s+1) = \int_0^\infty e^{-t} \, t^s \, dt. $

\subsection{Real super Gaussian}
When $\gamma=0$, we have
\begin{itemize}
\item
$E[u]>0$ if
\begin{equation}
 \label{E2:posE}
p < \frac{2^{1/8} \sqrt{5 \pi}}{\Gamma \left(\frac34\right)} \,
\alpha^{1/4} \approx 3.53 \, \alpha^{1/4};
\end{equation}

\item
the condition on the mass and gradient $\ds \|u_{0} \|^2_{L^2}
\|\nabla u_{0} \|^2_{L^2} < \|Q\|^2_{L^2}\|\nabla Q\|^2_{L^2}$
implies
\begin{equation}
 \label{E2:MG}
p < \frac{2^{3/8}\, 3^{1/4}}{5^{1/4} \pi^{3/4}} \, \|Q\|_{L^2} \,
\alpha^{1/4} \approx 2.10 \, \alpha^{1/4};
\end{equation}

\item
the mass-energy condition $ M[u]E[u] < M[Q]E[Q]$ is
$$
\frac{\pi^2 p^4}{4 \sqrt{2}\alpha} \left({5 \pi} -
\frac{\left[\Gamma \left(\frac34\right)\right]^2}{2^{1/4}}
\frac{p^2}{ \sqrt{\alpha}} \right) < \frac12 \|Q\|^4_{L^2},
$$
and thus, we obtain
\begin{equation}
 \label{E2:ME}
p < 1.71 \,\alpha^{1/4} \quad \text{and} \quad p > 3.44
\,\alpha^{1/4}.
\end{equation}

\item
(Theorem \ref{T:Lushnikov}) the condition \eqref{E:Lreal} is
\begin{equation}
 \label{E2:green}
p > \frac{2^{1/8} \pi^{1/2}}{\Gamma(\frac34)} \left(5 -
\frac{6}{\pi^2}\left[\Gamma \left(\frac34 \right)\right]^4
\right)^{1/2}
\, \alpha^{1/4} \approx 3.00 \, \alpha^{1/4}.
\end{equation}

\item
(Theorem \ref{T:Lushnikov-adapted}) the condition \eqref{E:LAreal}
is
\begin{equation}
 \label{E2:orange}
p > \frac{2^{1/8} \cdot 5^{1/2} \cdot \pi^{3/4}
C^{7/2}}{\Gamma(\frac34)(C^7 \pi^{1/2} + 4
[\Gamma(\frac34)]^4)^{1/2}} \, \alpha^{1/4} \approx 2.89 \,
\alpha^{1/4}.
\end{equation}

\item
the invariant norm condition $\Vert u_0 \Vert^2_{\dot{H}^{1/2}} <
\Vert Q \Vert^2_{\dot{H}^{1/2}}$ is given in Table \ref{T2:num+H12}.
Here, we first compute the Fourier transform of $u_0$ using
\eqref{E:Fourier-radial-Bessel} and then $\dot{H}^{1/2}$ norm by
\eqref{E:H12norm-radial} which is listed in the second row of the
Table \ref{T2:num+H12} for various $\alpha$. The third row indicates
the values of $p$, denoted by $p_{1/2}$, for the threshold in the
invariant norm condition. We observe that $p_{1/2}/ \sqrt[4]\alpha
\approx 2.02$.

\item
Numerical simulations: the results with the super Gaussian initial
data \eqref{E:supergauss} are in Table \ref{T2:num+H12}. For $p \geq
p_b$ the blow up was observed, for $p \leq p_s$ the solution
dispersed over time. We observe that $p_s/\sqrt[4]\alpha \approx
2.01$ and $p_b\sqrt[4]\alpha \approx 2.02$. For convenience Table
\ref{T2:num+H12} contains the $\dot{H}^{1/2}$ norm calculations.
\end{itemize}
\begin{table}[h]
\begin{tabular}{| c || c | c | c | c | c | c | c | c |}
\hline $\alpha$ & $.25$ & $.5$ & $1$ & $2$ & $4$ & $6$ & $8$ & $10$ \\
\hline \hline $\frac1{p^2}\|u_0\|_{\dot{H}^{1/2}}$
& $13.54$ & $9.58$ & $6.77$ & $4.79$ & $3.39$ & $2.76$ & $2.39$ & $2.14$\\
\hline
\hline $p_{1/2}$ & $1.43$ & $1.70$ & $2.02$ & $2.41$ & $2.86$ & $3.17$ & $3.41$ & $3.60$\\
\hline
\hline $p_s$ & $1.42$ & $1.69$ & $2.01$ & $2.39$ & $2.85$ & $3.15$ & $3.39$ & $3.58$\\
\hline $p_b$ & $1.43$ & $1.70$ & $2.02$ & $2.40$ & $2.86$ & $3.16$ & $3.40$ & $3.59$\\
\hline
\end{tabular}
\bigskip
\caption{The $\dot{H}^{1/2}$ norm of the super Gaussian initial data
depending on $\alpha$ and values of $p$ for the $\dot{H}^{1/2}$
condition as well as the numerical results for blow up threshold and
global existence - $p \geq p_b$ and $p \leq p_s$, correspondingly. }
  \label{T2:num+H12}
\end{table}

To compare the conditions \eqref{E2:posE} - \eqref{E2:orange} with
numerical data, we graph the dependence of $p$ on $\alpha$ in Figure
\ref{F:supergauss-nophase}. For clarity of presentation we plot $\ds
\frac{p}{\sqrt[4] \alpha}$ on the vertical axis.

\begin{figure}[p]
\includegraphics[scale=.81]{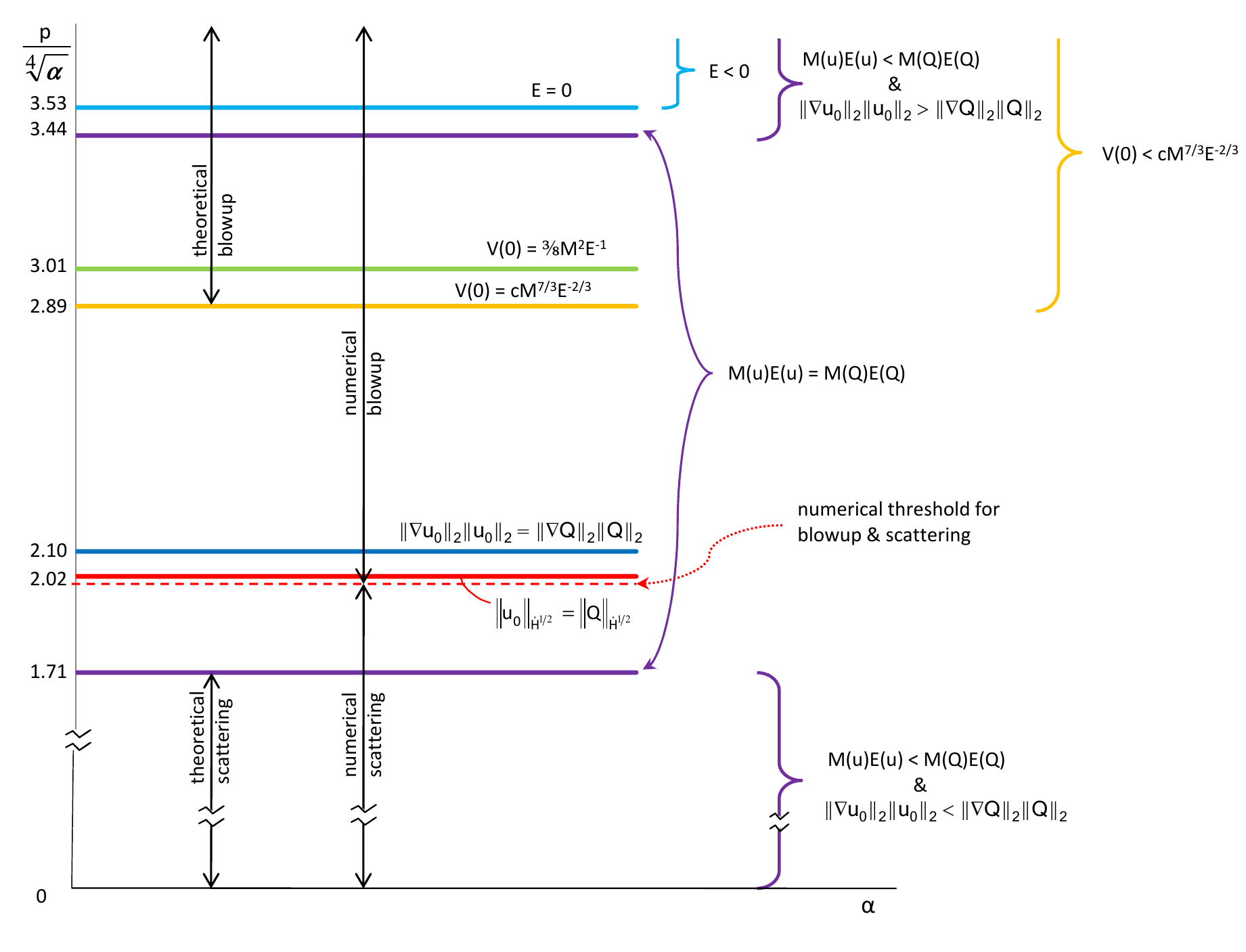}  %
\caption{Global behavior of the solutions with the super Gaussian
initial data $u_0(r) = p \, e^{-\alpha r^4/2}$. The line denoted by
$V(0)=3/8 M^2 E^{-1}$ is the threshold for blow up from Theorem
\ref{T:Lushnikov}, see \eqref{E2:green}; similarly, the line denoted
by $V(0)=c M^{7/3} E^{-2/3}$ is the threshold for blow up from
Theorem \ref{T:Lushnikov-adapted}, see \eqref{E2:orange}. The line
``theoretical scattering" is given by Thm. \ref{T:DHR}, see
\eqref{E2:ME}. The numerical threshold (dashed line) comes from
Table \ref{T2:num+H12} normalized by $\sqrt[4] \alpha $. For all
other values refer to text.}
 \label{F:supergauss-nophase}
\end{figure}

\subsection{Super Gaussian with a quadratic phase}
When $\gamma \neq 0$, %
we have
\begin{itemize}
\item
$E[u]>0$ if
\begin{equation}
 \label{E2:posEphase}
p < \frac{2^{1/8} \pi^{1/2}}{\Gamma \left(\frac34\right)}
\left(5+4\frac{\gamma^2}{\alpha} \right)^{1/2} \, \alpha^{1/4}
\approx 1.58 \, \alpha^{1/4} \,
\left(5+4\frac{\gamma^2}{\alpha}\right)^{1/2};
\end{equation}

\item
the condition on the mass and gradient $\ds \|u_{0} \|^2_{L^2}
\|\nabla u_{0} \|^2_{L^2} < \|Q\|^2_{L^2}\|\nabla Q\|^2_{L^2}$
implies
\begin{equation}
 \label{E2:MGphase}
p < \frac{2^{3/8} \,3^{1/2}}{\pi^{3/4}
(5+4\frac{\gamma^2}{\alpha})^{1/4}\,} \, \|Q\|_{L^2} \, \alpha^{1/4}
\approx 3.15 \, \alpha^{1/4} \, \left(5+4\frac{\gamma^2}{\alpha}
\right)^{-1/4};
\end{equation}

\item
the mass-energy threshold $ M[u]E[u] = M[Q]E[Q]$ is
\begin{equation}
 \label{E2:MEphase}
\frac{\pi^2 p^4}{4 \sqrt{2}\alpha} \left(\pi\left(5 + 4
\frac{\gamma^2}{\alpha} \right) - \frac{\left[\Gamma
\left(\frac34\right)\right]^2}{2^{1/4}} \frac{p^2}{\sqrt{\alpha}}
\right) = \frac12 \, \|Q\|^4_{L^2},
\end{equation}
the real positive zeros of the above expression when $\gamma = \pm
\frac12$ are in Table \ref{T:1-2super}.
\begin{table}[h]
\begin{tabular}{| c || c | c | c | c | c | c | c | c |}
\hline $\alpha$ & $.25$ & $.5$ & $1$ & $2$ & $4$ & $6$ & $8$ & $10$ \\
\hline $p_1$ & $1.00$ & $1.28$ & $1.60$ & $1.96$ & $2.37$ & $2.64$ & $2.85$ & $3.02$\\
\hline $p_2$ & $3.34$ & $3.48$ & $3.81$ & $4.32$ & $5.01$ & $5.49$ & $5.88$ & $6.20$\\
\hline
\end{tabular}
\bigskip
\caption{ The positive real roots of the equation in
\eqref{E2:MEphase}. }
 \label{T:1-2super}
\end{table}

\item
for the invariant norm condition $\Vert u_0 \Vert^2_{\dot{H}^{1/2}}
< \Vert Q \Vert^2_{\dot{H}^{1/2}}$, we compute the Fourier Transform
of $\ds u_{0}(r) = p \, e^{- \alpha r^4/2} \, e^{i \gamma \, r^2}$
again by \eqref{E:Fourier-radial-Bessel} and then $\dot{H}^{1/2}$
norm by \eqref{E:H12norm-radial}. The values of $p$ when the
condition $\|u_0\|_{\dot{H}^{1/2}} = \|Q\|_{\dot{H}^{1/2}}$ are in
Table \ref{T:H12-super-phase} as well.

\begin{table}[h]
\begin{tabular}{| c || c | c | c | c | c | c | c | c |}
\hline $\alpha$ & $.25$ & $.5$ & $1$ & $2$ & $4$ & $6$ & $8$ & $10$ \\
\hline $\frac1{p^2}\|u_0\|_{\dot{H}^{1/2}}$
& $17.53$ & $11.03$ & $7.29$ & $4.97$ & $3.45$ & $2.80$ & $2.42$ & $2.16$\\
\hline $p_{1/2}$ & $1.26$ & $1.59$ & $1.95$ & $2.36$ & $2.83$ & $3.15$ & $3.39$ & $3.59$\\
\hline
\end{tabular}
\bigskip
\caption{ The $\dot{H}^{1/2}$ norm of the super Gaussian initial
data with the phase $\gamma = \pm \frac12$ depending on $\alpha$ is
listed in the second row and values of $p$ for this condition are in
the last. }
 \label{T:H12-super-phase}
\end{table}

\item
(Theorem \ref{T:Lushnikov}) the condition $\omega \leq 1$ from
\eqref{E:L+} is
\begin{equation}
 \label{E2:greenphase}
p > \frac{2^{1/8} \pi^{1/2}}{\Gamma \left(\frac34 \right)} \left(
\left(5+4\frac{\gamma^2}{\alpha}\right) -\frac{6}{\pi^2}
\left[\Gamma \left(\frac34 \right)\right]^4 \right)^{1/2}
\alpha^{1/4} \approx 1.58 \left(3.63 + 4 \frac{\gamma^2}{\alpha}
\right)^{1/2} \alpha^{1/4}, %
\end{equation}
and the condition \eqref{E:L+simple} (with $y = p/\alpha^{1/4}$) is
\begin{equation}
 \label{E2:Lsuperphase}
\frac{5\pi^2}{6 [\Gamma(\frac34)]^4} - 3 -\frac{\pi}{2^{5/4} 3
[\Gamma(\frac34)]^2} \, y^2 +
\frac{2^{3/2}3^{1/2}[\Gamma(\frac34)]^2}{\pi\left(5
+\frac{4\gamma^2}{\alpha}
-\frac{[\Gamma(\frac34)]^2}{2^{1/4}\, \pi}
\, y^2 \right)^{1/2}} > 0,
\end{equation}
the positive real zeros of the left hand side with $\gamma =
\pm\frac12$ are in Table \ref{T2:Lsuper}.
\begin{table}[h]
\begin{tabular}{| c || c | c | c | c | c | c | c | c |}
\hline $\alpha$ & $.25$ & $.5$ & $1$ & $2$ & $4$ & $6$ & $8$ & $10$ \\
\hline $p_b$ & $1.62$ & $2.04$ & $2.56$ & $3.18$ & $3.90$ & $4.38$ & $4.75$ & $5.05$\\
\hline $p_t$ & $3.32$ & $3.43$ & $3.71$ & $4.13$ & $4.69$ & $5.10$ & $5.42$ & $5.68$\\
\hline
\end{tabular}
\bigskip
\caption{The positive real roots, $p_t$ and $p_b$, of the function
in \eqref{E2:Lsuperphase} for the condition \eqref{E:L+simple}
(Theorem \ref{T:Lushnikov}). }
 \label{T2:Lsuper}
\end{table}

\item
(Theorem \ref{T:Lushnikov-adapted}) the condition \eqref{E:LA+} is
\begin{equation}
 \label{E2:orangedash}
\begin{aligned}
p &> \frac{2^{1/8} \pi^{3/4} \, C^{7/2}}{\Gamma(\frac34) \left(C^7
\pi^{1/2} + 4[\Gamma(\frac34)]^4 \right)^{1/2}} \, \left(1+4
\frac{\gamma^2}{\alpha} \right)^{1/2} \alpha^{1/4} \\
&\approx 1.29 \, \left(1+4 \frac{\gamma^2}{\alpha} \right)^{1/2} \,
\alpha^{1/4},
\end{aligned}
\end{equation}
and the condition \eqref{E:LA+simple} with $y = p/\alpha^{1/4}$ is
\begin{equation}
\label{E2:LAsuperphase}
\begin{aligned}
\left(\frac{2^{7/4}[\Gamma(\frac34)]^2}{C^7 \pi^{1/2}} -
\frac1{2^{5/4}[\Gamma(\frac34)]^2} \right)y^2 + \frac{5 \pi}{2
[\Gamma(\frac34)]^4} \qquad & \\
-\frac{3 \cdot 2^{1/6}}{C^{14/3}} \left(\left(5 +4
\frac{\gamma^2}{\alpha}\right)y^4 - \frac{[\Gamma(\frac34
)]^2}{2^{1/4} \pi} y^6\right)^{1/3} & > 0, 
\end{aligned}
\end{equation}
the positive real zeros of the left hand side with $\gamma =
\pm\frac12$ are in Table \ref{T2:LAsuperphase}.
\begin{table}[h]
\begin{tabular}{| c || c | c | c | c | c | c | c | c |}
\hline $\alpha$ & $.25$ & $.5$ & $1$ & $2$ & $4$ & $6$ & $8$ & $10$ \\
\hline $p_b$ & $1.41$ & $1.84$ & $2.36$ & $2.50$ & $3.68$ & $4.15$ & $4.51$ & $4.80$\\
\hline $p_t$ & $3.30$ & $3.41$ & $3.66$ & $4.05$ & $4.59$ & $4.97$ & $5.27$ & $5.53$\\
\hline
\end{tabular}
\bigskip
\caption{The positive real roots, $p_t$ and $p_b$, of the function
in \eqref{E2:LAsuperphase} for the condition \eqref{E:LA+simple}
(Theorem \ref{T:Lushnikov-adapted}). }
 \label{T2:LAsuperphase}
\end{table}
\item
Numerical simulations: the results for the super Gaussian with the
quadratic phase $\gamma = \pm\frac12$ are given in Table
\ref{T2:ph+super}. The rows denoted by $p_b^+$, $p_s^+$, $p_b^-$ and
$p_s^-$ are thresholds for blow up and global existence/scattering
for the positive phase $\gamma = +\frac12$ and negative phase
$\gamma = - \frac12$, correspondingly.
\end{itemize}
To compare the conditions \eqref{E2:posEphase} -
\eqref{E2:LAsuperphase} with the numerics, %
we graph the dependence of $p$ on $\alpha$ in Figures
\ref{F:supergaussp} and \ref{F:supergaussn} separately for positive
and negative values of the phase $\gamma = \pm \frac12$. We plot
$\ds \frac{p}{\sqrt[4] \alpha}$ on the vertical axis to observe the
asymptotics as $\alpha \to \infty$ .
\begin{table}[h]
\begin{tabular}{| c || c | c | c | c | c | c | c | c |}
\hline $\alpha$ & $.25$ & $.5$ & $1$ & $2$ & $4$ & $6$ & $8$ & $10$ \\
\hline
\hline $p_s^+$ & $2.18$ & $2.27$ & $2.47$ & $2.76$ & $3.14$ & $3.41$ & $3.63$ & $3.81$\\
\hline $p_b^+$ & $2.19$ & $2.28$ & $2.48$ & $2.77$ & $3.15$ & $3.42$ & $3.64$ & $3.82$\\
\hline
\hline $p_s^-$ & $1.02$ & $1.33$ & $1.69$ & $2.12$ & $2.60$ & $2.92$ & $3.17$ & $3.38$\\
\hline $p_b^-$ & $1.03$ & $1.34$ & $1.70$ & $2.11$ & $2.61$ & $2.93$ & $3.18$ & $3.39$\\
\hline
\end{tabular}
\bigskip
\caption{Numerical results for blow up threshold ($p \geq
p_b^{\pm}$) and global existence ($p \leq p_s^{\pm}$). The sign in
the superscript indicates the positive or negative sign of $\gamma$,
correspondingly.}
 \label{T2:ph+super}
\end{table}

\begin{figure}[p]
\includegraphics[scale=.90]{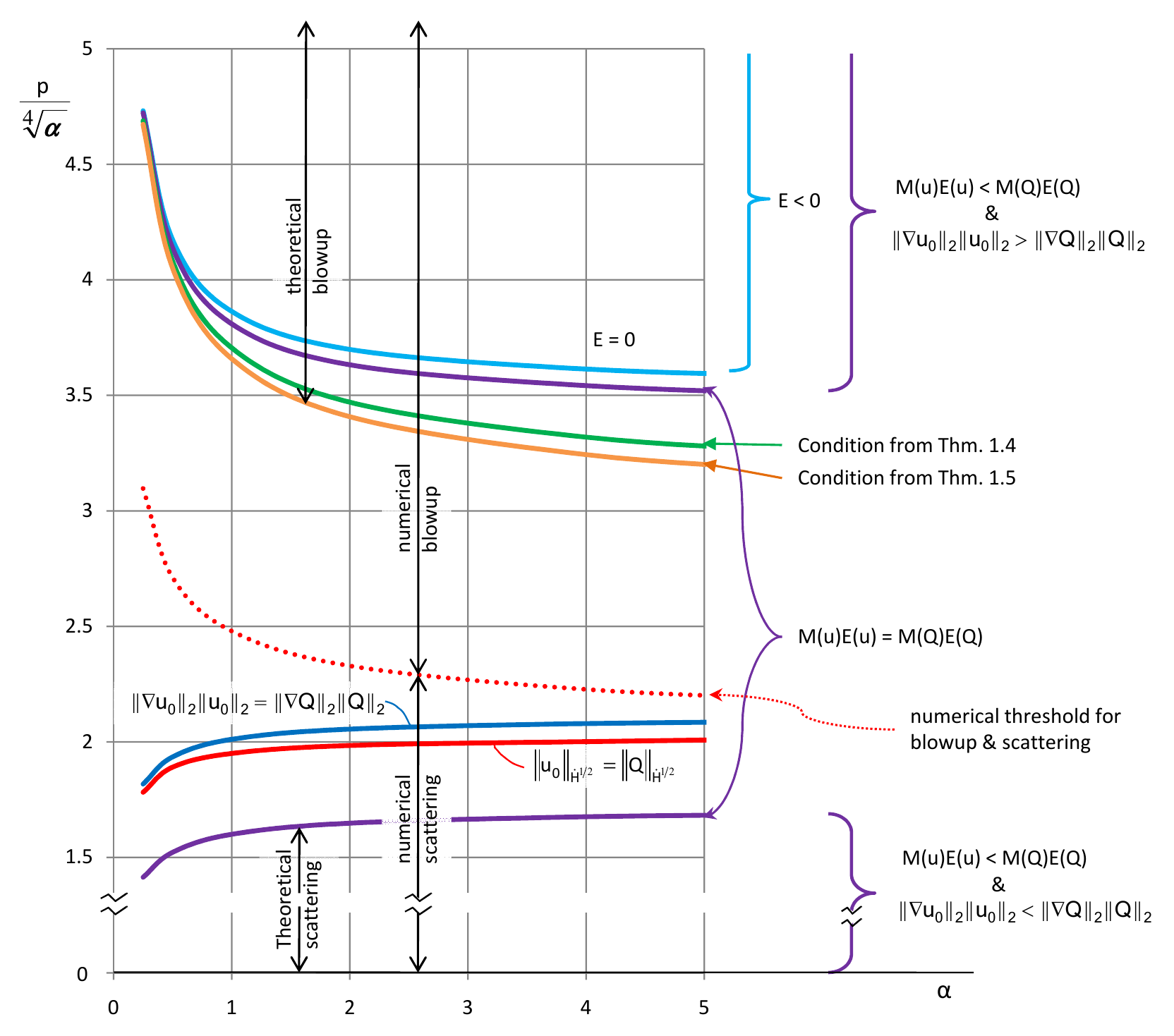}  %
\caption{Global behavior of solutions to \eqref{E:NLSa} with the
super Gaussian initial data with the positive quadratic phase
$u_0(r) = p \, e^{-\alpha r^4/2} \, e^{+ i \frac12 r^2}$. The curve
denoted by ``Condition from Thm. \ref{T:Lushnikov}" comes from the
largest positive root of the equation in \eqref{E2:Lsuperphase},
namely, values $p_t/\sqrt[4] \alpha$ in Table \ref{T2:Lsuper}.
Similarly, the curve denoted by ``Condition from Thm.
\ref{T:Lushnikov-adapted}" comes from the largest positive root of
the equation in \eqref{E2:LAsuperphase}, namely, values
$p_t/\sqrt[4] \alpha$ in Table \ref{T2:LAsuperphase}. The dotted
line ``numerical threshold" is plotted from the values
$p_s^+/\sqrt[4] \alpha$ and $p_b^+/\sqrt[4] \alpha$
(indistinguishable) from Table \ref{T2:ph+super}. The curve
``theoretical scattering" is provided by Thm. \ref{T:DHR}, see
\eqref{E2:MEphase} and values $p_1$ in Table \ref{T:1-2super}.}
 \label{F:supergaussn}
\end{figure}

\begin{figure}[p]
\includegraphics[scale=.90]{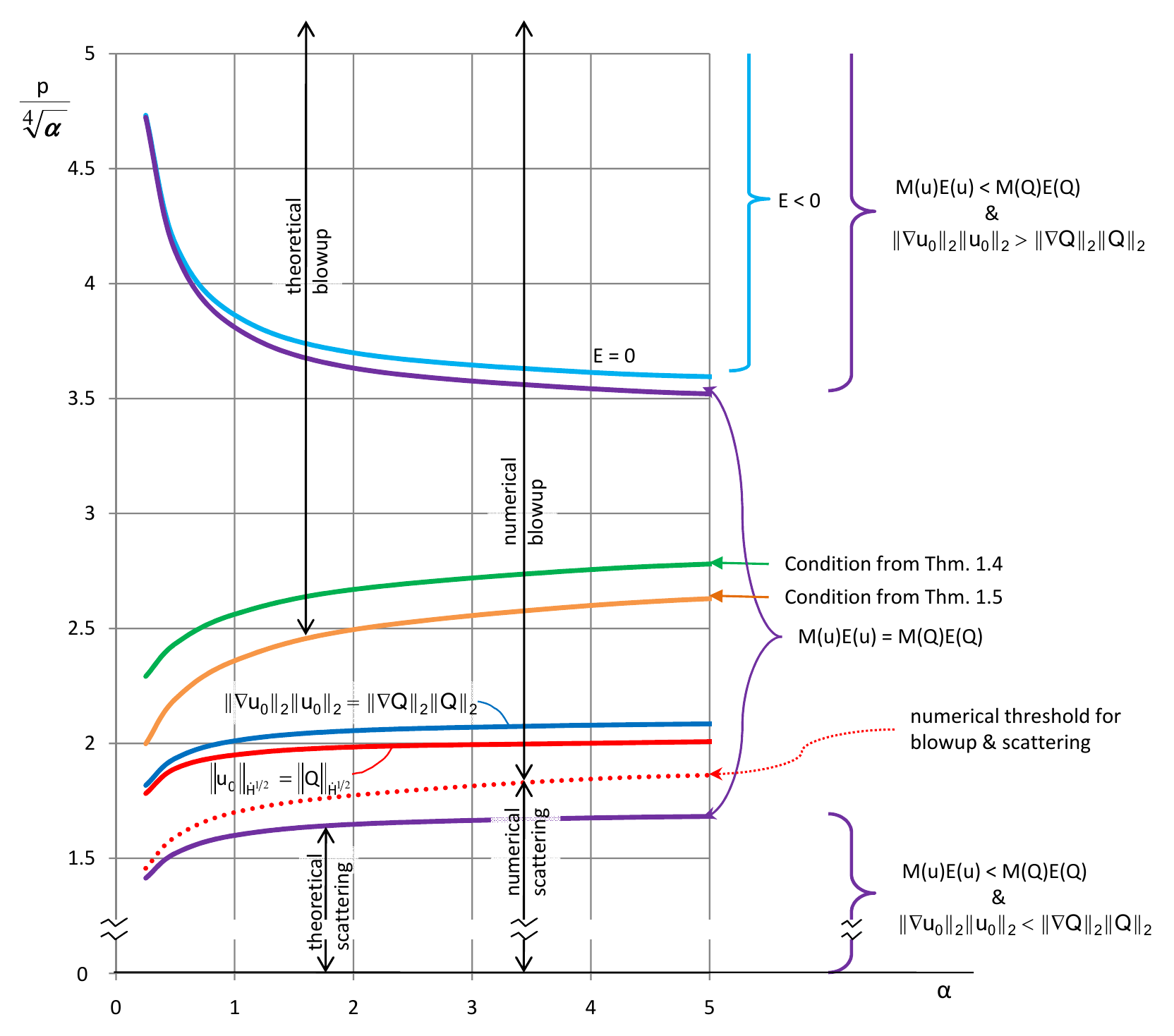}  %
\caption{Global behavior of solutions to \eqref{E:NLSa} with the
super Gaussian initial data with the negative quadratic phase
$u_0(r) = p \, e^{-\alpha r^4/2} \, e^{- i \frac12 r^2}$. The curve
denoted by ``Condition from Thm. \ref{T:Lushnikov}" comes from the
smallest positive root of the equation in \eqref{E2:Lsuperphase},
namely, values $p_b/\sqrt[4] \alpha$ in Table \ref{T2:Lsuper}.
Similarly, the curve denoted by ``Condition from Thm.
\ref{T:Lushnikov-adapted}" comes from the smallest positive root of
the equation in \eqref{E2:LAsuperphase}, namely, values
$p_b/\sqrt[4] \alpha$ in Table \ref{T2:LAsuperphase}. The dotted
line ``numerical threshold" is plotted from the values
$p_s^-/\sqrt[4] \alpha$ and $p_b^-/\sqrt[4] \alpha$
(indistinguishable) from Table \ref{T2:ph+super}. The curve
``theoretical scattering" is the same as in Fig. \ref{F:supergaussp}
and is provided by Thm. \ref{T:DHR}, see \eqref{E2:MEphase} and
values $p_1$ in Table \ref{T:1-2super}.}
 \label{F:supergaussp}
\end{figure}

\begin{remark}
Numerical simulations showed that, for example, when $\alpha=2$ for
$2.3995 \leq p \leq 6$ the blow up occurs over the origin. When $p
\geq 10$ the blow up happens on a {\it contracting sphere}. This
phenomena we originally discussed in our heuristic analysis in
\cite{HR1}, it was also numerically obtained in
\cite{F07}.
\end{remark}

\afterpage{\clearpage}

\subsection{Conclusions}
The above computations show
\begin{enumerate}
\item
Consistency with Conjecture 3: if $u_0$ is \emph{real}, then
$\|u_0\|_{\dot H^{1/2}}< \|Q\|_{\dot H^{1/2}}$ implies $u(t)$
scatters.

\item
Consistency with Conjecture 4 (since $u_0$ is based upon a radial
profile that is \emph{monotonically decreasing}): if $\|u_0\|_{\dot
H^{1/2}}> \|Q\|_{\dot H^{1/2}}$, then $u(t)$ blows-up in finite
time.

\item
The condition ``$\|u_0\|_{L^2}\|\nabla u_0\|_{L^2}<
\|Q\|_{L^2}\|\nabla Q\|_{L^2}$ {\it implies scattering}'' is not
valid (unless $M[u]E[u]<M[Q]E[Q]$ as in Theorem \ref{T:DHR}).

\item
In all three cases (real data, data with positive phase and with
negative phase) Theorems \ref{T:Lushnikov} and
\ref{T:Lushnikov-adapted} provide new range on blow up than was
previously known from our Theorem \ref{T:DHR}. Furthermore, Theorem
\ref{T:Lushnikov-adapted} provides the best results.
\end{enumerate}

\section{Off-centered Gaussian profile}
\label{S:off-Gaussian}

Next we consider the off-centered Gaussian profile:
\begin{equation}
 \label{E:offgauss}
u_0(x) = p \, r^2 \, e^{-\alpha \,r^2} \, e^{i \gamma \,r^2}, \quad
r = |x|, \, x \in \cR^3.
\end{equation}
For this initial data we calculate
\begin{align*}
& M[u] = \frac{15 \pi^{3/2} p^2}{32 \sqrt 2 \alpha^{7/2}}, \quad
\|\nabla u_0\|^2_{L^2} = \frac{3 \pi^{3/2} p^2}{32 \sqrt 2
\alpha^{5/2}} \left(11+35 \frac{\gamma^2}{\alpha^2} \right),\\
& E[u] = \frac{3 \pi^{3/2} p^2}{64 \sqrt 2 \alpha^{5/2}} \left(11+35
\frac{\gamma^2}{\alpha^2}
- \frac{315 p^2}{2^{10} \sqrt 2 \alpha^3} \right), \\
& V(0) = \frac{105 \,\pi^{3/2} p^2}{128 \sqrt 2 \,\alpha^{9/2}},
\quad
V_t(0) = \frac{105 \,\gamma \, \pi^{3/2} p^2}{16 \sqrt 2 \,\alpha^{9/2}}\\
& \|u_0\|^2_{\dot{H}^{1/2}} = 32 \pi^2 \int_0^{\infty} R \left|
\int_0^\infty r^3 \sin (2\pi R r) e^{-\alpha r^2} \, e^{i \gamma
r^2} \, dr \right|^2 \, dR = \frac{3\pi p^2}{4 \alpha^3} \frac{(1
+2\frac{\gamma^2}{\alpha^2})}{(1+ \frac{\gamma^2}{\alpha^2})^{1/2}}.
\end{align*}
\subsection{Real off-centered Gaussian}
When $\gamma=0$, we have
\begin{itemize}
\item
$E[u]>0$ if
\begin{equation}
 \label{E3:posE}
p < \frac{32}3 \,\left(\frac{11\sqrt 2}{35}\right)^{1/2} \,
\alpha^{3/2} \approx 7.11 \, \alpha^{3/2};
\end{equation}

\item
the condition on the mass and gradient $\ds \|u_{0} \|^2_{L^2}
\|\nabla u_{0} \|^2_{L^2} < \|Q\|^2_{L^2}\|\nabla Q\|^2_{L^2}$
implies
\begin{equation}
 \label{E3:MG}
p < \left(\frac{2^{11}}{165 \pi^3} \right)^{1/4} \|Q\|_2 \,
\alpha^{3/2} \approx 3.46 \, %
\alpha^{3/2};
\end{equation}

\item
the mass-energy condition $ M[u]E[u] < M[Q]E[Q]$ is
$$
\frac{45 \pi^3 p^4}{2^{12} \alpha^6} \left(11 - \frac{315
p^2}{2^{10} \sqrt 2 \alpha^3} \right) < \frac12 \|Q\|^4_{L^2},
$$
and thus, we obtain
\begin{equation}
 \label{E3:ME}
p < 2.73 \,\alpha^{3/2} \quad \text{and} \quad p > 7.04
\,\alpha^{3/2}.
\end{equation}

\item
the invariant norm condition $\Vert u_0 \Vert^2_{\dot{H}^{1/2}} <
\Vert Q \Vert^2_{\dot{H}^{1/2}}$ is
\begin{equation}
 \label{E3:H12}
p < \frac{2}{(3 \, \pi)^{1/2}} \, \Vert Q \Vert_{\dot{H}^{1/2}} \,
\alpha^{3/2} \approx 3.43 \, \alpha^{3/2}.
\end{equation}

\item
(Theorems \ref{T:Lushnikov}) the condition \eqref{E:Lreal} is %
\begin{equation}
 \label{E3:green}
p > \frac{32}{21} \left( \frac{62 \sqrt 2}{5} \right)^{1/2} \,
\alpha^{3/2} \approx 6.38 \, \alpha^{3/2}
\end{equation}

\item
(Theorems \ref{T:Lushnikov-adapted}) the condition \eqref{E:LAreal}
is
\begin{equation}
 \label{E3:orange}
p > \frac{2^5 \cdot 2^{1/4}\cdot  7^{2}\cdot  11^{1/2}}{3 \cdot
5^{1/2}(7^5 + 2^5 \cdot 3^{3/2} \cdot 5^2 \pi^{1/2})^{1/2}} \,
\alpha^{3/2} \approx 5.93 \, \alpha^{3/2},
\end{equation}

\item
Numerical simulations: the results for the off-centered Gaussian
initial data \eqref{E:offgauss} are in Table \ref{T:off-nophase}.
For $p \geq p_b$ the blow up was observed, for $p \leq p_s$ the
solution dispersed over time. We obtain that $p_s/\alpha^{3/2}
\approx 3.57$ and $p_b/\alpha^{3/2} \approx 3.58$.
\end{itemize}
\begin{table}[h]
\begin{tabular}{| c || c | c | c | c | c | c | c | c |}
\hline $\alpha$ & $.25$ & $.5$ & $1$ & $1.5$ & $2$ & $3$ & $4$ & $5$ \\
\hline
\hline $p_s$ & $0.44$ & $1.26$ & $3.57$ & $6.57$ & $10.11$ & $18.5$ & $28.4$ & $40.0$\\
\hline $p_b$ & $0.45$ & $1.27$ & $3.58$ & $6.58$ & $10.12$ & $18.6$ & $28.5$ & $40.1$\\
\hline
\end{tabular}
\bigskip
\caption{ The numerical results for blowing up ($p \geq p_b$) and
global existence ($p \leq p_s$) for the off-centered Gaussian
initial data depending on the parameter $\alpha$. }
 \label{T:off-nophase}
\end{table}

To compare all the above conditions \eqref{E3:posE} -
\eqref{E3:orange} with the numerics, we graph the dependence of $p$
on $\alpha$ in Figure \ref{F:all-gauss-nophase}. For clarity of
presentation we plot $\ds \frac{p}{\alpha^{3/2}}$ on the vertical
axis.

\begin{figure}[p]
\includegraphics[scale=.81]{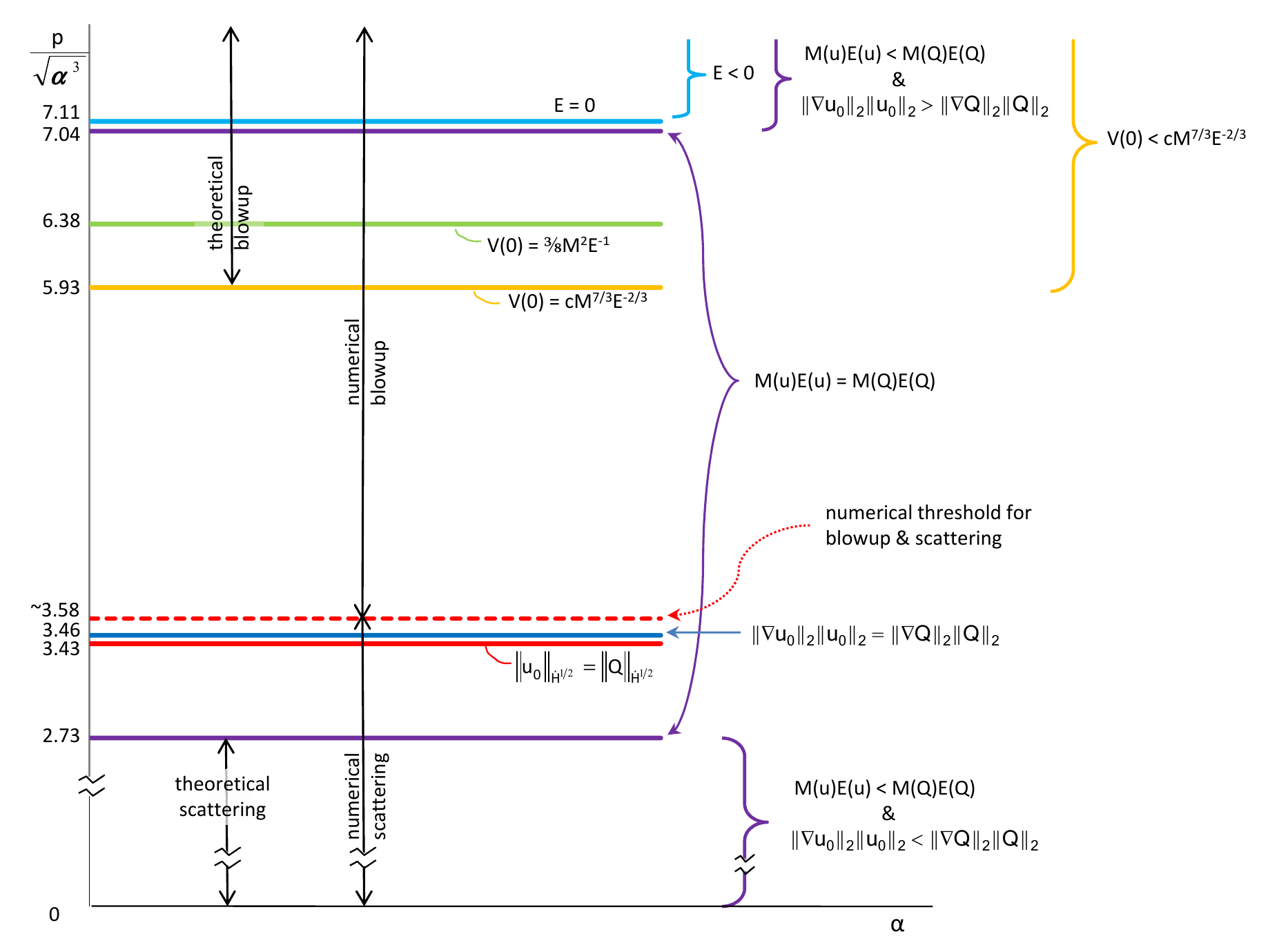}  %
\caption{Global behavior of the solutions to \eqref{E:NLSa} with the
real off-centered Gaussian initial data \eqref{E:offgauss}. The line
denoted by $V(0)=3/8 M^2 E^{-1}$ is the threshold for blow up from
Theorem \ref{T:Lushnikov}, see \eqref{E3:green}; similarly, the line
denoted by $V(0)=c M^{7/3} E^{-2/3}$ is the threshold for blow up
from Theorem \ref{T:Lushnikov-adapted}, see \eqref{E3:orange}. The
line ``theoretical scattering" is given by Thm. \ref{T:DHR}, see
\eqref{E3:ME}. Observe that the numerical threshold (dashed line,
values are given in Table \ref{T:off-nophase}) is away from the line
$\|u_0\|_{\dot{H}^{1/2}} = \|Q\|_{\dot{H}^{1/2}}$, see
\eqref{E3:H12}, which is different compared to real Gaussian and
real super Gaussian initial data, see Fig. \ref{F:all-gauss-nophase}
and \ref{F:supergauss-nophase}. }
 \label{F:all-off-gauss-nophase}
\end{figure}

\afterpage{\clearpage}

\subsection{Off-centered Gaussian with quadratic phase}
When $\gamma \neq 0$, we have
\begin{itemize}
\item
$E[u]>0$ if
\begin{equation}
 \label{E3:posEphase}
p < \frac{32}3 \left(\frac{\sqrt 2}{35}\right)^{1/2} \,\left(11 +
{35} \frac{\gamma^2}{\alpha^2}\right)^{1/2} \, \alpha^{3/2} \approx
2.14 \, \left(11 + {35} \frac{\gamma^2}{\alpha^2}\right)^{1/2} \,
\alpha^{3/2};
\end{equation}

\item
the condition on the mass and gradient $\ds \|u_{0} \|^2_{L^2}
\|\nabla u_{0} \|^2_{L^2} < \|Q\|^2_{L^2}\|\nabla Q\|^2_{L^2}$
implies
\begin{equation}
 \label{E3:MGphase}
p < \left(\frac{2^{11}}{15 \pi^3}\right)^{1/4} \|Q\|_2 {\left(11+35
\frac{\gamma^2}{\alpha^2}\right)^{-1/4}}\, {\alpha^{3/2}} \approx
6.29 \,{\left(11+35 \frac{\gamma^2}{\alpha^2}\right)^{-1/4}}
\alpha^{3/2};
\end{equation}

\item
the mass-energy condition $ M[u]E[u] < M[Q]E[Q]$ is
\begin{equation}
 \label{E3:MEphase}
\frac{45 \pi^3 p^4}{2^{12} \alpha^6} \left(11 + 35
\frac{\gamma^2}{\alpha^2}- \frac{315 p^2}{2^{10} \sqrt 2 \alpha^3}
\right) - \frac12 \, \|Q\|^4_{2} < 0,
\end{equation}
the real positive zeros of the left hand side when $\gamma = \pm
\frac12$ given in Table \ref{T3:MEphase}.
\begin{table}[h]
\begin{tabular}{| c || c | c | c | c | c | c | c | c |}
\hline $\alpha$ & $.25$ & $.5$ & $1$ & $1.5$ & $2$ & $3$ & $4$ & $5$\\ %
\hline
\hline $p_1$ & $0.17$ & $0.65$ & $2.3$ & $4.58$ & $7.31$ & $13.84$ & $21.54$ & $30.27$\\
\hline $p_2$ & $3.29$ & $5.14$ & $9.51$ & $15.14$ & $21.9$ & $38.26$ & $57.8$ & $80.05$\\
\hline
\end{tabular}
\bigskip
\caption{Real positive zeros of the function in \eqref{E3:MEphase}
when $\gamma = \pm \frac12$.}
 \label{T3:MEphase}
\end{table}

\item
the invariant norm condition $\Vert u_0 \Vert^2_{\dot{H}^{1/2}} <
\Vert Q \Vert^2_{\dot{H}^{1/2}}$ is
\begin{equation}
 \label{E3:H12phase}
p < \frac{2}{(3 \, \pi)^{1/2}} \, \Vert Q \Vert_{\dot{H}^{1/2}}
\frac{(1+\frac{\gamma^2}{\alpha^2})^{1/4}}{(1+2
\frac{\gamma^2}{\alpha^2})^{1/2}}\, \alpha^{3/2} \approx 3.43
\frac{(1+\frac{\gamma^2}{\alpha^2})^{1/4}}{(1+2
\frac{\gamma^2}{\alpha^2})^{1/2}}\, \alpha^{3/2}.
\end{equation}

\item
(Theorems \ref{T:Lushnikov}) the condition \eqref{E:Lreal} is
\begin{equation}
 \label{E3:LAgreenphase}
p > \frac{32 \cdot 2^{1/4}}{3} \left( \frac{62}{5\cdot7^2} +
\frac{\gamma^2}{\alpha^2} \right)^{1/2} \, \alpha^{3/2} \approx
12.68 \,  \left( 0.253 + \frac{\gamma^2}{\alpha^2} \right)\,
\alpha^{3/2}
\end{equation}
and the condition \eqref{E:L+simple} with $y=p/\alpha^{3/2}$ is
\begin{equation}
 \label{E3:Lsimple}
\frac{2^6 \sqrt{105}}{7 \sqrt{2^{11}(11 + \frac{35 \gamma^2
}{\alpha^2})-315 \sqrt 2 y^2}} -\frac{3\cdot7^2}{2^{11}} y^2
+\frac{32}{15 \sqrt 2} > 0,
\end{equation}
the positive real roots of which when $\gamma = \pm \frac12$ are in
Table \ref{T3:Lsimple}.
\begin{table}[h]
\begin{tabular}{| c || c | c | c | c | c | c | c | c |}
\hline $\alpha$ & $.25$ & $.5$ & $1$ & $1.5$ & $2$ & $3$ & $4$ & $5$\\ %
\hline
\hline $p_b$ & $0.60$ & $1.79$ & $5.37$ & $10.25$ & $16.18$ & $30.63$ & $47.97$ & $67.80$\\ %
\hline $p_t$ & $3.29$ & $5.14$ & $9.49$ & $14.99$ & $21.50$ & $37.00$ & $55.27$ & $75.91$\\ %
\hline
\end{tabular}
\bigskip
\caption{The positive real roots of the function in
\eqref{E3:Lsimple} when $\gamma = \pm\frac12$ for the off-centered
Gaussian initial data. }
 \label{T3:Lsimple}
\end{table}

\item
(Theorems \ref{T:Lushnikov-adapted}) the condition \eqref{E:LAreal}
is
\begin{equation}
 \label{E3:LAphase}
\begin{aligned}
p &> \frac{2^5 \cdot 2^{1/4} \cdot  7^{2}}{3 \cdot 5^{1/2}(7^5 + 2^5
\cdot 3^{3/2} \cdot 5^2 \pi^{1/2})^{1/2}} \left(11+35
\frac{\gamma^2}{\alpha^2} \right)^{1/2} \, \alpha^{3/2} \\
&\approx 1.79
\left(11+35 \frac{\gamma^2}{\alpha^2} \right)^{1/2} \alpha^{3/2}
\end{aligned}
\end{equation}
and the condition \eqref{E:LA+simple} with $y = p/\alpha^{3/2}$ is
\begin{equation}
 \label{E3:LAorangephase}
\begin{aligned}
\left(\frac{(2^6 \cdot 3^{3/2} \cdot 5^{2} \pi^{1/2}-7^5)}{2^{5/2}
\cdot 105}y^2+ \frac{2^8\cdot 7^3\cdot 11}{3^3\cdot 5^2} \right)^3 &
\\ -84 \pi \left( 2^{11} (11+35\frac{\gamma^2}{\alpha^2}) - 315 \sqrt 2
y^2 \right) y^4 & > 0 \,,
\end{aligned}
\end{equation}
the positive real zeros of which with $\gamma=\pm\frac12$ is in
Table \ref{T3:LAsimple}.
\begin{table}[h]
\begin{tabular}{| c || c | c | c | c | c | c | c | c |}
\hline $\alpha$ & $.25$ & $.5$ & $1$ & $1.5$ & $2$ & $3$ & $4$ & $5$\\ %
\hline
\hline $p_b$ & $0.30$ & $1.16$ & $4.15$ & $8.46$ & $13.79$ & $26.94$ & $42.92$ & $61.04$\\ %
\hline $p_t$ & $3.29$ & $5.14$ & $9.45$ & $14.80$ & $21.05$ & $35.75$ & $52.97$ & $72.38$\\ %
\hline
\end{tabular}
\bigskip
\caption{The positive real roots of the function in
\eqref{E3:LAorangephase} when $\gamma = \pm\frac12$ for the
off-centered Gaussian initial data. }
 \label{T3:LAsimple}
\end{table}

\item
Numerical simulations: the results for the off-centered Gaussian
initial data with nonzero phase \eqref{E:offgauss} are in Table
\ref{T3:num-phase}. For $p \geq p_b$ the blow up was observed, for
$p \leq p_s$ the solution dispersed over time.
\end{itemize}

\begin{table}[h]
\begin{tabular}{| c || c | c | c | c | c | c | c | c |}
\hline $\alpha$ & $.25$ & $.5$ & $1$ & $1.5$ & $2$ & $3$ & $4$ & $5$ \\
\hline
\hline $p_s^+$ & n/a & n/a & $5.78$ & $8.93$ & $12.68$ & $21.5$ & $31.9$ & $43.6$\\
\hline $p_b^+$ & n/a & n/a & $5.79$ & $8.94$ & $12.69$ & $21.6$ & $32.0$ & $43.7$\\
\hline
\hline $p_s^-$ & $.18$ & $.69$ & $2.50$ & $5.11$ & $8.337$ & $16.2$ & $25.8$ & $36.8$\\
\hline $p_b^-$ & $.19$ & $.70$ & $2.51$ & $5.12$ & $8.338$ & $16.3$ & $25.9$ & $36.9$\\
\hline
\end{tabular}
\bigskip
\caption{The numerical results for blow up and scattering thresholds
for the off-centered Gaussian initial data with the phase $\gamma =
\pm\frac12$: blow up if $p \geq p_b^\pm$ and global
existence/scattering if $p \leq p_s^\pm$.}
 \label{T3:num-phase}
\end{table}

To compare all the above conditions \eqref{E3:posEphase} -
\eqref{E3:LAorangephase} with the numerical data, we graph the
dependence of $p$ on $\alpha$ in Figures \ref{F:off-gaussp} and
\ref{F:off-gaussn}. For clarity of presentation we plot $\ds
\frac{p}{\alpha^{3/2}}$ on the vertical axis.

\begin{figure}[p]
\includegraphics[scale=.78]{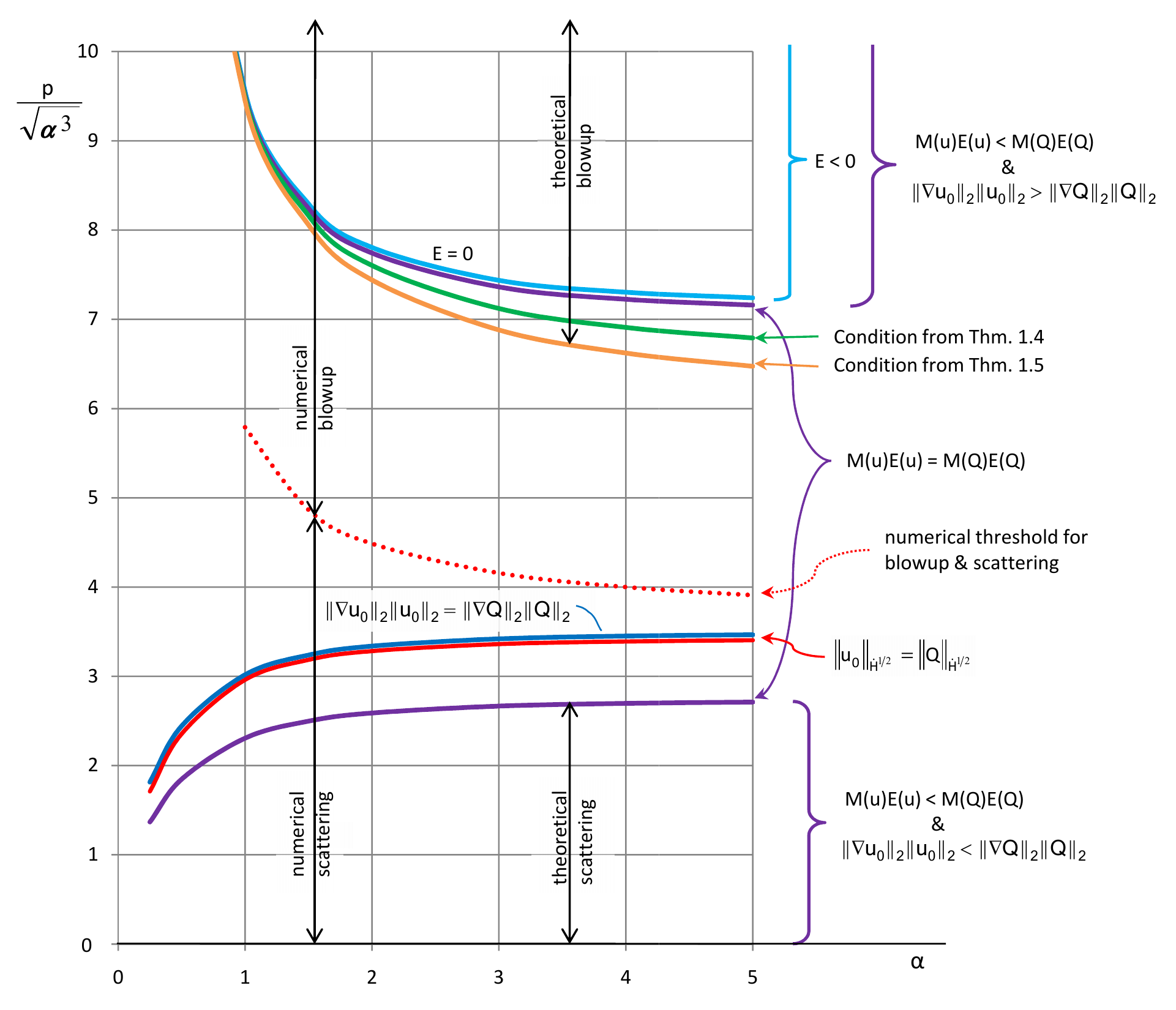}  %
\caption{Global behavior of the solutions for the off-centered
Gaussian initial data with positive quadratic phase. The curve
denoted by ``Condition from Thm. \ref{T:Lushnikov}" comes from the
largest positive root of the equation in \eqref{E3:Lsimple}, namely,
values $p_t/\sqrt[3] \alpha^2$ in Table \ref{T3:Lsimple}. Similarly,
the curve denoted by ``Condition from Thm.
\ref{T:Lushnikov-adapted}" comes from the largest positive root of
the equation in \eqref{E3:LAorangephase}, namely, values
$p_t/\sqrt[3] \alpha^2$ in Table \ref{T3:LAsimple}. The dotted line
``numerical threshold" is plotted from the values $p_s^+/\sqrt[3]
\alpha^2$ and $p_b^+/\sqrt[3] \alpha^2$ (indistinguishable) from
Table \ref{T3:num-phase}. The curve ``theoretical scattering" is
provided by Thm. \ref{T:DHR}, see \eqref{E3:MEphase} and Table
\ref{T3:MEphase}.}
 \label{F:off-gaussp}
\end{figure}

\begin{figure}[p]
\includegraphics[scale=.78]{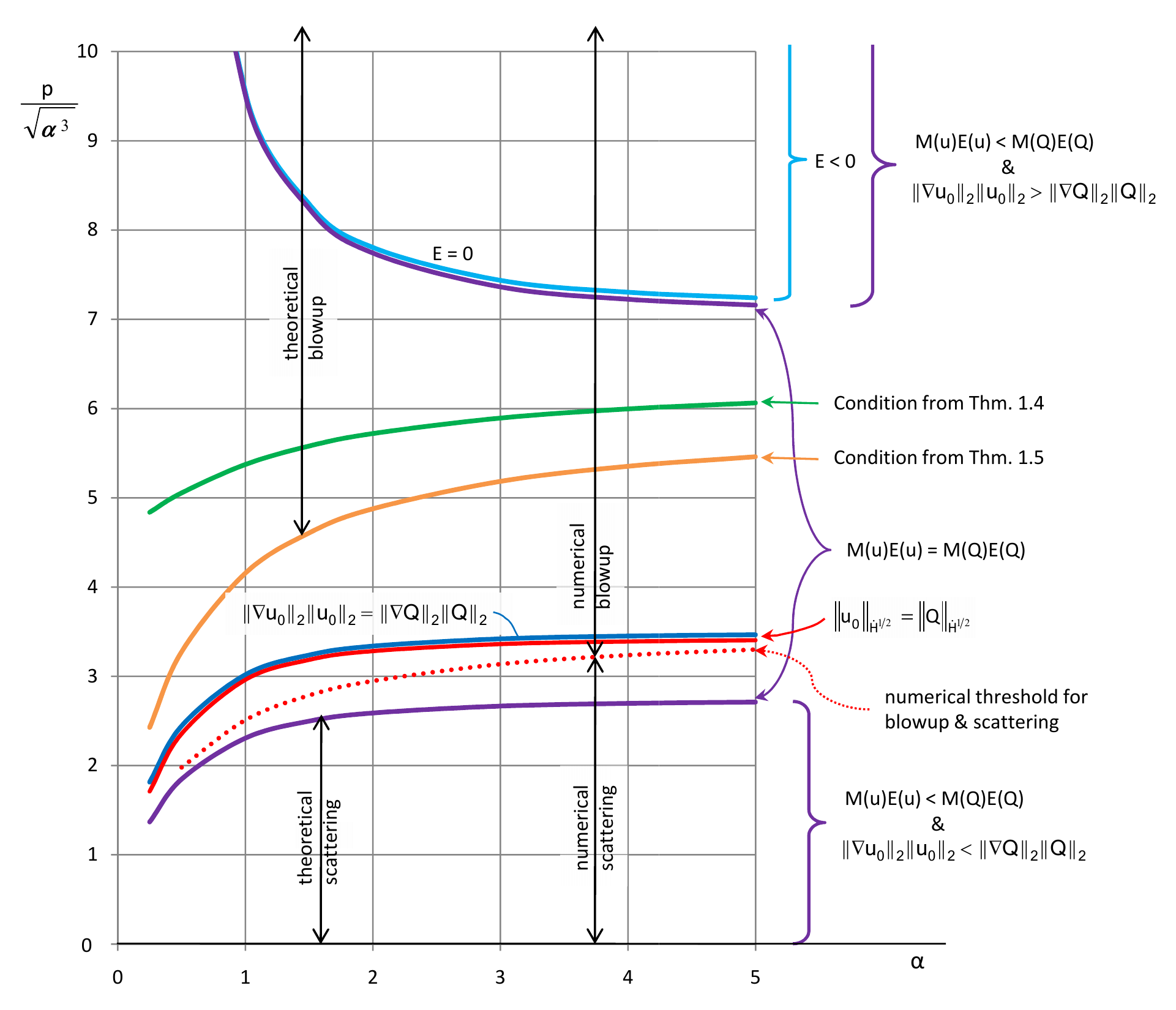}  %
\caption{Global behavior of the solutions for the off-centered
Gaussian initial data with negative quadratic phase. The curve
denoted by ``Condition from Thm. \ref{T:Lushnikov}" comes from the
smallest positive root of the equation in \eqref{E3:Lsimple},
namely, values $p_b/\sqrt[3] \alpha^2$ in Table \ref{T3:Lsimple}.
Similarly, the curve denoted by ``Condition from Thm.
\ref{T:Lushnikov-adapted}" comes from the smallest positive root of
the equation in \eqref{E3:LAorangephase}, namely, values
$p_b/\sqrt[3] \alpha^2$ in Table \ref{T3:LAsimple}. The dotted line
``numerical threshold" is plotted from the values $p_s^-/\sqrt[3]
\alpha^2$ and $p_b^-/\sqrt[3] \alpha^2$ (indistinguishable) from
Table \ref{T3:num-phase}. The curve ``theoretical scattering" is
provided by Thm. \ref{T:DHR}, see \eqref{E3:MEphase} and Table
\ref{T3:MEphase}.}
 \label{F:off-gaussn}
\end{figure}

\afterpage{\clearpage}

\subsection{Conclusions}
The above computations show
\begin{enumerate}
\item
Consistency with Conjecture 3: if $u_0$ is \emph{real}, then
$\|u_0\|_{\dot H^{1/2}}< \|Q\|_{\dot H^{1/2}}$ implies $u(t)$
scatters.

\item
Consistency with Conjecture 4:  Numerical simulations show
that the blow up threshold line is higher than the line
$\|u_0\|_{\dot H^{1/2}}= \|Q\|_{\dot H^{1/2}}$, although this profile is
not monotonic.

\item
The condition ``$\|u_0\|_{L^2}\|\nabla u_0\|_{L^2}<
\|Q\|_{L^2}\|\nabla Q\|_{L^2}$ {\it implies scattering}'' is not
valid (unless $M[u]E[u]<M[Q]E[Q]$ as in Theorem \ref{T:DHR}), for
example, when there is a negative initial phase present.

\item
In all three cases (real data, data with positive phase and with
negative phase) Theorems \ref{T:Lushnikov} and
\ref{T:Lushnikov-adapted} provide new range on blow up than was
previously known from our Theorem \ref{T:DHR}. Furthermore, Theorem
\ref{T:Lushnikov-adapted} provides the best results.
\end{enumerate}

\section{Oscillatory Gaussian profile}
\label{S:osc-Gaussian}

Lastly we consider a Gaussian profile with oscillations (referred
from now on as `oscillatory Gaussian'), it is a sign changing
profile:
\begin{equation}
 \label{E:osc-gaussian}
u_{\beta,0}(x) = p \, \cos (\beta r) \, e^{-r^2} \, e^{i \gamma
r^2}.
\end{equation}
Here, we fix the Gaussian $e^{-r^2}$ itself and change the frequency
of oscillation $\beta$. These data generate a solution to NLS
denoted by $u_\beta$. We obtain:
\begin{align*}
M[u_\beta] & = \frac{\pi^{3/2} p^2}{4 \sqrt 2} \, m(\beta), \quad
\|u_{\beta,0}\|^2_{{L}^{2}}
= \frac{\pi^{3/2}p^2}{4 \sqrt 2} \, a(\beta, \gamma),\\
E[u_\beta] & =\frac{\pi^{3/2} p^2}{8 \sqrt 2} \left(a(\beta,
\gamma)-p^2\, b(\beta)\right),\\
V(0) & = \frac{\pi^{3/2} p^2}{16 \sqrt 2} \, v(\beta),
\quad V_t(0) = \gamma \, \frac{\pi^{3/2} p^2}{2 \sqrt 2} \, v(\beta),\\
\|u_{\beta,0}\|^2_{\dot{H}^{1/2}} & = 32 \pi^2 \, p^2 \,
\int_0^\infty R \left|\int_0^{\infty} r \, \cos(\beta r) \,
\sin(2\pi R r) \,
e^{-r^2} \, dr \right|^2 \, dR\\
& = \frac{\pi}{8} \, p^2 \, \left( \sqrt{2\pi} \, \beta \,
\mbox{erf}\left(\frac{\beta}{\sqrt 2}\right) -2
\sqrt{\pi}(\beta^2-4)e^{-\beta^2/2} \right),
\end{align*}
where
\begin{align*}
m(\beta) &= 1+ (1-\beta^2)e^{-\beta^2/2},\\
a(\beta, \gamma) &= 3(1+\gamma^2)+\beta^2+\left(3(1
+\gamma^2)-\beta^2(1+6\gamma^2) +\beta^4\gamma^2\right) e^{-\beta^2/2},\\
b(\beta) &= \frac{1}{16 \sqrt 2}\left(3 +(1-2\beta^2)e^{-\beta^2}
+ 2(2 - \beta^2) e^{-\beta^2/4}\right),\\
v(\beta) & = 3+(3-6\beta^2+\beta^4)e^{-\beta^2/2}, \\
\mbox{erf}(x) & = \frac2{\sqrt \pi} \int_0^x e^{-t^2} \, dt .
\end{align*}
We list some values of the $\dot{H}^{1/2}$ norm of $u_{\beta, 0}$ in
Table \ref{T4:H12}.
\begin{table}[h]
\begin{align*}
&\begin{tabular} [c]{|l||l|l|l|l|l|l|l|l|l|l|l|l|} \hline $\beta$ &
$0$ & $0.25$ & $0.5$ & $1.0$ & $2.0$ & $3.0$ & $4.0$ & $5.0$ & $6.0$
\\
\hline\hline $\frac12 \|u_{\beta, 0}\|^2_{\dot{H}^{1/2}}$ & $\pi$ &
$3.046$ & $2.788$ & $2.101$ & $1.879$ & $2.901$ & $3.933$
& $4.922$ & $5.906$  \\
\hline $p_{1/2}$ & $2.97$ & $3.02$ & $3.15$ & $3.63$ & $3.84$ &
$3.09$ & $2.65$ & $2.37$ & $2.17$\\
\hline
\end{tabular}\\
&\begin{tabular} [c]{|l||l|l|l|l|l|l|l|l|l|}\hline $\beta$ & $7.0$ &
$8.0$ & $9.0$ & $10.0$& $15.0$ & $20.0$ & $25.0$\\
\hline\hline $\frac12 \|u_{\beta, 0}\|^2_{\dot{H}^{1/2}}$ & $6.890$
& $7.875$ & $8.859$ & $9.844$ & $14.765$ & $19.687$ & $24.609$\\
\hline $p_{1/2}$ & $2.01$ & $1.88$ & $1.78$ & $1.68$ & $1.37$ &
$1.19$ & $1.06$\\
\hline
\end{tabular}
\end{align*}
\caption{The $\dot{H}^{1/2}$ norm of $u_{\beta,0}$ and values of $p$
for which $\ds \|u_{\beta, 0}\|^2_{\dot{H}^{1/2}} = \| Q
\|^2_{\dot{H}^{1/2}}$. }
 \label{T4:H12}
\end{table}

\subsection{Real oscillatory Gaussian}
First, we consider the real oscillatory Gaussian data ($\gamma =
0$). We have
\begin{itemize}
\item
$E[u]>0$ if
\begin{equation}
 \label{E4:posE}
p < \left(\frac{a(\beta, 0)}{b(\beta)} \right)^{1/2};
\end{equation}

\item
the condition on the mass and gradient $\ds \|u_{\beta} \|^2_{L^2}
\|\nabla u_{\beta,0} \|^2_{L^2} < \|Q\|^2_{L^2}\|\nabla Q\|^2_{L^2}$
implies
\begin{equation}
 \label{E4:MG}
p < 2 \,\|Q\|_2 \, \left( \frac{6 }{\pi^{3} \, m(\beta) \, a(\beta,
0)} \right)^{1/4};
\end{equation}

\item
the mass-energy condition $ M[u]E[u] < M[Q]E[Q]$ gives
\begin{equation}
 \label{E4:ME}
{\pi^3 m(\beta) } \left(a(\beta, 0)-p^2 \,b(\beta)\right) p^4 -
32\|Q\|_2^4 < 0, \quad \mbox{or} \quad p < p_1 \quad \text{and}
\quad p > p_2,
\end{equation}
where $p_1$ and $p_2$, the real positive zeros of the cubic
polynomial (in $p^2$) above, given in Table \ref{T4:ME}.
\begin{table}[h]
\begin{align*}
&\begin{tabular} [c]{|l||l|l|l|l|l|l|l|l|l|l|l|} \hline $\beta$ &
$0$ & $0.25$ &
$0.5$ & $1.0$ & $1.45$ & $2.0$ & $2.5$ & $2.77$ & $3.0$ & $3.61$ & $4.0$\\
\hline\hline $p_1$ & $2.72$ & $2.76$ & $2.88$ & $3.27$ & $3.48$
& $3.16$ & $2.72$ & $2.55$ & $2.43$ & $2.22$ & $2.12$\\
\hline $p_2$ & $3.81$ & $3.87$ & $4.06$ & $5.00$ & $6.84$ &
$10.49$ & $13.11$ & $13.42$ & $13.31$ & $12.97$ & $13.14$\\
\hline
\end{tabular}\\
&\begin{tabular} [c]{|l||l|l|l|l|l|l|l|l|l|}\hline $\beta$  & $5.0$
& $6.0$ & $7.0$ &
$8.0$ & $9.0$ & $10.0$ & $15.0$ & $20.0$ & $25.0$\\
\hline\hline $p_1$  & $1.91$ & $1.76$ & $1.64$ & $1.53$
& $1.45$ & $1.38$ & $1.13$ & $0.98$ & $0.88$\\
\hline $p_2$  & $14.75$ & $17.17$ & $19.81$ & $22.48$
& $25.17$ & $27.87$ & $41.47$ & $55.13$ & $68.82$\\
\hline
\end{tabular}
\end{align*}
\caption{The values of $p$ in the mass-energy threshold for the real
oscillatory Gaussian.}
 \label{T4:ME}
\end{table}

\item
the values of $p$, denoted by $p_{1/2}$, for which
$\|u_{\beta, 0}\|^2_{\dot{H}^{1/2}} = \| Q \|^2_{\dot{H}^{1/2}}$,
are given in Table \ref{T4:H12}.

\item
(Theorem \ref{T:Lushnikov}) the condition \eqref{E:Lreal} is
\begin{equation}
 \label{E4:green}
p \geq {b(\beta)}^{-1/2}\left(a(\beta,0) - \frac{3 m(\beta)^2}{
v(\beta)}\right)^{-1/2} \, ;
\end{equation}

\item
(Theorem \ref{T:Lushnikov-adapted}) the condition \eqref{E:LAreal}
is
\begin{equation}
 \label{E4:orange}
p > 2 \sqrt 2 \, C^{7/2} \frac{a(\beta,0)^{1/2}
v(\beta)^{3/4}}{(\pi^{3/2} m(\beta)^{7/2} + 8 C^7 \,v(\beta)^{3/2}\,
b(\beta) )^{1/2}} \,;
\end{equation}

\item
Numerical simulations: the results for the (real) oscillatory
Gaussian initial data are in Table \ref{T4:num}. For $p \geq p_b$
the blow up was observed, for $p \leq p_s$ the solution dispersed
over time.
\end{itemize}
\begin{table}[h]
\begin{align*}
&\begin{tabular} [c]{|l||l|l|l|l|l|l|l|l|l|} \hline $\beta$
& $0$ & $0.25$ & $0.5$ & $1.0$ & $2.0$ & $3.0$ & $4.0$ & $5.0$ & $6.0$\\
\hline
\hline $p_s$ & $2.93$ & $2.98$ & $3.11$ & $3.601$ & $4.583$ & $4.13$ & $3.29$ & $2.92$ & $2.72$ \\
\hline $p_b$ & $2.94$ & $2.99$ & $3.12$ & $3.605$ & $4.585$ & $4.14$ & $3.30$ & $2.93$ & $2.73$ \\
\hline
\end{tabular}\\
&\begin{tabular} [c]{|l||l|l|l|l|l|l|l|}\hline $\beta$ & $7.0$ &
$8.0$ & $9.0$ & $10.0$& $15.0$ & $20.0$ & $25.0$\\
\hline
\hline $p_s$ & $2.61$ & $2.53$ & $2.48$ & $2.455$ & $2.42$ & $2.49$ & $2.52$\\
\hline $p_b$ & $2.62$ & $2.54$ & $2.49$ & $2.456$ & $2.43$ & $2.50$ & $2.53$\\
\hline
\end{tabular}
\end{align*}
\caption{Numerical simulations for the (real) oscillatory Gaussian.
Here, the blow up was observed for $p > p_b$ and global existence
for $p < p_s$.}
 \label{T4:num}
\end{table}

To compare all conditions \eqref{E4:posE} - \eqref{E4:orange} with
numerical data, we graph the dependence of $p$ on $\beta$ in Figure
\ref{F:oscgauss}. We plot $p/\sqrt {1+\beta^2}$ on the vertical
axis.
\begin{figure}[p]
\includegraphics[scale=.76]{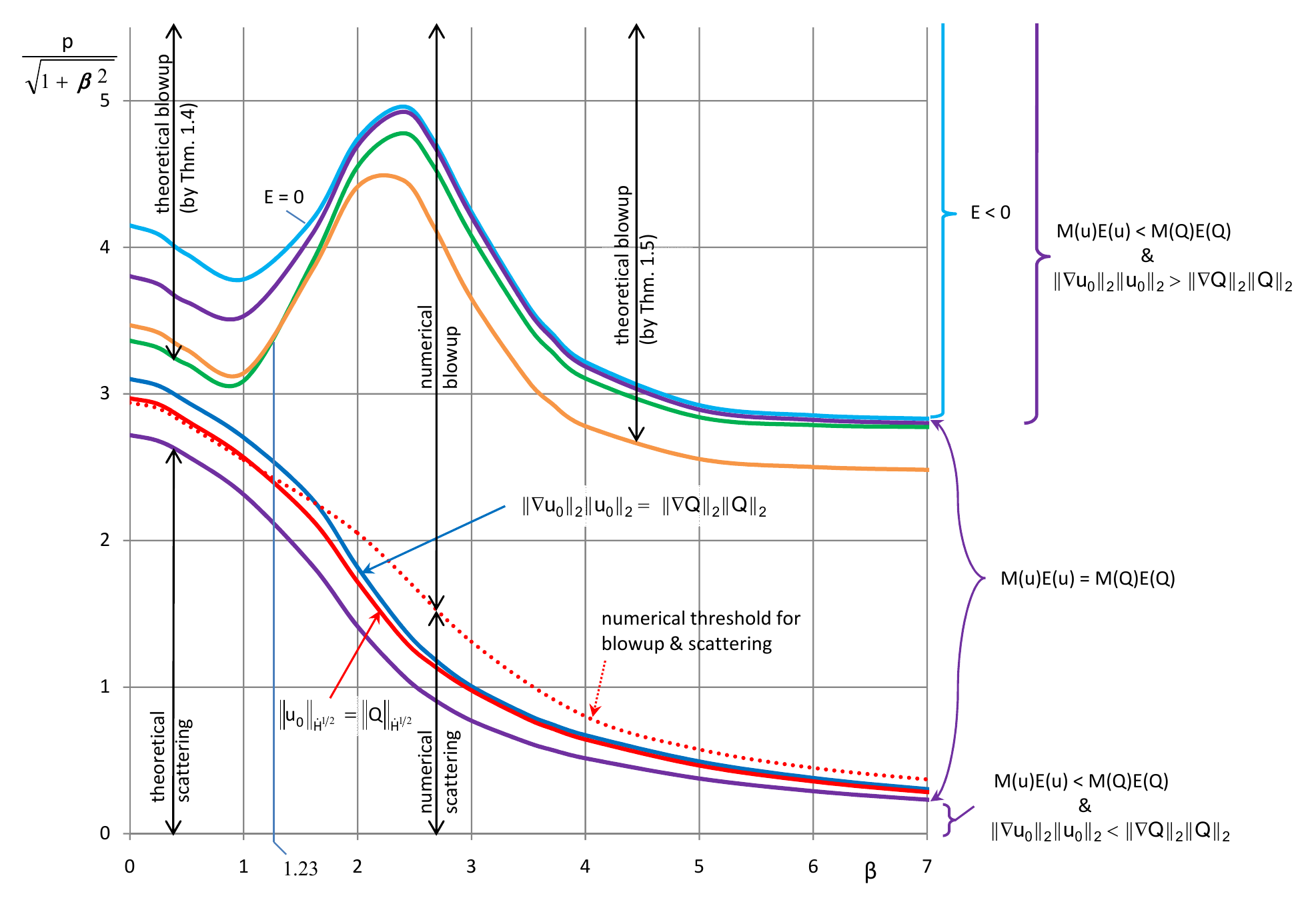}  %
\caption{Global behavior of the solutions to \eqref{E:NLSa} with the
real oscillatory Gaussian initial data \ref{E:osc-gaussian}, with
the rescaled vertical axis $p/\sqrt{1+\beta^2}$. The blow up
threshold curve denoted ``by Thm. \ref{T:Lushnikov}" is given by
\eqref{E4:green} and the blow up threshold curve denoted ``by Thm.
\ref{T:Lushnikov-adapted}" is given by \eqref{E4:orange}. Observe
that these curves intersect at $\beta \approx 1.23$. Therefore, for
small oscillations ($\beta < 1.23$) Theorem \ref{T:Lushnikov}
provides the best range for blow up, correspondingly, for large
oscillations ($\beta > 1.23$) Theorem \ref{T:Lushnikov-adapted}
gives a better range. The curve ``theoretical scattering" is given
by Thm. \ref{T:DHR}, see \eqref{E4:ME} and values $p_1$ in Table
\ref{T4:ME}. Observe that the numerical threshold (dotted curve,
values are given in Table \ref{T4:num}) for small oscillations
coincide with the curve $\|u_0\|_{\dot{H}^{1/2}} =
\|Q\|_{\dot{H}^{1/2}}$, see Table \ref{T4:H12}, and for large
oscillations these curves separate.}
 \label{F:oscgauss}
\end{figure}

\subsection{Oscillatory Gaussian with quadratic phase}
When $\gamma \neq 0$ we have
\begin{itemize}
\item
$E[u]>0$ if
\begin{equation}
 \label{E4:posEphase}
p < \left(\frac{a(\beta, \gamma)}{b(\beta)} \right)^{1/2};
\end{equation}

\item
the condition on the mass and gradient $\ds \|u_{\beta} \|^2_{L^2}
\|\nabla u_{\beta,0} \|^2_{L^2} < \|Q\|^2_{L^2}\|\nabla Q\|^2_{L^2}$
implies
\begin{equation}
 \label{E4:MGphase}
p < 2 \,\|Q\|_2 \, \left( \frac{6 }{\pi^{3} \, m(\beta) \, a(\beta,
\gamma)} \right)^{1/4};
\end{equation}

\item
the mass-energy condition $ M[u]E[u] < M[Q]E[Q]$ gives
\begin{equation}
 \label{E4:MEphase}
{\pi^3 m(\beta)} \left(a(\beta, \gamma)-p^2 b(\beta)\right) p^4 -
32\|Q\|_2^4 < 0 \quad \mbox{or} \quad p < p_1^{\gamma} \quad
\text{and} \quad p
> p_2^\gamma,
\end{equation}
where $p_1^\gamma$ and $p_2^\gamma$, the real positive zeros of the
polynomial above with $\gamma = \pm\frac12$, given in Table
\ref{T4:MEphase}.
\begin{table}[h]
\begin{align*}
&\begin{tabular} [c]{|l||l|l|l|l|l|l|l|l|l|l|l|}
\hline $\beta$ & $0$ & $0.25$ & $0.5$ & $1.0$ & $1.6$ & $2.0$ & $2.4$ & $2.66$ & $3.0$ & $4.0$\\
\hline \hline $p_1^\gamma$ & $2.42$ & $2.47$ & $2.62$ & $3.13$ &
$3.39$ & $3.09$ & $2.74$ & $2.56$ & $2.39$ & $2.10$\\
\hline $p_2^\gamma$ & $4.46$ & $4.50$ & $4.61$ & $5.29$ & $7.97$ &
$10.94$ & $13.37$ & $13.92$ & $13. 77$ & $13.40$\\
\hline
\end{tabular}\\
&\begin{tabular} [c]{|l||l|l|l|l|l|l|l|l|l|}\hline $\beta$  & $5.0$
& $6.0$ & $7.0$ & $8.0$ & $9.0$ & $10.0$ & $15.0$ & $20.0$ & $25.0$\\
\hline \hline $p_1^\gamma$ & $1.90$ & $1.75$ & $1.63$ & $1.53$ &
$1.45$ & $1.38$ & $1.13$ & $0.98$ & $0.88$\\
\hline $p_2^\gamma$ & $14.95$ & $17.34$ & $19.95$ & $22.61$ &
$25.28$ & $27.97$ & $41.54$ & $55.18$ & $68.86$\\
\hline
\end{tabular}
\end{align*}
\caption{The values of $p$ for the mass-energy threshold for the
oscillatory Gaussian with the phase $\gamma = \pm \frac12$.}
 \label{T4:MEphase}
\end{table}

\item
the values of $p$, denoted by $p_{1/2}^\gamma$, for which
$\|u_{\beta, 0}\|^2_{\dot{H}^{1/2}} = \| Q \|^2_{\dot{H}^{1/2}}$,
are given in Table \ref{T4:H12-phase}.
\begin{table}[h]
\begin{align*}
&\begin{tabular} [c]{|l||l|l|l|l|l|l|l|l|l|l|l|l|} \hline $\beta$
&$0$ & $0.25$ & $0.5$ & $1.0$ & $2.0$ & $3.0$ & $4.0$ & $5.0$ &
$6.0$\\
\hline\hline $\frac12 \|u_{0,\beta}\|^2_{\dot{H}^{1/2}}$ &
$\frac{\sqrt 5}2\pi$ & $3.384$ & $3.042$ & $2.165$ & $1.933$ &
$2.959$ & $3.942$ & $4.922$ & $5.906$  \\
\hline $p_{1/2}^\gamma$ & $2.81$ & $2.86$ & $3.02$ & $3.58$
& $3.79$ & $3.06$ & $2.65$ & $2.37$ & $2.17$\\
\hline
\end{tabular}\\
&\begin{tabular} [c]{|l||l|l|l|l|l|l|l|l|l|}\hline $\beta$ & $7.0$ &
$8.0$ & $9.0$ & $10.0$& $15.0$ & $20.0$ & $25.0$\\
\hline\hline $\frac12 \|u_{0,\beta}\|^2_{\dot{H}^{1/2}}$ & $6.890$ &
$7.875$ & $8.859$ & $9.844$ & $14.765$ & $19.687$ & $24.609$\\
\hline $p_{1/2}^\gamma$ & $2.01$ & $1.88$ & $1.78$ & $1.68$ & $1.37$
& $1.19$ & $1.06$\\
\hline
\end{tabular}
\end{align*}
\caption{The values of $\|u_{\beta, 0}\|^2_{\dot{H}^{1/2}} / p^2$
for the oscillatory Gaussian initial data with phase $\gamma =
\pm\frac12$ and values of $p^\gamma_{1/2}$ for which the
$\dot{H}^{1/2}$ norm threshold holds.}
  \label{T4:H12-phase}
\end{table}

\item
(Theorem \ref{T:Lushnikov}) the condition \eqref{E:Lreal} is
\begin{equation}
 \label{E4:Lphase}
p \geq {b(\beta)}^{-1/2}\left(a(\beta,\gamma) - \frac{3
m(\beta)^2}{v(\beta)}\right)^{-1/2},
\end{equation}
and the condition \eqref{E:L+simple} is
\begin{equation}
 \label{E4:Lgreen}
2\sqrt 3 m(\beta) + \left(\frac{v(\beta)}{3 m(\beta)^2}
\left(a(\beta, 0)-p^2\, b(\beta) \right) -3 \right) \left(a(\beta,
\gamma)-p^2 b(\beta) \right)^{1/2} v(\beta)^{1/2}
> 0,
\end{equation}
the real positive zeros of which are in Table \ref{T4:L-phase}.
\begin{table}[h]
\begin{tabular}[c]
{|l|l|l|l|l|l|l|l|l|l|l|}
\hline $\beta$ & $0$ & $ 0.25 $ & $ 0.5$ & $1$ & $1.6$ & $2$ & $2.4$ & $2.66$ & $3$ & $4$\\
\hline $p$ & $2.68$ & $2.74$ & $2.93$ & $3.83$ & $7.02$ & $9.86$  &
$12.05$ & $12.58$ & $12.53$ & $12.55$\\
\hline $p$ & $4.39$ & $4.42$ & $4.52$ & $5.07$ & $7.88$ & $10.89$  &
$13.30$ & $13.83$ & $13.67$ & $13.30$\\
\hline
\end{tabular}

\medskip

\begin{tabular}[c]
{|l|l|l|l|l|l|l|l|l|l|l|}
\hline $\beta$ & $5$ & $6$ & $7$ & $8$ & $9$ & $10$ & $15$ & $20$ & $25$ \\
\hline $p$ & $14.29$ & $16.79$ & $19.47$ & $22.19$ & $24.91$ &
$27.64$ & $41.31$ & $55.01$ & $68.73$\\
\hline $p$ & $14.86$ & $17.26$ & $19.88$ & $22.55$ & $25.23$ &
$27.93$ & $41.51$ & $55.16$ & $68.85$\\
\hline
\end{tabular}

\bigskip

\caption{Zeros of the function in the inequality \eqref{E4:Lgreen}
Theorem \ref{T:Lushnikov} .}
 \label{T4:L-phase}
\end{table}

\item
(Theorem \ref{T:Lushnikov-adapted}) the condition \eqref{E:LAreal}
is
\begin{equation}
 \label{E4:LAorange}
p >
\frac{2^{3/4}C^{7/2}v^{3/4}a(\beta,\gamma)^{1/2}}{(\pi^{3/2}m(\beta)^{1/2}
+ 2^{3/2} C^7 v(\beta)^{3/2} b(\beta))^{1/2}},
\end{equation}
and the condition \eqref{E:LA+simple} is
\begin{equation}
 \label{E4:LA+phase}
\begin{aligned}
\left(\frac{\pi^{3/2}m(\beta)^{3/2}}{2^{1/2} C^7 v(\beta)^{1/2}
}-\frac{v(\beta)b(\beta)}{m(\beta)^2} \right)p^2 +
\frac{v(\beta)a(\beta, 0)}{m(\beta)^2} \qquad & \\
- \frac{27 \pi^3 \,p^{4}}{8
\,C^{14}} \, m(\beta) \left(a(\beta, \gamma) - p^2 b(\beta)\right) 
& >0,
\end{aligned}
\end{equation}
the positive real zeros of which are listed in Table
\ref{T4:LAphase}.
\begin{table}[h]
\begin{center}
\begin{tabular}
[c]{|l|l|l|l|l|l|l|l|l|l|l|}
\hline $\beta$ & $0$ & $0.25$ & $0.5$ & $1$ & $1.6$ & $2$  & $2.4$ & $2.66$ & $3$  & $4$\\
\hline $p_b$ & $2.81$ & $2.87$ & $3.06$ & $3.90$ & $6.71$ & $9.10$ &
$10.45$ & $10.57$ & $10.35$ & $10.56$\\
\hline $p_t$ & $4.46$ & $4.49$ & $4.60$ & $5.14$ & $7.91$ & $10.86$
&
$12.05$ & $13.37$ & $12.01$ & $12.52$\\
\hline
\end{tabular}

\medskip

\begin{tabular}
[c]{|l|l|l|l|l|l|l|l|l|l|l|}
\hline $\beta$ & $5$   & $6$  & $7$    & $8$   & $9$   & $10$ & $15$ & $20$ & $25$\\
\hline $p_b$ & $12.22$ & $14.41$ & $16.74$ & $19.11$ & $21.49$ &
$23.88$ & $35.92$ & $48.03$ & $60.15$\\
\hline $p_t$ & $13.96$ & $16.11$ & $18.44$ & $20.80$ & $23.18$ &
$25.58$ & $37.61$ & $49.72$ & $61.85$\\
\hline
\end{tabular}
\end{center}

\bigskip

\caption{The positive real zeros of the polynomial in
\eqref{E4:LA+phase}, by Theorem \ref{T:Lushnikov-adapted}.}
 \label{T4:LAphase}
\end{table}

\item
Numerical simulations: the results for the oscillatory Gaussian
initial data with the phase $\gamma = \pm \frac12$ are in Table
\ref{T4:num-phase}. For $p \geq p_b$ the blow up was observed, for
$p \leq p_s$ the solution dispersed over time.
\end{itemize}
\begin{table}[h]
\begin{align*}
&\begin{tabular} [c]{|l||l|l|l|l|l|l|l|l|}
\hline $\beta$ & $0$ & $0.25$ & $0.5$ & $1.0$ & $2.0$ & $3.0$ & $4.0$ & $5.0$ \\
\hline
\hline $p_s^+$ & $3.56$ & $3.60$ & $3.70$ & $4.07$ & $4.94$  & $4.75$ & $3.70$ & $3.19$ \\
\hline $p_b^+$ & $3.57$ & $3.61$ & $3.71$ & $4.08$ & $4.941$ & $4.76$ & $3.71$ & $3.20$ \\
\hline $p_s^-$ & $2.42$ & $2.47$ & $2.63$ & $3.20$ & $4.263$ & $3.64$ & $2.98$ & $2.70$ \\
\hline $p_b^-$ & $2.43$ & $2.48$ & $2.64$ & $3.21$ & $4.26$  & $3.65$ & $2.99$ & $2.71$ \\
\hline
\end{tabular}\\
&\begin{tabular} [c]{|l||l|l|l|l|l|l|l|l|}\hline $\beta$ & $6.0$ &
$7.0$ &
$8.0$ & $9.0$ & $10.0$& $15.0$ & $20.0$ & $25.0$\\
\hline
\hline $p_s^+$ & $2.92$ & $2.76$ & $2.66$ & $2.59$ & $2.55$ & $2.48$ & $2.53$ & $2.56$ \\
\hline $p_b^+$ & $2.93$ & $2.77$ & $2.67$ & $2.60$ & $2.56$ & $2.49$ & $2.54$ & $2.57$ \\
\hline $p_s^-$ & $2.56$ & $2.47$ & $2.42$ & $2.38$ & $2.377$ & $2.37$ & $2.44$ & $2.48$ \\
\hline $p_b^-$ & $2.57$ & $2.48$ & $2.43$ & $2.39$ & $2.378$ & $2.38$ & $2.45$ & $2.49$ \\
\hline
\end{tabular}
\end{align*}
\caption{Threshold for blow up and scattering in numerical
simulations for the oscillatory Gaussian with the phase $\gamma =
\pm\frac12$.}
  \label{T4:num-phase}
\end{table}

To compare all the above conditions \eqref{E4:posEphase} -
\eqref{E4:LA+phase} with the numerical data, we graph the dependence
of $p$ on $\alpha$ in Figures \ref{F:oscgaussn} and
\ref{F:oscgaussp}. For clarity of presentation we plot $\ds
\frac{p}{\sqrt{1+\beta^2}}$ on the vertical axis.

\begin{figure}[p]
\includegraphics[scale=.76]{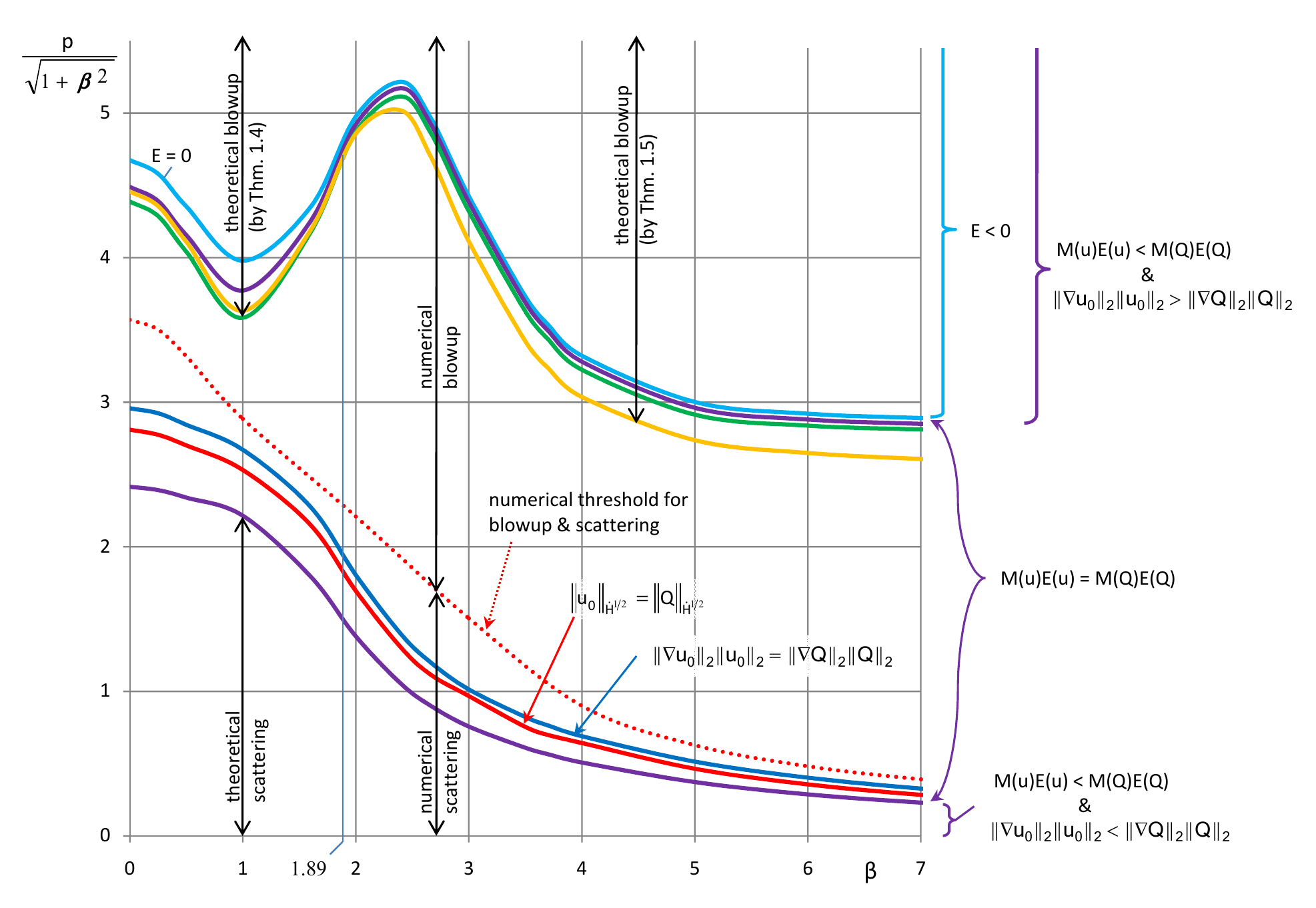}  %
\caption{Global behavior of the solutions to \eqref{E:NLSa} with the
oscillatory Gaussian initial data with positive quadratic phase
$\gamma = +\frac12$, see \eqref{E:osc-gaussian}. The vertical axis
is rescaled $p/\sqrt{1+\beta^2}$ to compare with the real case. The
blow up threshold curve denoted ``by Thm. \ref{T:Lushnikov}" is
given by \eqref{E4:Lgreen}, see values $p_t$ in Table
\ref{T4:L-phase}; the blow up threshold curve denoted ``by Thm.
\ref{T:Lushnikov-adapted}" is given by \eqref{E4:LA+phase}, see
values $p_t$ in Table \ref{T4:LAphase}. Observe that these curves
intersect at $\beta \approx 1.89$. Therefore, for small oscillations
($\beta < 1.89$) Theorem \ref{T:Lushnikov} provides the best range
for blow up, correspondingly, for large oscillations ($\beta >
1.89$) Theorem \ref{T:Lushnikov-adapted} gives a better range. The
curve ``theoretical scattering" is given by Thm. \ref{T:DHR}, see
\eqref{E4:MEphase} and values $p_1^\gamma$. The numerical threshold
(dotted curve) is given in Table \ref{T4:num-phase}, values $p_s^+$
and $p_b^+$. All $p$ values in this graph are normalized by
$\sqrt{1+\beta^2}$.}
 \label{F:oscgaussp}
\end{figure}

\begin{figure}[p]
\includegraphics[scale=.76]{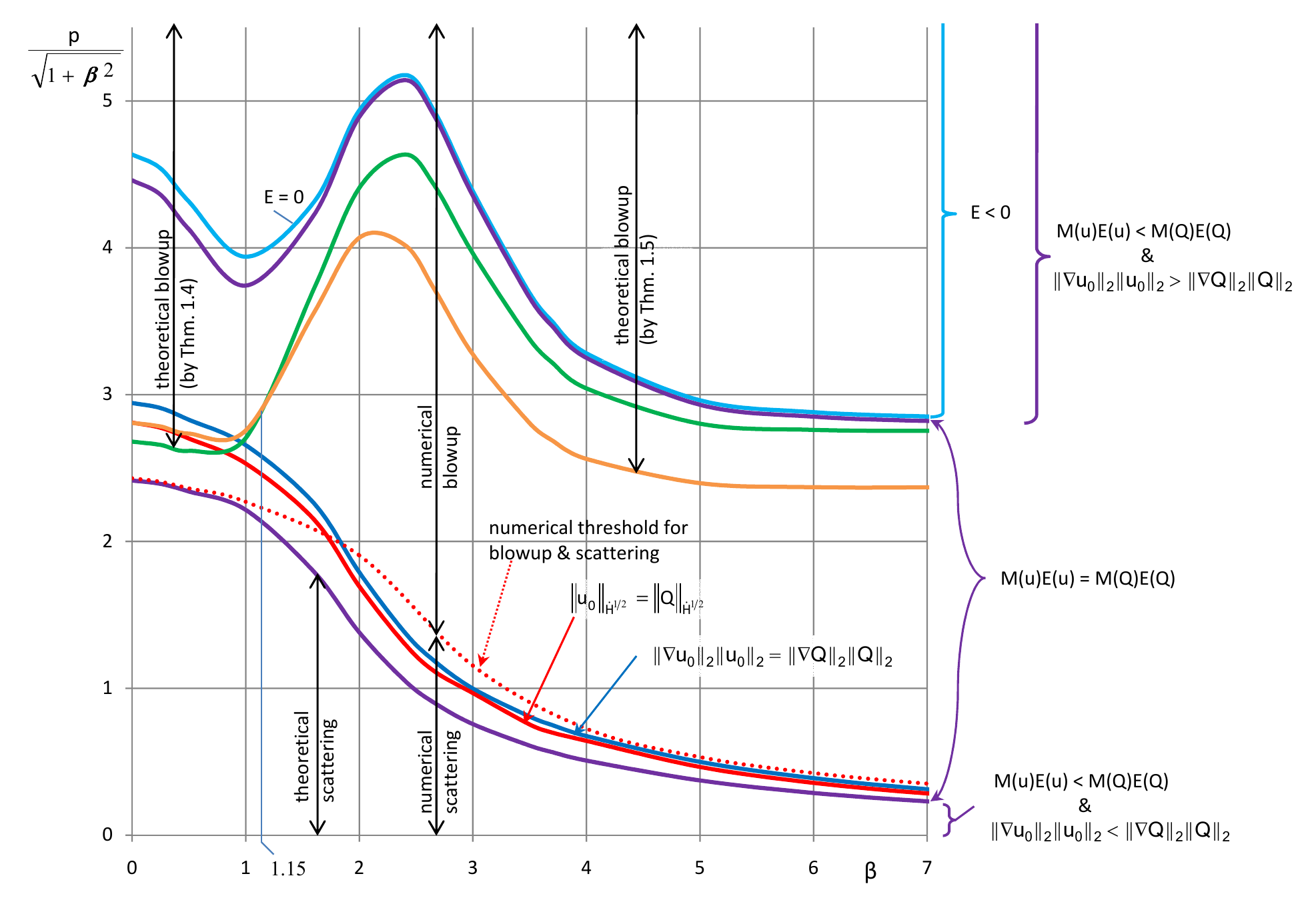}  %
\caption{Global behavior of the solutions to \eqref{E:NLSa} with the
oscillatory Gaussian initial data with negative quadratic phase
$\gamma = -\frac12$, see \eqref{E:osc-gaussian}. The vertical axis
is rescaled $p/\sqrt{1+\beta^2}$ to compare with the real case. The
blow up threshold curve denoted ``by Thm. \ref{T:Lushnikov}" is
given by the complement of \eqref{E4:Lgreen}, see values $p_b$ in
Table \ref{T4:L-phase}; the blow up threshold curve denoted ``by
Thm. \ref{T:Lushnikov-adapted}" is given by the complement of
\eqref{E4:LA+phase}, see values $p_b$ in Table \ref{T4:LAphase}.
Observe that these curves intersect at $\beta \approx 1.15$.
Therefore, for small oscillations ($\beta < 1.15$) Theorem
\ref{T:Lushnikov} provides the best range for blow up,
correspondingly, for large oscillations ($\beta > 1.15$) Theorem
\ref{T:Lushnikov-adapted} gives a better range. The curve
``theoretical scattering" is the same as in Figure \ref{F:oscgaussp}
and is given by Thm. \ref{T:DHR}, see \eqref{E4:MEphase} and values
$p_1^\gamma$. The numerical threshold (dotted curve) is given in
Table \ref{T4:num-phase}, values $p_s^-$ and $p_b^-$. All $p$ values
in this graph are normalized by $\sqrt{1+\beta^2}$. Note that for
small oscillations the numerical threshold dotted curve coincides
with the ``theoretical scattering" curve. }
 \label{F:oscgaussn}
\end{figure}

\subsection{Conclusions}
The above computations show
\begin{enumerate}
\item
Consistency with Conjecture 3: if $u_0$ is \emph{real}, then
$\|u_0\|_{\dot H^{1/2}}< \|Q\|_{\dot H^{1/2}}$ implies $u(t)$
scatters.

\item
Consistency with Conjecture 4: observe that the real oscillatory
Gaussian initial data $u_0$ is a radial profile that is not
\emph{monotonically decreasing}, thus, the condition $\|u_0\|_{\dot
H^{1/2}}> \|Q\|_{\dot H^{1/2}}$ does not necessarily imply that
$u(t)$ blows-up in finite time.

\item
The condition ``$\|u_0\|_{L^2}\|\nabla u_0\|_{L^2}<
\|Q\|_{L^2}\|\nabla Q\|_{L^2}$ {\it implies scattering}'' is not
valid (unless $M[u]E[u]<M[Q]E[Q]$ as in Theorem \ref{T:DHR}) even
for the real oscillatory Gaussian initial data.

\item
In all three cases (real data, data with positive phase and with
negative phase) Theorems \ref{T:Lushnikov} and
\ref{T:Lushnikov-adapted} provide new range on blow up than was
previously known from our Theorem \ref{T:DHR}. For small
oscillations (small $\beta$) Theorem \ref{T:Lushnikov} provides the
best range for blow up and for large oscillations Theorem
\ref{T:Lushnikov-adapted} provides a better result.
\end{enumerate}

\clearpage

\end{document}